\documentclass[12pt, reqno]{amsart}
\voffset=-.5cm
\textwidth=15cm
\textheight=23cm
\oddsidemargin=1cm
\evensidemargin=1cm
\usepackage{amsmath}
\usepackage{amsxtra}
\usepackage{amscd}
\usepackage{amsthm}
\usepackage{amsfonts}
\usepackage{amssymb}
\usepackage {pstricks}
\usepackage{pstricks,pst-node}
\usepackage{tikz}

\usepackage{verbatim}



\def\1{\hbox{1\kern-.35em\hbox{1}}}


\def\abs#1{{\rm abs}(#1)}
\def\sign#1{{\rm sign}(#1)}

\newtheorem{claim}{\indent Claim}
\newtheorem{theorem}{Theorem}[section]
\newtheorem*{theorem*}{Theorem}
\newtheorem{lemma}[theorem]{Lemma}
\newtheorem{proposition}[theorem]{Proposition}
\newtheorem*{proposition*}{Proposition}

\newtheorem{definition}[theorem]{Definition}
\newtheorem{notation}[theorem]{Notation}
\newtheorem{remark}[theorem]{Remark}

\newtheorem{example}[theorem]{Example}

\newtheorem{fact}{Fact}
\newtheorem{Ofact}{\phantom{Fact}}

\numberwithin{equation}{section}

\newcommand{\bea}{\begin{eqnarray}}
\newcommand{\eea}{\end{eqnarray}}
\newcommand{\be}{\begin{eqnarray*}}
\newcommand{\ee}{\end{eqnarray*}}


\newcommand{\Z}{{\mathbb Z}}
\newcommand{\Q}{{\mathbb Q}}

\newcommand{\C}{{\mathbb C}}

\newcommand{\fb}{{\mathfrak b}}
\newcommand{\fg}{{\mathfrak g}}
\newcommand{\fh}{{\mathfrak h}}

\newcommand{\fl}{{\mathfrak l}}
\newcommand{\fn}{{\mathfrak n}}
\newcommand{\fp}{{\mathfrak p}}

\newcommand{\fu}{{\mathfrak u}}

\newcommand{\U}{{\rm U}}


\newcommand{\cO}{{\mathcal O}}
\newcommand{\cR}{{\mathcal R}}

\def\bigskip{\vskip6pt}
\def\medskip{\vskip6pt}

\def\a{\alpha}

\def\b{\beta}
\def\d{\delta}
\def\D{\Delta}
\def\g{\gamma}
\def\G{\Gamma}

\def\th{\varpi}
\def\th{\varphi}

\def\l{\lambda}
\def\L{\Lambda}

\def\si{\sigma}
\def\sc{\scriptstyle}
\def\ssc{\scriptscriptstyle}
\def\dis{\displaystyle}

\def\rar{\rightarrow}

\def\rrar{\rightarrow}
\def\llar{\leftarrow}

\def\atp{P_{\rm aty}}

\def\drar{\mbox{${\sc\,}\rrar\!\!\!\!\rar{\sc\,}$}}

\def\dlar{\mbox{${\sc\,}\lar\!\!\!\!\llar{\sc\,}$}}

\def\lar{\leftarrow}
\def\D{\Delta}

\def\bs{\backslash}

\def\ch{{\rm ch{\ssc\,}}}

\def\Z{\mathbb{Z}}

\def\Q{\mathbb{Q}}

\def\C{\mathbb{C}}

\def\es{\varepsilon}

\def\nex{{{}\hat{}}}
\def\pre{{{}\check{}{}}}

\def\wen#1{{^{(#1)}}}
\def\wen#1{{^{#1}}}

\def\equa#1#2{
\begin{equation}\label{#1}\mbox{$#2$}\end{equation}}
\def\equan#1#2{$$\mbox{$#2$}$$}

\numberwithin{equation}{section}

\def\cO{\mathcal{O}}

\begin{document}

\title[Generalised Jantzen filtration of Lie superalgebras]
{Generalised Jantzen filtration of Lie superalgebras II:  the exceptional cases}
\author[Yucai Su]{Yucai Su}
\address{Department of Mathematics, Tongji University, Shanghai  200092, China} \email{ycsu@tongji.edu.cn}
\author[R.B. Zhang]{R.B. Zhang}
\address{School of Mathematics and Statistics,
University of Sydney, Sydney, Australia}
\email{ruibin.zhang@sydney.edu.au}
\begin{abstract}
Let $\fg$ be an exceptional Lie superalgebra, and let $\fp$ be the
maximal parabolic subalgebra which
contains the distinguished Borel subalgebra
and has a purely even Levi subalgebra.
For any parabolic Verma module
in the parabolic category $\cO^\fp$, it is shown that
the Jantzen filtration is the unique Loewy filtration,
and the decomposition numbers of the layers of the filtration are determined by
the coefficients of inverse Kazhdan-Lusztig polynomials.
An explicit description of the submodule lattices of the parabolic Verma modules is given,
and formulae for characters and dimensions of the finite dimensional
simple modules are obtained.
\end{abstract}
\maketitle

\tableofcontents

\section{Introduction}\label{sect:intro}

In this paper we continue the investigation started in \cite{SZ3} on Jantzen type filtration
for classical Lie superalgebras \cite{K, Sch}.

The Jantzen filtration was first introduced by Jantzen \cite{J, J1} for Verma modules
over semi-simple complex Lie algebras in the BGG category $\cO$.
It soon became clear that its properties were deeply rooted in Kazhdan-Lusztig
theory. Much work was devoted to studying Jantzen filtration
(see, e.g.,  \cite{GJ, CIS, BC, I} and references therein) in the 80s,
culminating at the celebrated proof of
the Jantzen conjectures \cite{BB1, BB} by generalising geometric techniques
used in the proof \cite{BB1, BK} of the Kazhdan-Lusztig conjecture \cite{KL}.
The circle of ideas surrounding Jantzen filtration and its generalisations \cite{A1, A}
led to the development \cite{S, F, Str} (see \cite{H} for further references)
of deformation techniques capable of
reaching deeper properties of category $\cO$ which are otherwise difficult to see.

Let $\fg$ be a simple basic classical Lie superalgebra \cite{K, Sch} or $\mathfrak{gl}_{m|n}$ defined
over the field $\C$ of complex numbers.
We fix once for all the chain of subalgebras $\fh\subset\fb\subset\fp\subset\fg$,
where $\fh$ is a Cartan subalgebra, $\fb$ is the  distinguished Borel subalgebra
in the sense of \cite{K},  and $\fp=\fl\oplus\fu$ is the maximal parabolic subalgebra
with the nilradical $\fu$ and purely even Levi subalgebra $\fl$. Denote by $V(\lambda)$
the parabolic Verma module with highest weight $\lambda$ in the parabolic category
$\cO^\fp$ of $\Z_2$-graded $\fg$-modules. In \cite{SZ3},
we introduced a Jantzen type filtration
\[
V(\lambda)=V^0(\lambda)\supset V^1(\lambda) \supset V^2(\lambda)\supset \dots
\supset V^\ell(\lambda) \supset 0, \quad \text{where $V^\ell(\lambda)\ne 0$},
\]
for each parabolic Verma module $V(\lambda)$
by studying a deformation of the parabolic category $\cO^\fp$.   When $\fg$ is a type I
Lie superalgebra (consisting of $\mathfrak{gl}_{m|n}$, $A(k|l)$ and $\mathfrak{osp}_{2|2n}$),
we proved that
\begin{enumerate}
\item[(i)] the Jantzen filtration is the unique Loewy filtration of the parabolic Verma module $V(\lambda)$;
\item[(ii)] the decomposition numbers of the consecutive quotients of the Jantzen filtration
are described by the coefficients of the inverse Kazhdan-Lusztig polynomials; and
\item[(iii)] the length $\ell$ of the Jantzen filtration of $V(\lambda)$ is equal to the degree
of atypicality of
the highest weight $\lambda$.
\end{enumerate}
Recall that a filtration of a module is Loewy if its consecutive quotients are all semi-simple, and
it has the smallest length among all such filtrations.
The generalised Kazhdan-Lusztig polynomials are defined in terms of $\fu$-cohomology groups
$H^i(\fu, L(\lambda))$. The degree of atypicality of $\lambda$ is the number of
isotropic odd positive roots $\gamma_i$ such that $(\lambda+\rho,  \gamma_i)=0$ and
$(\gamma_i, \gamma_j)=0$.

In the present paper we show that if $\fg$ is any of  the exceptional Lie superalgebras
$D(2,1;a)$, $F_4$ and $G_3$, the Jantzen filtration of a parabolic Verma module
$V(\lambda)$ in $\cO^\fp$ satisfies properties (i) and (ii). In particular, parabolic Verma modules
are rigid. The precise statements of these results are given in
Theorems \ref{thm:rigid}, \ref{main-1} and
\ref{main-2}.

Property (iii) no longer holds for any exceptional Lie superalgebra.
For any $\lambda\in\fh^*$ which is integral dominant,
$V(\lambda)$ is not simple regardless of whether $\lambda$ is typical
(i.e., with the degree of atypicality being $0$) or not.
Thus property (iii) can not hold for such highest weights.
Another important fact is that category $\cO^\fp$ is not the category
of finite dimensional $\Z_2$-graded $\fg$-modules,
in sharp contrast to the type I case.

Let us list the other main results of this paper.

(1). We explicitly describe in Theorem \ref{verma-stru} the submodule lattice of
any parabolic Verma module $V(\lambda)$ in $\cO^\fp$ for all exceptional Lie superalgebras.
This is a significant result in its own right.  Here  it
enables us to prove the main results on Jantzen filtration.
The method used to prove Theorem \ref{verma-stru} is a generalisation of that in \cite{SZ4},
where we worked out the structure of the parabolic Verma modules
for $\mathfrak{osp}_{k|2}$. It involves finding the primitive vectors
in $V(\lambda)$.

(2). We compute the $\fu$-cohomology groups
with coefficients in any simple $\fg$-module with the help of Theorem \ref{verma-stru}.
The result is given in Theorem \ref{main-3}
(where we actually give the $\fu^-$-homology groups, but see Remark \ref{rem:HH}).
This result provides crucial information
needed for proving property (ii) of the Jantzen filtration of $V(\lambda)$.

(3). We determine the character and dimension
of any finite dimensional simple $\fg$-module by using Theorem \ref{verma-stru}, 
and give explicit formulae for them in Theorem \ref{main-theo1}.
Note that in the case of $D(2, 1; a)$, characters and dimensions of finite dimensional simple modules
were obtained in \cite{V}.

(4). We obtain in Theorem \ref{homology} the first and second $\fg$-cohomology groups
with coefficients in the finite dimensional simple
and Kac modules. Analogous results were obtained for the type I Lie superalgebras in \cite{SZ1} and for
$\mathfrak{osp}_{k|2}$ in \cite{SZ4}.

Finally we comment briefly on the method used in this paper. Recall that essential use was made of knowledge
on the generalised Kazhdan-Lusztig polynomials \cite{Se96, B, VZ} and super duality \cite{CZ, CWZ, CL, BS, CLW}
when establishing the above listed properties of the Jantzen filtration for the type I superalgebras in \cite{SZ3}.
For the exceptional Lie superalgebras,  no such results are available.
Thus in this paper, we will take a ``bottom-up" approach instead by understanding first the submodule lattice
of parabolic Verma modules then deducing the properties of the Jantzen filtration.

\section{Exceptional Lie superalgebras}
\label{Preliminaries}

In this section we present some preliminary material which will be needed
later.  Throughout the paper, we work over the field $\C$ of complex numbers.

Let $\fg$ be one of the exceptional Lie superalgebras, that is, $\fg$ is either
$D(2,1;a)$ ($a\ne0,-1$), $F_4$ or $G_3$.  Relative to the distinguished Borel subalgebra $\fb$,
the Cartan matrix of $\fg$ is given by
\[
A=\left(\begin{array}{ccc}0&1&a\\\!-1\!&2&0\\\!-1\!&0&2\end{array}\right)\begin{array}{c}0\\1\\2\end{array}, \ \ \left(\begin{array}{ccccc}2&\!-1\!&0&0\\\!-1\!&2&\!-2\!&0\\0&\!-1\!&2&1\\0&0&\!-1\!&0\end{array}\right)\begin{array}{c}1\\2\\3\\0\end{array}
\mbox{ \ or \ }
\left(\begin{array}{ccc}0&1&0\\\!-1\!&2&\!-3\!\\\!-1\!&0&2\end{array}\right)\begin{array}{c}0\\1\\2\end{array}
\]
(where the numbers beside the matrix are the row indices)
and Dynkin diagram by
$$
\stackrel{\a_0}{\otimes}\put(-3,2){$\line(2,-1){15}$}\put(-3,2){$\line(2,1){15}$}\put(0,-6){$\sc a$}\put(11,7){{\Large ${\circ}$}$\put(0,2){${\sc\a_1}$}$}\put(11,-12){{\Large ${\circ}$}$\put(0,3){${\sc\a_2}$}$}%
\ \ \ \ \ \ \ \ \ , \ \ \ \ \stackrel{\!\!\!\!\!\a_1\!\!\!\!\!}{\mbox{\Large$\!\!\!\!\circ\!\!\!\!$}}\put(-1,4){$\line(1,0){20}$}\put(18,0){$\stackrel{\!\!\!\!\!\a_2\!\!\!\!\!}{\mbox{\Large$\circ$}}$}\put(24,1){$\Longrightarrow
$}
\put(42,0){$\stackrel{\!\!\!\!\!\a_3\!\!\!\!\!}{\mbox{\Large$\circ$}}$}\put(49,4){$\line(1,0){20}$}\put(68,0){$\stackrel{\!\!\!\!\!\a_0\!\!\!\!\!}{\otimes}$}
\ \ \ \ \ \ \ \ \ \ \ \ \ \ \ \ \ \ \ \ \ \ \ \ \ \ \ \ \ \mbox{or \ \ } \ \ \ \ \stackrel{\!\!\!\!\!\a_0\!\!\!\!\!}{\!\!\!\otimes\!\!\!}\put(-1,4){$\line(1,0){20}$}\put(18,0){$\stackrel{\!\!\!\!\!\a_1\!\!\!\!\!}{\mbox{\Large$\circ$}}$}
%
%
\put(30,6){$\line(1,0){16}$}
\put(27,4){$\line(1,0){19}$}
\put(30,2){$\line(1,0){16}$}\put(25,0){\large$<$}
\put(45,0){$\stackrel{\!\!\!\!\!\a_2\!\!\!\!\!}{\mbox{\Large$\circ$}}$}\ \ \ \ \ \ \ \ \ \ \ \ \ \ \ .
$$

\noindent We realize the root system of $\fg$ in the vector space $E=\oplus_{i=0}^{I_1}\C\es_i$, ($I_1=2$ if $\fg$ is $D(2,1;a)$, and $3$ otherwise)  equipped with a non-degenerate symmetric bilinear form such that $\{\d:=\es_0,\,\es_i\,|\,i=1,...,I_1\}$ is an orthogonal basis satisfying
\begin{eqnarray}\label{bilnear}
\begin{aligned}
D(2,1;a):\  &(\d,\d)=-(1+a), \ (\es_1,\es_1)=1,\ (\es_2,\es_2)=a, 
\\
F_4:\  &(\d,\d)=-6,\ (\es_i,\es_j)=2\d_{ij}, 
 \ i,j=1,2,3,\\
G_3:\  &(\d,\d)=-2,\ (\es_i,\es_j)=\d_{ij}, 
 \ i,j=1,2,3.
\end{aligned}
\end{eqnarray}
Denote by $\Pi$ the set of the simple roots, by $\D_{\bar0}^+$  (resp. $\D_{\bar1}^+$) the set of even (resp. odd) positive roots.  For convenience, we write $\D_1^+$ as the disjoint union  $\D_1^+=\D_1^\ddag\cup\D_1^\pm$. Then we have
\begin{eqnarray}\nonumber
D(2,1;a): &&\Pi=\{\a_0=\d-\es_1-\es_2,\,\a_1=2\es_1,\,\a_2=2\es_2\},\\  \label{sys1}
	&&\D_{\bar0}^+=\{2\es_1,\ 2\es_2,\ 2\d\},\\ \nonumber
	&&\D_1^\ddag=\{\d+\es_1\pm\es_2\},\ \ \D_1^{\pm}=\{\d-\es_1\pm\es_2\}; \\ \nonumber
F_4: && \Pi=\{\a_1=\es_1-\es_2,\,\a_2=\es_2-\es_3,\,\a_3=\es_3, \,
\a_0=\frac12(\d{\sc\!}-{\sc\!}\es_1{\sc\!}-{\sc\!}\es_2{\sc\!}-{\sc\!}\es_3)\},\\  \label{sys2}
	&&\D_{\bar0}^+=\{\d,\,\es_i,\,\es_i\pm\es_j\,(i<j)\,|\,i,j=1,2,3\}, \\ \nonumber
	&&\D_1^\ddag = \Big\{\frac12(\d + \es_1 \pm \es_2 \pm \es_3)\Big\},\,
		\D_1^{\pm} = \Big\{\frac12(\d - \es_1 \pm \es_2 \pm \es_3)\Big\};\\ \nonumber
G_3:  &&\Pi=\{\a_0=\d-\es_1+\es_3,\,\a_1=\es_1-\es_2,\,\a_3=2\es_2-\es_1-\es_3\},\\   \label{sys3}
      &&\D_{\bar0}^+=\{2\d,\,\es_1 - \es_2,\,\es_1 - \es_3,\,\es_2 - \es_3,\, \\ \nonumber
	 &&\qquad\quad\,2\es_1 - \es_2 - \es_3,\,2\es_2 - \es_1 - \es_3,\,\es_1 + \es_2 - 2\es_3\},\\ \nonumber
       && \D_1^\ddag=\{\d+\es_1-\es_2,\,\d+\es_1-\es_3,\,\d+\es_2-\es_3\},\\ \nonumber
       && \D_1^\pm=\{\d,\,\d-\es_1+\es_2,\,\d-\es_1+\es_3,\,\d-\es_2+\es_3\}. \nonumber
\end{eqnarray}
Let $\D^+=\D_{\bar0}^+\cup\D_1^+$ and $\D=\D^+\cup(-\D^+)$.
We choose the  Chevalley generators $e_i=e_{\a_i}$, $f_i=f_{\a_i},\,h_i$, $i=0,...,I_2$, where $I_2=3$ if $\fg=F_4$ and $I_2=2$ otherwise. They satisfy the commutation relations
\begin{eqnarray}\label{Chevalley-1}\nonumber
&& [e_i,f_j]=\d_{ij}h_i,\\ \nonumber
&& [h_i,e_j]=\frac{2(\a_i,\a_j)}{(\a_i,\a_i)}e_j,\quad
[h_i,f_j]=-\frac{2(\a_i,\a_j)}{(\a_i,\a_i)}f_j, \ \  i\ne0,\\
&& [h_0,e_j]=\frac12(\a_0,\a_j)e_j,\quad
[h_0,f_j]=-\frac12(\a_0,\a_j)f_j.
\end{eqnarray}

We define the composite root vectors  $e_\a,f_\a$ with $\a\in\D^+\bs\Pi$ as follows: Let $i\in\{0,...,I_2\}$ be the smallest such that $\a=\b+\a_i$ for some $\b\in\D^+$, and set
$e_\a=[e_\b,e_{\a_i}],\,f_\a=[f_{\a_i},f_\b]$.  For instance, in the case of $D(2,1;a)$,
\begin{eqnarray}\label{ch-1}
\nonumber        &      &
e_{\d+\es_1-\es_2}:=[e_{1},e_{0}],\ f_{\d+\es_1-\es_2}:=[f_0,f_1],\\\nonumber
&      &
e_{\d-\es_1+\es_2}:=[e_2,e_{0}],\ f_{\d-\es_1+\es_2}:=[f_0,f_2], \\
 &      &
 e_{\d+\es_1+\es_2}:=[e_{\d-\es_1+\es_2},e_{1}]
		=[e_{\d+\es_1-\es_2},e_{2}]=[[e_{0},e_{1}],e_{2}],\  \\ \nonumber
&		&
f_{\d+\es_1+\es_2}:=[f_{1},f_{\d-\es_1+\es_2}],\\ \nonumber
 &      &
e_{2\d}:=[ e_{\d+\es_1+\es_2}, e_{0}]=-[ e_{\d+\es_1-\es_2}, e_{\d-\es_1+\es_2}],\ \\\nonumber
&		&
 f_{2\d}:=[f_{0},f_{\d+\es_1+\es_2}].
\end{eqnarray}
Then we have
\[
\fg=\fn^-\oplus\fh\oplus\fn^+,\ \ \mbox{ where \ } \fh=\bigoplus_{i=0}^{I_2}\C h_i,\ \fn^\pm=\bigoplus_{\a\in\D^+}\fg^{\pm\a},
\]
with $\fg^\a=\C e_\a$, $\fg^{-\a}=\C f_\a$.
There is an anti-involution $\tau$ of the universal enveloping algebra $U(\fg)$ such that
\begin{equation}\label{tau====}
\tau(\fg^\a)=\fg^{-\a},\quad  \tau(h)=h, \quad \a\in\D,\ h\in\fh.
\end{equation}
Here by $\tau$ being an anti-involution we mean that $\tau^2=1$ and $\tau(xy)=\tau(y)\tau(x)$ for all $x,y\in U(\fg)$.
Note that there is no sign factor on the right hand side of the second formula 
when both $x$ and $y$ are odd. One may introduce a sign factor to obtain a graded version of $\tau$, which 
however will not play any role in this paper. 


\begin{remark}\rm
It follows from the definition of $e_\a$ that $\tau(e_\a)$ is a nonzero scalar multiple of $f_\a$.
This scalar factor will not play any role in later computations.
\end{remark}

Observe that $\fg$ admits a $\Z$-gradation compatible with its $\Z_2$-gradation
\begin{eqnarray}\label{Z-grad}
\fg=\bigoplus_{k=-2}^2\fg_k, &\quad  \mbox{such that  }
{\rm deg\,}e_i=-{\rm deg\,} f_i=\d_{i,0},\ \ {\rm deg\,}h_i=0 , \\
\label{fg2===}
\fg_2=\C e_{\th}, &\quad \mbox{where $\th=\d$ if $\fg=F_4$, and  $\th=2\d$ otherwise.}
\end{eqnarray}
Here $\fg_0$  is a reductive Lie algebra with positive root system $\D_0^+=\D_{\bar0}^+\bs\{\th\}$:
\[
\fg_0
\cong\left\{\begin{array}{ll}
\C h_0\oplus sl_2\oplus sl_2&\mbox{if }\fg=D(2,1;a),\\[4pt]
\C h_0\oplus so(7)&\mbox{if }\fg=F_4,\\[4pt]
\C h_0\oplus G_2&\mbox{if }\fg=G_3.
\end{array}\right.
\]
We choose $h_\th\in\fh$ to be the unique element such that
\begin{equation}\label{h-theta}
[h_\th,e_\a]=\frac{2(\th,\a)}{(\th,\th)}e_\a\mbox{ \ for all }\a\in\D^+.
\end{equation}

For any $\l\in\fh^*$ and $h=\sum_{j=0}^{I_2}c_jh_j$ in $\fh$, we have
\begin{equation}\label{l(h)}
\l(h)=\frac{c_0}{2}(\a_0,\l)+\sum_{j=1}^{I_2}c_j\frac{2(\a_j,\l)}{(\a_j,\a_j)}.
\end{equation}
Write any $\l\in\fh^*$ in terms of the $\d\es$-basis as
\[
\l=\l_0\d+\sum_{i=1}^{I_1}\l_i\es_i=(\l_0\mid \l_1,\l_2,...,\l_{I_1}),
\]
and say that $\l_i$ is the {\it $i$-th coordinate} of $\l$ ($i=0,...,I_1$).  However,
we should bear in mind that in the case of $G_3$, the roots span a proper subspace of $E$,
and by \eqref{l(h)} the following elements have the same projections onto the subspace
\begin{equation}
\label{Other-weight-exp}
(\l_0\,|\,\l_1,\l_2,\l_3), \quad (\l_0\,|\,\l_1+x,\,\l_2+x,\,\l_3+x)\mbox{ \ for any }x\in\C\mbox{ \ if \ }\fg=G_3.
\end{equation}

Let $\rho_{\bar0}$ (resp. $\rho_1$) be half of the sum of the even (resp. odd) positive roots, and let
$\rho=\rho_{\bar0}-\rho_1$. Then we have the following table.\vskip5pt
\begin{eqnarray}\label{rho====}
\begin{array}{c|c|c|c}
\hline\\
\fg &  	\rho_{\bar0} & 	\rho_1 & 		\rho\\
\hline\\
D(2,1;a)& (1 \mid 1,1) &  	(2\mid 0,0) &  (-1 \mid 1,1) \\
\hline\\
F_4& 	\left(\frac12\,\Big|\,\frac52,\frac32,\frac12 \right)&  (2\,|\,0,0,0) &  \left(-\frac32\,\Big|\,\frac52,\frac32,\frac12\right) \\
\hline\\
G_3& 	(1\,|\,2,1,-3)&   \left(\frac72\,\Big|\,0,0,0\right) &  \left(-\frac52\,\Big|\,2,1,-3\right).\\
\hline
\end{array}
\end{eqnarray}
\vskip5pt
\noindent
Given any $\l\in\fh^*$, we let $\l^\rho=\l+\rho$ with coordinates denoted by $\l_i^\rho$, namely
\begin{equation}\label{l-rho}
\l^\rho=\l+\rho=(\l_0^\rho\,|\,\l_1^\rho,...,\l_{I_1}^\rho).
\end{equation}

Let $\si_i$ be the reflection on $E$ which acts by changing the sign of the $i$-th coordinate of
any element $\l$. In the cases of $F_4$ and $G_3$, the symmetry group ${\rm Sym}_3$ of rank 3
acts on  $E$ by permuting the coordinates $\l_1,\l_2,\l_3$ of any element $\l$.
Denote $\bar\si=\si_1\si_2\si_3$, which changes the signs of the coordinates $\l_1,\l_2,\l_3$ simultaneously.

Denote by $W_0$ and $W$ respectively the Weyl groups of $\fg_0$ and $\fg$. Then $W=W_0\times\langle\si_0\rangle$ and
\begin{eqnarray}\label{W0==}
&                    &
W_0=\left\{\begin{array}{lll}
\langle\si_1,\si_2\rangle
&\mbox{if } \fg=D(2,1;a),\\[4pt]
{\rm Sym}_3\ltimes\langle\si_1,\si_2,\si_3\rangle&\mbox{if }\fg=F_4,\\[4pt]
{\rm Sym}_3\times\langle\bar\si\rangle&\mbox{if }\fg=G_3.
\end{array}\right.
\end{eqnarray}
Here $\bar\si$ denotes its restriction on the span of the roots of $G_3$. Observe from \eqref{Other-weight-exp} that $\bar\si$ is in fact of length $2$ in the Weyl group.

As usual, we define the {\it dot action} of $W$ on $\fh^*$ by $w\cdot\l=w(\l+\rho)-\rho$  for any $w\in W$ and $\l\in\fh^*$. Let $\l^{\si_0}:=\si_0\cdot\l$. Then
\begin{equation}\label{l-si-0}
\l^{\si_0}:=(c-\l_0\,|\,\l_1,...,\l_{I_1}),
\end{equation}
with $c=2,3$ and $5$ for $\fg=D(2,1;a),F_4$ and $G_3$ respectively.

We have the usual partial order ``\,$\preccurlyeq$\,'' on $\fh^*$ given by  \begin{equation}\label{part-order}
\l\preccurlyeq \mu\ \ \Longleftrightarrow\ \ \mu-\l\in\Z_+\Pi=\Big\{\sum_{i=0}^{I_1}m_i\a_i\,\Big|\,m_i\in\Z_+\Big\}.
\end{equation}
Let $\G=\Z\D^+$ be the root lattice endowed with the lexicographical order, namely,
for $x=\sum_{i=0}^{I_1}x_i\a_i$ and $y=\sum_{i=0}^{I_1}y_i\a_i$,
\begin{equation}\label{order}
x<y\ \ \Longleftrightarrow\ \ \mbox{ for the first $i$ with $x_i\ne y_i$, we have $x_i<y_i$.}
\end{equation}

\section{Generalised Jantzen filtration}\label{filtration-sec}

Let $\fg$ be an exceptional Lie superalgebra, and let $\fp=\fl+\fu$ be the maximal parabolic subalgebra  with the purely even
Levi subalgebra $\fl=\fg_0$ and nilradical $\fu=\fg_{+1}+\fg_{+2}$.  Then
$\fp\supset\fb\supset\fh$, where $\fb$ is the distinguished Borel subalgebra.
We shall refer to $\fp$ as the {\it distinguished maximal parabolic subalgebra} for convenience.
Let $\fu^-=\fg_{-1}\oplus\fg_{-2}$, then $\fg=\fu^-\oplus\fp$. The notation will be in force throughout the paper.

\subsection{Parabolic category $\cO^\fp$}\label{sect:parabolic}

We consider the parabolic category $\cO^\fp$ of
$\Z_2$-graded $\fg$-modules, which are finitely generated over $U(\fg)$,
decompose into direct sums of weight spaces and are locally $U(\fp)$-finite.
Denote by $P_0$ the set of $\fg_0$-integral weights, and by $P^+_0$ the set of $\fg_0$-integral dominant weights.
Then
\[
\begin{aligned}
P_0&= \left\{\l\in\fh^*\, \left|\,\frac{2(\l, \alpha)}{(\alpha, \alpha)}\in\Z, \ \forall\, \alpha \in \D^+_0\right.\right\}, \\
P^+_0&= \left\{\l\in\fh^*\, \left|\,\frac{2(\l, \alpha)}{(\alpha, \alpha)}\in\Z_+, \ \forall\, \alpha \in \D^+_0\right.\right\},
\end{aligned}
\]
where  $\D^+_0$ is the set of the positive roots of $\fg_0$.
Given any $\l\in P^+_0$, the simple $\fp$-module $L^0(\l)$ with highest weight  $\l$ is necessarily finite dimensional.
We define the parabolic Verma module $V(\l)=U(\fg)\otimes_{U(\fp)}L^0(\l)$ over $U(\fg)$, which has a unique simple quotient $L(\l)$.  Then $V(\l)$ (and hence $L(\l)$) belongs to $\cO^p$ if and only if $\l\in P^+_0$. Furthermore,
$\{L(\l), \mathfrak{P}(L(\l))\,|\,  \l\in P^+_0\}$ is the set of non-isomorphic simple objects in $\cO^\fp$, where $\mathfrak{P}$ is the parity reversal functor.  Set
\begin{eqnarray}\label{eq:RR}
R_{\bar0}=\prod_{\a\in\D_{\bar0}^+}(e^{\a/2}-e^{-\a/2}), \quad
R_1=\prod_{\b\in\D_1^+}(e^{\b/2} +e^{-\b/2}).
\end{eqnarray}
One can immediately show that the character of the parabolic Verma module $V(\l)$ for any $\l\in P^+_0$ is given by
\begin{eqnarray}\label{char-Verma-for}
\ch V(\l) =\frac{R_1}{R_{\bar0}}\sum_{w\in
W_0}\sign{w}e^{w(\l+\rho)}.
\end{eqnarray}

Let $P=\left\{\l\in\fh^* \,\left|\,\frac{2(\l, \alpha)}{(\alpha, \alpha)}\in\Z, \ \forall\, \alpha \in \D^+_{\bar0}\right.\right\}$,
and call the elements {\it $\fg$-integral}. The elements of $P\cap P_0^+$ will be said to be {\it $\fg$-integral and $\fg_0$-dominant}.  Let $P^+$ be the subset of $P\cap P_0^+$ consisting of elements $\l$ which satisfy the following conditions.
Write $\l\in P^+$ in coordinates $\l=(\l_0\,|\,\l_1,...,\l_{I_1})$, then
\begin{eqnarray}\label{eq:int-domin}
\begin{aligned}
		&\quad	 \frac{2(\l, \alpha)}{(\alpha, \alpha)}\in\Z_+, \quad \forall\,\alpha \in \D^+_{\bar0}, \quad \text{and}\\
D(2,1;a):	& \quad      \l_0\ge2\mbox{ or }\l_0=1,\,\l_1+1=\pm a(\l_2+1)\mbox{ or }\l=0,\\
F_4:		&\quad       \l_0\ge2\mbox{ or }\l_0=\frac32,\,2(\l_1-\l_2-\l_3)+1=0\mbox{ or }\\
		&\quad \l_0=1,\,\l_1=\l_2,\l_3=0\mbox{ or }\l=0, \\
G_3:		&\quad       \l_0\ge3\mbox{ or }\l_0=2,\l_1=\l_2\mbox{ or }\l=0.
\end{aligned}
\end{eqnarray}
Call elements of $P^+$ {\it integral dominant} with respect to $\fg$. It is known \cite{Kac2, Kac3} that
\begin{equation}\label{fin-l-l}
\dim L(\l)<\infty \quad  \text{ if and only if } \  \l\in P^+.
\end{equation}
In this case,  we let  $\bar\l_0:=\frac{2(\th,\l)}{(\th,\th)}$, which is
$\l_0$ if $\fg=D(2,1;a)$ or $G_3$, and $2\l_0$ if $\fg=F_4$.
Then $U(\fg)f_{\th}^{\bar\l_0+1}v_\l$ is a submodule of $V(\l)$; the quotient module
\equa{Kac-module}{K(\l)=V(\l){\mbox{\Large{$/$}}}U(\fg)f_{\th}^{\bar\l_0+1}v_\l}
is usually referred to as the {\it Kac module} \cite{Kac2, Kac3},
which is the maximal finite dimensional quotient of $V(\l)$.

An element $\l\in P_0$ is {\it atypical} if there exists some isotropic odd root $\g\in\D_1^+$ such that $(\l^\rho,\g)=0$. In this case $\g$ is called an {\it atypical root} of $\l$.  If no atypical root exists, $\l$ is called {\it typical}.
Given any atypical weight $\l\in P_0$, denote by $\D_{\rm aty}(\l)\subset \D_1^+$ the set of its atypical roots.

The following result is due to Kac \cite{Kac2, Kac3}.
\begin{proposition}\label{typical-prop}
Assume that $\l\in P^+$ is  a typical weight, then
$L(\l)=K(\l)$, and \equa{typical-char} {\dis \ch L(\l)=\ch
K(\l)={\dis\frac{R_1}{R_{\bar0}}}\sum_{w\in W}
\sign{w}e^{w(\l+\rho)}. }
\end{proposition}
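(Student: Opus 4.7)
The strategy is to produce an explicit short exact sequence
\[
0\to N\to V(\lambda)\to L(\lambda)\to 0
\]
and read the character off it, while simultaneously verifying $L(\lambda)=K(\lambda)$ using the typicality hypothesis.

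First I would check that $w:=f_\theta^{\bar\lambda_0+1}v_\lambda$ is a primitive vector in $V(\lambda)$ of weight $\mu:=s_\theta\cdot\lambda=\lambda-(\bar\lambda_0+1)\theta$. For a simple root $\alpha_i$ with $i\ne 0$ this is immediate: $\theta-\alpha_i\notin\Delta$ gives $[e_{\alpha_i},f_\theta]=0$, and $e_{\alpha_i}v_\lambda=0$ by $\mathfrak{g}_0$-dominance of $\lambda$. For $i=0$, a short induction on the power of $f_\theta$ based on the $\mathfrak{sl}_2$-theory for the triple $(e_\theta,h_\theta,f_\theta)$ together with the identity $\bar\lambda_0=2(\theta,\lambda)/(\theta,\theta)$ yields $e_0w=0$. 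Hence $N:=U(\mathfrak{g})w$ is a highest-weight submodule of $V(\lambda)$, and $K(\lambda)=V(\lambda)/N$.

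The main step, and the principal obstacle, is to show that $N$ exhausts the unique maximal proper submodule of $V(\lambda)$, so that $L(\lambda)=K(\lambda)$. I would do this via the Shapovalov--Kac determinant of the contravariant form on $V(\lambda)$ induced by the anti-involution $\tau$ of \eqref{tau====}. On each weight space $V(\lambda)_{\lambda-\nu}$ the determinant factors into contributions from the positive roots; crucially, every $\gamma\in\Delta_1^+$ contributes a factor proportional to $(\lambda+\rho,\gamma)$. The typicality hypothesis makes all odd factors nonzero, while integral $\mathfrak{g}_0$-dominance of $\lambda$ already precludes vanishing of every even factor other than the single one attached to $\theta$ at $\nu=(\bar\lambda_0+1)\theta$. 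Consequently the radical of the form is generated by $w$, giving $N$ maximal and $L(\lambda)=K(\lambda)$.

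For the character, I would view $N$ as a homomorphic image of the full BGG Verma module $M(\mu)$ with $\ch M(\mu)=\frac{R_1}{R_{\bar0}}e^{\mu+\rho}$. The parabolic relations $f_i^{\lambda_i+1}v_\lambda=0$ in $V(\lambda)$ for $i\ne 0$ transfer to $w$ because $[f_\theta,f_i]=0$ (as $-\theta-\alpha_i\notin\Delta$), and the identity $(\mu+\rho,\alpha_i)=(\lambda+\rho,\alpha_i)$ for $i\ne 0$ (using $s_\theta\alpha_i=\alpha_i$ since $\theta\perp\alpha_i$) recasts them as the BGG parabolic relations at $\mu$. A repeat of the Shapovalov--Kac argument shows these are the \emph{only} relations, so $\ch N=\frac{R_1}{R_{\bar0}}\sum_{w\in W_0}\sign{w}\,e^{w(\mu+\rho)}$. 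Combining with \eqref{char-Verma-for} and the decomposition $W=W_0\sqcup W_0\sigma_0$, where $\sigma_0=s_\theta$, gives
\[
\ch L(\lambda)=\ch V(\lambda)-\ch N=\frac{R_1}{R_{\bar0}}\sum_{w\in W}\sign{w}\,e^{w(\lambda+\rho)},
\]
as claimed.
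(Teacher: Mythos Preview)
The paper does not give its own proof of this proposition; it is quoted as a result of Kac. Your outline has the right shape, but Step~1 contains a genuine error rooted in the super structure of $\fg$, and this propagates through the rest of the argument.

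The vector $w=f_\theta^{\bar\l_0+1}v_\l$ is \emph{not} a $\fg$-highest weight vector. Your justification of $e_0w=0$ confuses the odd simple root vector $e_0=e_{\a_0}$ with the even vector $e_\theta$: the $\mathfrak{sl}_2$-triple $(e_\theta,h_\theta,f_\theta)$ says nothing about the action of $e_0$. In fact $\a_0-\theta=-\g$ for some $\g\in\D_1^+$ (for instance $\g=\d+\es_1+\es_2$ when $\fg=D(2,1;a)$), so $[e_0,f_\theta]$ is a nonzero multiple of $f_\g$; since $[f_\g,f_\theta]=0$ one computes $e_0w=c(\bar\l_0+1)\,f_\theta^{\bar\l_0}f_\g v_\l$, which is nonzero in the free $U(\fu^-)$-module $V(\l)$. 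A related miscalculation is your identity $s_\theta\cdot\l=\l-(\bar\l_0+1)\theta$: because $\rho=\rho_{\bar0}-\rho_1$ one has $\langle\rho,\theta^\vee\rangle\ne 1$ here, and the weight of $w$ is actually $\l^{\si_0}-2\rho_1$, not $\l^{\si_0}=s_\theta\cdot\l$.

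The repair, which the paper carries out later in the proof of Theorem~\ref{verma-stru}(1) (see \eqref{v'-llll}--\eqref{u=====}), is that the genuine $\fg$-highest weight vector of $N=U(\fg)w$ is $\prod_{\a\in\D_1^+}e_\a\cdot w$, of weight $\l^{\si_0}$. With this correction your subtraction strategy can be salvaged: typicality and the description of central characters force $N\cong V(\l^{\si_0})$ (the latter being simple since $\l^{\si_0}$ is typical and not in $P^+$), whence $\ch L(\l)=\ch V(\l)-\ch V(\l^{\si_0})$ together with $W=W_0\sqcup W_0\si_0$ gives the stated formula.
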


\smallskip

Recall that a module is said to be rigid if it has a unique Loewy filtration. We have the following result.

\begin{theorem}\label{thm:rigid}
The parabolic Verma modules in $\cO^\fp$ are rigid.
\end{theorem}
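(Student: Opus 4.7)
The plan is to deduce rigidity of $V(\lambda)$ directly from the explicit description of its submodule lattice furnished by Theorem \ref{verma-stru}. Recall that a module of finite Loewy length is rigid if and only if its radical filtration coincides with its socle filtration, in which case this common filtration is the unique Loewy filtration. The argument will be organised around this equivalence.

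First I would invoke Theorem \ref{verma-stru}, which produces the primitive vectors of $V(\lambda)$ together with the inclusion relations among the submodules they generate, to extract a finite Hasse diagram of submodules of $V(\lambda)$. For the exceptional $\fg$ the degree of atypicality is at most one, so the atypical content of $\lambda$ reduces to at most a single isotropic odd root in $\D_{\rm aty}(\lambda)$, and the resulting lattice is explicit in both the typical and atypical cases. From this diagram the composition factors $L(\mu)$ with $\mu\preccurlyeq\lambda$ and their multiplicities are read off directly.

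Next I would compute the radical filtration $V(\lambda)\supset{\rm rad}\,V(\lambda)\supset{\rm rad}^2V(\lambda)\supset\cdots$ from the top, by identifying the unique maximal submodule at each stage, and the socle filtration $0\subset{\rm soc}\,V(\lambda)\subset{\rm soc}^2V(\lambda)\subset\cdots$ from the bottom, by collecting at each stage all simple submodules of the corresponding quotient. The key technical point is to verify, case by case according to the classification of weights in Theorem \ref{verma-stru}, that these two filtrations produce the same sequence of semisimple layers; concretely, for each composition factor $L(\mu)$ one checks that the largest $i$ with $L(\mu)$ appearing in ${\rm rad}^{i-1}V(\lambda)/{\rm rad}^iV(\lambda)$ equals the smallest $j$ with $L(\mu)$ appearing in ${\rm soc}^jV(\lambda)/{\rm soc}^{j-1}V(\lambda)$.

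The main obstacle is precisely this layer-by-layer comparison: at intermediate stages the submodule lattice can branch, with several maximal submodules of a given subquotient, and a priori distinct Loewy filtrations could arise from different choices of branch. Once the radical and socle filtrations are shown to agree, however, the sandwich principle ${\rm rad}^iV(\lambda)\subseteq F^i\subseteq{\rm soc}^{\,\ell-i}V(\lambda)$, valid for every Loewy filtration $\{F^i\}$ of length $\ell$, forces every Loewy filtration to coincide with them, and rigidity follows.
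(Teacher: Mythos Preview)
Your proposal is correct and takes essentially the same approach as the paper: both deduce rigidity by inspecting the explicit structure given in Theorem~\ref{verma-stru} and checking that the radical and socle filtrations agree. The paper packages this inspection via the primitive weight graph formalism (Lemma~\ref{lem:graph}), reading off the radical layers as the level sets $P(V(\lambda))_i$ of the graph, whereas you describe the equivalent radical/socle comparison directly; the substance is the same, and the ``obstacle'' you mention is minimal here since the graphs in \eqref{Stru-Kac-mod} have at most five vertices and length at most two.
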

The theorem will be proven in Section \ref{sect:rigid}.

For any $\fg$-module $V$  in $\cO^\fp$, we use $V^\vee$ to denote the $\fg$-module 
which is the direct sum of the duals of weight spaces of $V$ with the $\tau$-twisted $U(\fg)$-action
defined for all $x\in U(\fg)$ and $f\in V^\vee$ by 
\[
(xf) (v) = f(\tau(x) v), \quad \forall v\in V. 
\]
Then $V^\vee$ is in $\cO^\fp$; in particular,  $V^\vee=V$ if $V$ is a finite dimensional simple module. 
This notion of dual modules will be used later.

\subsection{Generalised Jantzen filtration}

We recall from \cite{SZ3} some elements of
the generalised Jantzen filtration for parabolic Verma modules over Lie superalgebras.

Let $T:=\C[[t]]$ be the ring of formal power series in the
indeterminate $t$, and consider the category $\fg$-Mod-$T$ of $\Z_2$-graded $\U(\fg)$-$T$
bimodules such that the left action of $\C\subset\U(\fg)$ and right
action of $\C\subset T$ agree.
We take the  $\C$-algebra homomorphism $\phi:
\U(\fh)\longrightarrow T$ such that \begin{equation}\label{phiii}\phi(h)=t\d(h)\mbox{ \ for }h\in\fh.\end{equation}
This defines a $\fp$-action on $T$ (recall that $\fp=\fg_0\oplus\fg_1\oplus\fg_2$) by
\begin{eqnarray*}
h f = f\phi(h)\mbox{ \ and }
X f = 0\mbox{ \ for $f\in T,\,h\in\fh,\,X\in\fg^\a\subset\fp$.}
\end{eqnarray*}
Given any object $M$ in the category $\fg$-Mod-$T$,
we define the deformed weight space of
weight $\mu\in\fh^*$ by
\begin{equation}\label{deformed-w}
M_\mu = \{m\in M \mid h m = \mu(h) m + m\phi(h), \ \forall\,
h\in\fh\}.
\end{equation}
The deformed parabolic category $\cO^\fp(T)$ is the full
subcategory of $\fg$-Mod-$T$ such that each object
\begin{itemize}
\item is finitely generated over
$\U(\fg)\otimes_\C T$;

\item decomposes into the direct sum of deformed weight spaces; and

\item is locally $\U(\fp')$-finite, where $\fp'=[\fp, \fp]$.
\end{itemize}

The parabolic Verma modules are distinguished objects of
$\cO^\fp(T)$.
For $\lambda\in P_0^+$, let $L^0(\lambda)$ be the finite dimensional irreducible
$\fp$-module with highest weight $\lambda$. Introduce the $\fp$-module $L^0_T(\lambda)
=L^0(\lambda)\otimes_\C T$ with $\fp$ acting diagonally. This is
also a $\fp$-$T$-bimodule with $T$ acting on the right by
multiplication on the factor $T$. The parabolic
Verma module with highest weight $\lambda$ is
\begin{eqnarray}\label{deformed-Kac-module}
V_T(\lambda):=\U(\fg)\otimes_{\U(\fp)} L^0_T(\lambda)
=\U(\fg)\otimes_{\U(\fp)} \big(L^0(\lambda)\otimes_\C T\big).
\end{eqnarray}

Let $\tau$ be the anti-involution on the universal
enveloping algebra $\U(\fg)$ defined in \eqref{tau====}.
There exists a unique $T$-bilinear form
\begin{eqnarray}\label{form}
\langle\ , \ \rangle: V_T(\lambda)\times V_T(\lambda)\longrightarrow T
\end{eqnarray}
which is contravariant
\begin{eqnarray}\label{tau-prop}
\langle x m, m' \rangle= \langle m, \tau(x) m' \rangle, \quad
\forall \ m, m' \in V_T(\lambda), \ x\in\U(\fg),
\end{eqnarray}
non-degenerate in the sense that
\[
\langle m, V_T(\l) \rangle=\{0\} \quad \text{only when $m=0$},
\]
and normalised so that the highest weight vector $v_\l$ of $L^0(\l)$ satisfies
$\langle v_\l, v_\l\rangle=1$.

Let $V_T^i(\lambda) = \left\{m\in V_T(\lambda) \mid \langle m,
V_T(\lambda)\rangle\subset t^i\C[[t]] \right\}$ for each $i\in\Z_+$.
These are $\fg$-T submodules of $V_T(\lambda)$, which give rise to the following descending
filtration:
\begin{eqnarray}\label{filt-T}
V_T(\lambda)=V_T^0(\lambda)\supset V_T^1(\lambda)\supset
V_T^2(\lambda)\supset \cdots.
\end{eqnarray}

Regard $\C$
as a $T$-module with $f(t)\in\C[[t]]$ acting by multiplication by
$f(0)$. Let $\cR: \cO^\fp(T)\longrightarrow \cO^\fp$ be the specialisation
functor which sends an object $M$ in $\cO^\fp(T)$ to $M\otimes_T\C$ in
$\cO^\fp$, and a morphism $\psi: M\to N$ to
\[
\cR(\psi): M\otimes_T\C\to N\otimes_T\C, \quad \cR(\psi)(m\otimes_T
c) = \psi(m)\otimes_T c.
\]
Denote $V(\lambda)= $ $V_T(\lambda)\otimes_T\C$ and
$V^i(\lambda) = V^i_T(\lambda)\otimes_T\C$. Applying the
specialisation functor $\cR$ to \eqref{filt-T} we obtain the
following descending filtration
\begin{eqnarray}\label{filt}
V(\lambda)=V^0(\lambda)\supset V^1(\lambda)\supset
V^2(\lambda)\supset\cdots
\end{eqnarray}
for the parabolic Verma module $V(\l)$, which will be referred to as the {\em
Jantzen filtration} for $V(\lambda)$. The consecutive
quotients of the Jantzen filtration are
\begin{eqnarray}\label{filt-1}
V_i(\lambda)=V^i(\lambda)/V^{i+1}(\lambda), \quad i=0, 1, 2, \dots.
\end{eqnarray}
%
One of the main results on the Jantzen filtration is the following.
\begin{theorem}\label{main-1}
For any $\lambda\in P_0^+$, the Jantzen filtration of the parabolic Verma module $V(\lambda)$ is the unique Loewy
filtration.
\end{theorem}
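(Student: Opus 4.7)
The plan is to deduce Theorem \ref{main-1} by combining the rigidity statement of Theorem \ref{thm:rigid} with the explicit submodule lattice of Theorem \ref{verma-stru}. Since $V(\lambda)$ is rigid, its socle and radical filtrations coincide, forming its unique Loewy filtration of some length $n$. To prove the theorem, I would show that the Jantzen filtration coincides with this common filtration by verifying two properties: semisimplicity of each layer $V_i(\lambda)$, and length-matching $\ell = n$.

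For semisimplicity, I would exploit the contravariant form. The $T$-bilinear form \eqref{form}, upon dividing by $t^i$ and specialising $t = 0$, induces a non-degenerate $\C$-bilinear contravariant form on each consecutive quotient $V_i(\lambda)$. By the $\tau$-contravariance \eqref{tau-prop}, this yields an isomorphism $V_i(\lambda) \cong V_i(\lambda)^\vee$ in $\cO^\fp$, so every layer is self-dual, and in particular its socle and head are isomorphic as $\fg$-modules. Using the explicit description of the submodule lattice of $V(\lambda)$ from Theorem \ref{verma-stru}, one then classifies the possible subquotients of $V(\lambda)$ and verifies that any such self-dual subquotient with matching socle and head is forced to be semisimple. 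This step proceeds case by case across $D(2,1;a)$, $F_4$, $G_3$ and the different atypicality classes of $\lambda$.

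For length-matching, I would invoke the Jantzen sum formula inherited from the deformation setup of \cite{SZ3}, which expresses $\sum_{i \geq 1} \ch V^i(\lambda)$ as an explicit character sum measuring the orders of vanishing of the contravariant form on $V_T(\lambda)$. Comparing this with the composition series of $V(\lambda)$ read off from Theorem \ref{verma-stru} determines the character of each layer $V_i(\lambda)$ and hence $\ell$; on the other hand, the Loewy length $n$ is likewise computable from the submodule lattice, giving $\ell = n$. Combined with the semisimplicity of the layers, this shows the Jantzen filtration is a Loewy filtration; uniqueness from Theorem \ref{thm:rigid} then identifies it with \emph{the} Loewy filtration.

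The main obstacle will be the semisimplicity step, since general self-duality does not imply semisimplicity. The conclusion rests essentially on the refined submodule information in Theorem \ref{verma-stru}: one must verify, for every exceptional $\fg$ and every atypicality pattern of $\lambda$, that no non-trivial self-extension between simple constituents can survive inside a self-dual subquotient of $V(\lambda)$. This explicit case analysis, rather than any deep categorical input, will carry the technical weight of the proof.
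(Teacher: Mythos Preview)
Your high-level strategy matches the paper's: invoke rigidity (Theorem~\ref{thm:rigid}), show the Jantzen layers are semisimple, and then verify that the Jantzen length equals the Loewy length read off from Theorem~\ref{verma-stru}. The implementation of both sub-steps, however, diverges from the paper. For semisimplicity, the paper does not do any case analysis: it simply observes from Theorem~\ref{verma-stru} that $V(\lambda)$ is multiplicity free, and then cites the proof of \cite[Theorem~3.6]{SZ3} to conclude that every Jantzen layer is semisimple. Your self-duality argument is correct, but the case analysis you plan is unnecessary---once multiplicity-freeness is noted, self-duality of a subquotient forces it to be semisimple immediately (each simple occurs once, and must sit in both socle and head, hence splits off).

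For the length, the paper takes a route quite different from yours. Rather than a Jantzen sum formula, it picks an explicit highest weight vector $v'$ for the socle of $V(\lambda)$ (the unique simple submodule), lifts it to $V_T(\lambda)$, and computes the pairing $D=\langle v',v'\rangle$ directly, showing $D=t^2\cdot(\text{unit})$ in the hardest cases (the last two graphs of \eqref{Stru-Kac-mod}); this gives $\ell=2$ on the nose. Your sum-formula approach could in principle work, but there is a soft gap: you are assuming a usable Jantzen sum formula for parabolic Verma modules in this super setting is available from \cite{SZ3}, which is not stated or used anywhere in the paper, and whose precise form (with odd isotropic roots present) would need to be established. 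Even granting such a formula, you would still have to evaluate it explicitly to pin down the multiplicity of the socle character in $\sum_{i\ge1}\ch V^i(\lambda)$, which is comparable work to the paper's single pairing computation. The paper's targeted computation of $\langle v',v'\rangle$ for the socle vector is thus both more direct and avoids any dependence on an unproven sum formula.
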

We will prove the theorem in Section \ref{proof-main-1}.

\medskip

Let $H_i(\fu^-, L(\lambda))$ be the
$i$-th Lie superalgebra homology group of  $\fu^-:=\fg_{-1}\oplus\fg_{-2}$ with coefficients
in the restriction of $L(\lambda)$. Then $H_i(\fu^-, L(\lambda))$ admits a semi-simple $\fg_0$-action. For any  $\l, \mu\in P_0^+$,  define the following {\it generalised Kazhdan-Lusztig
polynomials} \cite{Se96}:
\begin{eqnarray}\label{poly}
p_{\lambda \mu}(q) = \sum_{i=0}^\infty (-q)^i [H_i(\fu^-, L(\lambda)):
L^0(\mu)],
\end{eqnarray}
where $[H_i(\fu^-, L(\lambda)): L^0(\mu)]$ is the multiplicity of
$L^0(\mu)$ in $H_i(\fu^-, L(\lambda))$.

\begin{remark}\label{rem:HH} \rm
The generalised Kazhdan-Lusztig polynomials may also be defined in terms of the $\fu$-cohomology groups $H^i(\fu, L(\lambda))$ instead \cite{SZ3}, as
\[
H^i(\fu, L(\lambda))=H_i(\fu^-, L(\lambda)^\vee)=H_i(\fu^-, L(\lambda)) \quad \text{ for all $\l\in P^+_0$},
\]
where $L(\lambda)^\vee$ is the $\tau$-twisted  dual of $L(\l)$ defined at the end of Section \ref{sect:parabolic}.
\end{remark}

Choose a linear order on $P_0^+$ compatible with the usual partial
order defined by the positive roots. Then the matrix
$P(q)=\Big(p_{\lambda \mu}(q)\Big)_{\lambda, \mu\in P^+_0}$ is upper
unitriangular. Let
$A(q)=\Big(a_{\lambda \mu}(q)\Big)_{\lambda, \mu\in P^+_0}$ be the
inverse matrix of $P(q)$, and refer to $a_{\lambda \mu}(q)$ as the
{\it inverse Kazhdan-Lusztig polynomials} of $\fg$.

For any $\lambda,
\mu\in P^+_0$, we also define
\begin{eqnarray}\label{J-poly}
J_{\lambda \mu}(q) =\mbox{$\dis \sum_{i=0}^\infty$}q^i [V_i(\lambda): L(\mu)],
\end{eqnarray}
where $[V_i(\lambda): L(\mu)]$ denotes the multiplicity of the
irreducible $\fg$-module $L(\mu)$ in $V_i(\lambda)$. They were referred to as
{\it Jantzen polynomials} in \cite{SZ3}.

Another main result on the Jantzen filtration is the following.
\begin{theorem}\label{main-2}
For any $\lambda, \mu\in P_0^+$,
the Jantzen polynomials $J_{\lambda \mu}(q)$ coincide with the
inverse Kazhdan-Lusztig polynomials $a_{\lambda \mu}(q)$.
\end{theorem}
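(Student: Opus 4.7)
The plan is to compute both $J_{\lambda\mu}(q)$ and $a_{\lambda\mu}(q)$ explicitly in each linkage class and verify their equality directly, exploiting the structural results established earlier in the paper. By Theorem~\ref{main-1}, the Jantzen filtration is the unique Loewy filtration of $V(\lambda)$, so it coincides with the radical filtration; combined with the explicit submodule lattice of $V(\lambda)$ from Theorem~\ref{verma-stru}, this reads off every multiplicity $[V_i(\lambda):L(\mu)]$ and hence every coefficient of $J_{\lambda\mu}(q)$ in a completely explicit way.

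For the other side of the claimed identity, Theorem~\ref{main-3} supplies the $\fl$-module structure of $H_i(\fu^-, L(\mu))$ for every $\mu \in P_0^+$ and therefore all entries of $P(q) = (p_{\lambda\mu}(q))$. Since $P(q)$ is upper unitriangular and, within any linkage class, only finitely many weights contribute (the degree of atypicality for the exceptional Lie superalgebras being at most $1$), the inverse $A(q) = P(q)^{-1}$ can be written down in closed form by a small matrix calculation. I would then compare the resulting polynomials $a_{\lambda\mu}(q)$ with the polynomials $J_{\lambda\mu}(q)$ coming from the submodule lattice.

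The conceptual bridge between the two computations is the Jantzen sum formula from \cite{SZ3} in the deformed category $\cO^\fp(T)$, which identifies $\sum_{i \geq 1}\ch V^i(\lambda)$ as an explicit alternating sum of parabolic Verma characters indexed by the atypical roots of $\lambda$. This is a ``$q=1$'' shadow of the desired graded identity; together with the recursion inherent in the matrix inversion $P(q) A(q) = I$, it reduces the theorem to matching contributions Jordan--H\"older factor by factor. At $q=1$ both sides collapse to the composition multiplicity $[V(\lambda):L(\mu)]$, and the refined statement concerns the consistency of $q$-degrees.

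The main obstacle is this graded refinement: the analogous statement for classical category $\cO$ is the Jantzen conjecture, whose proof needs the Beilinson--Bernstein geometric input (mixed Hodge modules, purity), and the super-duality shortcut used in \cite{SZ3} for type I is not available for the exceptional Lie superalgebras. I plan to circumvent this obstacle by using rigidity (Theorem~\ref{main-1}) to identify the Jantzen filtration with the radical filtration, and then invoking the uniqueness of the latter, together with the explicit data of Theorem~\ref{verma-stru}, to pin down the $q$-degrees layer by layer in each of the three exceptional families.
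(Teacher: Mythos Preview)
Your proposal is correct and uses exactly the same ingredients as the paper---Theorem~\ref{main-1} to identify the Jantzen filtration with the radical filtration, Theorem~\ref{verma-stru} to read off the Loewy layers $V(\lambda)_i$, and Theorem~\ref{main-3} to supply the polynomials $p_{\mu\nu}(q)$. The only difference is in execution: rather than computing $A(q)=P(q)^{-1}$ separately and comparing, the paper verifies the defining relation $\sum_\mu J_{\lambda\mu}(q)p_{\mu\nu}(q)=\delta_{\lambda\nu}$ directly, rewriting the left side as $\sum_k q^k\sum_{j=0}^k(-1)^j[H_j(V(\lambda)_{k-j}):L^0(\nu)]$ and checking that the inner alternating sum vanishes for $k\ge1$ using the explicit homology in Theorem~\ref{main-3}; this avoids carrying out the infinite (though unitriangular) matrix inversion along a linkage class. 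Your detour through the Jantzen sum formula and the ``obstacle'' paragraph are unnecessary: once you have decided to compute both sides explicitly from Theorems~\ref{verma-stru} and~\ref{main-3}, no further conceptual input is needed, and in particular neither the paper nor your own plan actually uses the sum formula.
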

We will prove  the theorem in Section \ref{sect:proof-JKL}.

The remainder of the paper is devoted to the proofs of Theorems \ref{thm:rigid},  \ref{main-1} and \ref{main-2}.

\section{Classification of atypical weights}\label{Aty-weight}

We introduce some combinatorics for integral atypical weights in this section.
In particular, we define two sets $\atp^1$ and $\atp^2$, in each of which
weights are related by ``up-moves" and ``down-moves" defined by Definition \ref{dei1}.
These two sets partition the set of  $\fg$-integral and $\fg_0$-dominant atypical weights.
The significance of the combinatorics will become clear in Section \ref{sect:structure-thm}, where it enables
us to describe the structure of parabolic Verma modules quite uniformly.

\subsection{The up and down moves}

We start by observing several simple facts.
An element $\l=(\l_0\,|\,\l_1,...,\l_{I_1})\in\fh^*$ is $\fg_0$-integral (resp., $\fg_0$-integral dominant) if and only if the following conditions are satisfied:
\begin{eqnarray}\label{fg-0-integral}
D(2,1;a):&       &
\l_1,\l_2\in\Z\mbox{ (resp., $\Z_+$)},   \nonumber\\
F_4:&       &2\l_i,\l_i-\l_j\in\Z\mbox{ (resp., $\Z_+$) for }1\le i<j,   \nonumber\\
G_3:&       &\l_1-\l_2,\frac13(2\l_2-\l_3-\l_1)\in\Z\mbox{ (resp., $\Z_+$)}.
\end{eqnarray}
Also,  $\l\in P_0^+$ is $\fg$-integral if  and only if $\l_0\in\Z$ in the cases of $D(2,1;a)$ and $G_3$, and $2\l_0\in\Z_+$ in the case of $F_4$.

\begin{proposition}\label{prop1}\label{prop:single}
\begin{enumerate}\item Assume $\fg=D(2,1;a)$ or $F_4$.
Let $\l\in P_0^+$ be an atypical weight with two atypical roots. Then $\l^\rho_0=0$; in particular, $\l$ is $\fg$-integral.
\item If $\fg=G_3$, then each atypical weight $\l\in P_0^+$ has only one atypical root.
\end{enumerate}
\end{proposition}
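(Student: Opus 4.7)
My plan is a direct coordinate computation using the bilinear form \eqref{bilnear} and the explicit odd roots in \eqref{sys1}--\eqref{sys3}. For any isotropic $\gamma\in\Delta_1^+$ the atypicality condition $(\lambda^\rho,\gamma)=0$ becomes a single linear relation in the coordinates of $\lambda^\rho$; a weight with two atypical roots therefore gives two such relations, and the proof consists of examining the interaction of these relations with the $\fg_0$-dominance of $\lambda$, whose consequences on $\lambda^\rho$ can be read off from \eqref{rho====}.

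For part (1) write a putative atypical root as $\gamma=\delta+\sum_i s_i\epsilon_i$ (or as half of this for $F_4$) with $s_i\in\{\pm1\}$. Using \eqref{bilnear} the atypicality condition takes the form $c\lambda_0^\rho=\sum_i s_i(\epsilon_i,\epsilon_i)\lambda_i^\rho$ for an appropriate constant $c$ ($c=1+a$ for $D(2,1;a)$, $c=3$ for $F_4$). In the $D(2,1;a)$ case, $\rho=(-1\mid 1,1)$ and dominance give $\lambda_1^\rho,\lambda_2^\rho\ge1$; in the $F_4$ case, $\rho=(-\tfrac32\mid\tfrac52,\tfrac32,\tfrac12)$ and dominance give the strict chain $\lambda_1^\rho>\lambda_2^\rho>\lambda_3^\rho>0$. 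For a pair of distinct atypical roots let $I$ be the set of indices where the sign patterns differ. Subtracting the two atypicality equations yields $\sum_{i\in I}(s_i-s_i')(\epsilon_i,\epsilon_i)\lambda_i^\rho=0$; for $D(2,1;a)$, $|I|=1$ is excluded by the positivity of $\lambda_i^\rho$, forcing $|I|=2$, while for $F_4$, $|I|=1$ is excluded by positivity and $|I|=2$ by the strict chain (which would force $\lambda_i^\rho=\lambda_j^\rho$), so $|I|=3$. In each case the complementary indices contribute nothing to the sum of the two equations, which then reduces to $2c\lambda_0^\rho=0$; hence $\lambda_0^\rho=0$, pinning $\lambda_0$ to the unique value ($1$ for $D(2,1;a)$, $\tfrac32$ for $F_4$) that makes $\lambda$ $\fg$-integral.

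For part (2), the six isotropic odd positive roots of $G_3$ are $\gamma=\delta+\sum_i a_i\epsilon_i$ where $(a_1,a_2,a_3)$ has exactly one entry $+1$, one entry $-1$, and one entry $0$; in particular $\sum_i a_i=0$, so the atypicality condition $2\lambda_0^\rho=\sum_i a_i\lambda_i^\rho$ is invariant under the ambiguity \eqref{Other-weight-exp}, as are the $G_2$-dominance conditions $\lambda_1-\lambda_2\in\Z_+$ and $(2\lambda_2-\lambda_1-\lambda_3)/3\in\Z_+$. Combining these conditions with $\rho=(-\tfrac52\mid 2,1,-3)$ yields the strict bounds $\lambda_1^\rho-\lambda_2^\rho\ge 1$ and $\lambda_2^\rho-\lambda_3^\rho\ge 4$. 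Assume for contradiction that a pair of distinct atypical roots $\gamma,\gamma'$ exists. Subtracting their atypicality equations gives $\sum_i(a_i-a_i')\lambda_i^\rho=0$, where the difference vector $(a_i-a_i')$ ranges over a small finite set with entries in $\{0,\pm 1,\pm 2\}$ summing to zero. A brief enumeration shows that every nonzero such difference produces a linear relation among $\lambda_1^\rho,\lambda_2^\rho,\lambda_3^\rho$ ruled out by the strict bounds above (e.g.\ $\lambda_i^\rho=\lambda_j^\rho$ in the $|b|=2$ difference cases, or $2\lambda_i^\rho=\lambda_j^\rho+\lambda_k^\rho$ in the $|b|=4$ cases, each contradicting the strict chain), giving the required contradiction. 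The main obstacle is simply keeping the $G_3$ case analysis organised, but the sharp bound $\lambda_2^\rho-\lambda_3^\rho\ge 4$ coupled with $\lambda_1^\rho-\lambda_2^\rho\ge 1$ is more than enough to eliminate every nontrivial possibility in a short table.
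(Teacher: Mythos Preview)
Your approach to part~(1) is correct and is essentially identical to the paper's: subtract the two atypicality relations to kill the $\lambda_0^\rho$ term, use dominance to pin down which sign indices must differ, and then conclude $\lambda_0^\rho=0$. Your device of \emph{adding} the two relations once $|I|$ is determined is a clean way to reach the conclusion; the paper instead identifies the two roots explicitly and then reads off $\lambda_0^\rho=0$, but this amounts to the same thing.

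For part~(2), however, there is a genuine gap. The two bounds you extract,
\[
\lambda_1^\rho-\lambda_2^\rho\ge 1,\qquad \lambda_2^\rho-\lambda_3^\rho\ge 4,
\]
do \emph{not} eliminate every nonzero difference vector. The problematic case is the difference $(1,-2,1)$ (arising e.g.\ from the pair $\gamma=\delta+\es_1-\es_2$, $\gamma'=\delta+\es_2-\es_3$), which gives
\[
2\lambda_2^\rho=\lambda_1^\rho+\lambda_3^\rho,\qquad\text{i.e.}\qquad \lambda_1^\rho-\lambda_2^\rho=\lambda_2^\rho-\lambda_3^\rho.
\]
Your two bounds merely force this common value to be $\ge 4$, which is perfectly consistent: for instance $(\lambda_1^\rho,\lambda_2^\rho,\lambda_3^\rho)=(10,6,2)$ satisfies both of your inequalities and the relation simultaneously. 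So the assertion that these two bounds are ``more than enough to eliminate every nontrivial possibility'' is false as stated.

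The fix is immediate and uses the same dominance input you already invoked. The second $G_2$-dominance condition $\tfrac13(2\lambda_2-\lambda_1-\lambda_3)\in\Z_+$ translates under $\rho=(-\tfrac52\mid 2,1,-3)$ directly into a \emph{third} inequality
\[
2\lambda_2^\rho-\lambda_1^\rho-\lambda_3^\rho\ge 3,
\]
which you omitted; this immediately kills the case $2\lambda_2^\rho=\lambda_1^\rho+\lambda_3^\rho$. (This is precisely what the paper does, in different coordinates: after normalising $\lambda_3=0$ the bad case becomes $\lambda_1^\rho=2\lambda_2^\rho$, whence $(\lambda^\rho,2\es_2-\es_1-\es_3)=0$ and $\lambda$ is singular, contradicting $\lambda\in P_0^+$.) With this third inequality your enumeration goes through; without it the argument is incomplete.
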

\begin{proof}
Assume  $\g_\pm\in\D_1^+$ are atypical roots of $\l$ with $\g_+\ne\g_-$.
First suppose $\fg=D(2,1;a)$.
Then $\l_1^\rho,\l_2^\rho\ge1$. Assume  $\g_\pm=\d+x_\pm\es_1+y_\pm\es_2$ for some $x_\pm,y_\pm$ $\in\{\pm1\}$. Then from $(\l^\rho,\g_+-\g_-)=0$, we obtain
$x'\l_1^\rho+ay'\l_2^\rho=0$, where $x'=x_+-x_-,\,y'=y_+-y_-$ $\in\{0,\pm2\}$ and $(x',y')\ne(0,0)$. This is impossible if $a\notin\Q$ (the field of rational numbers). If $a\in\Q$, we obtain $x'=\sign{a}y'=\pm2$ (where $\sign{a}$ is the sign of $a$), and thus $\g_\pm=\d\pm(\es_1-\sign{a}\es_2)$,  $\l_1^\rho=\abs{a}\l_2^\rho$ (where $\abs{a}$ is the absolute value of $a$) and $\l_0^\rho=0$.

Now assume $\fg=F_4$.
Let $\l\in P_0^+$ be an atypical weight. Then $\l_1^\rho>\l_2^\rho>\l_3^\rho>0$. As above, from $(\l^\rho,\g_+-\g_-)=0$, we can obtain $x_1\l_1^\rho+x_2\l_2^\rho+x_3\l_3^\rho=0$ for some $x_i\in\{0,\pm1\}$. From this, we deduce $x_2=x_3=-x_1=\pm1$, and  $\l^\rho=(0\,|\,\l_1^\rho,\l_2^\rho,\l_3^\rho)$ with  $\g_\pm=\frac12(\d\pm(\es_1-\es_2-\es_3))$.
This proves (1).

Assume $\fg=G_3$. 
We can suppose $\l_3=0$ and take $\rho=(-\frac52\,|\,5,4,0)$ by \eqref{Other-weight-exp}. Then we have $\l_1,\l_2,\l_1-\l_2,\frac13(2\l_2-\l_1)\in\Z_+$ by \eqref{fg-0-integral}.
Thus $\l_1^\rho=\l_1+5>\l_2^\rho=\l_2+4>\l_3^\rho$ $=0$.
Assume  $\g_\pm=\d+\es_{i_\pm}-\es_{j_\pm}$ ($1\le i_\pm\ne j_\pm\le 3$).
If $i_-=i_+$ or $j_-=j_+$ or $(i_-,j_-)=(j_+,i_+)$, then from $(\l^\rho,\g_+-\g_-)=0$, we
would obtain respectively $\l_{j_+}^\rho=\l_{j_-}^\rho$ ($j_+\ne j_-$) or $\l_{i_+}^\rho=\l_{i_-}^\rho$ ($i_+\ne i_-$) or $\l_{i_+}^\rho=\l_{j_+}^\rho$ ($i_+\ne j_+$), a contradiction. Thus $(i_-,j_-)=(j_+,k)$ or $(k,i_+)$ (where $i_+,j_+,k$ are pairwise distinct), but we would then obtain
$\l_1^\rho=2\l_2^\rho$, which would imply that $\l$ is singular. Thus the atypical root of $\l$ is unique, and (2) is proven.
\end{proof}

\begin{definition}\label{first-def}\rm
An element $\l\in P_0$ is called {\em regular} if there exists $w\in W_0$ such that $w\cdot\l\in P^+_0$, and in this case,  denote $\l^+=w\cdot\l$.  If $\l$ is not regular, it is called {\em singular}.
\end{definition}
\begin{definition}\label{dei1}\rm
Assume that $\l\in P_0$ has an atypical root $\g\in\D_1^+$.  Let $k$ (resp. $k'$) be the smallest positive integer rendering
$\l+k\g$ (resp., $\l-k'\g$) regular, and define
\[
\l\nex_\g =(\l+k\g)^+, \quad   \l\pre_\g=(\l-k'\g)^+ \quad \text{in $P^+_0$}.
\]
Call the procedure of obtaining $\l\nex_\g$ (resp. $\l\pre_\g$) from $\l$ an {\em up} $($resp. {\em down$)$ move along $\g$}.
If $\g$ is the only atypical root of $\l$, we simply write $\l\pre=\l\pre_\g,\,\l\nex=\l\nex_\g$.
\end{definition}

\begin{remark}\label{rem:central}\rm
For any $\nu\in\fh^*$, we denote by $\chi_\nu$ the central character determined by $\nu$.  If $\nu$ is typical,
then $\chi_\mu=\chi_\nu$ if and only if $\mu=w\cdot\nu$ for some $w\in W$. If $\nu$ is atypical, then
$\chi_\mu=\chi_\nu$ if and only if there exist atypical elements $\mu_i\in \fh^*$, $\g_i\in\D_{\rm aty}(\mu_i)$,
$t_i\in \C$, and $w_i\in W$ with $i=0, 1, \dots, k$ for some $k<\infty$ such that
\[
\mu_0=\nu, \quad \mu_{i+1}=w_i\cdot(\mu_i+t_i\g_i) \ \  \text{for all $i<k$}, \quad \mu_{k+1}=\mu.
\]
\end{remark}

\begin{remark}\rm\label{AnoRemark} It follows from Remark \ref{rem:central} that both $\l\nex_\g$ and $\l\pre_\g$ correspond to the same central character as $\l$.
\end{remark}

\begin{remark}\rm\label{rem:multi-moves}
Repeated applications of up (resp. down) moves to $\l$ produce the weights $(\l+k\g)^+$ (resp.  $(\l-k\g)^+$) for all $k>0$ such that $\l+k\g$ (resp.  $\l-k\g$) are regular.
\end{remark}

\subsection{Description of $\atp^1$ and $\atp^2$}
Now we define $\atp^1$ and $\atp^2$ for the exceptional Lie superalgebras case by case.

\subsubsection{The case $D(2,1;a)$}\label{sub-D21}
Take $\mu=-\rho=(1\,|\,-1,-1)$ (i.e., $\mu^\rho=0$), which is a singular atypical weight with an atypical root $\g=\d-\es_1-\es_2$ (in fact one can take $\g$ to be any root in $\D_1^+$). We set $\l^1=\mu{\,}\nex_{ \g}=(2\,|\,0,0),\,\l^{-1}=\mu{\,}\pre_{ \g}=0$. In general, for any $i\in\Z^*:=\Z\bs\{0\}$, we let
\begin{equation}\label{lambda-i}
\l\wen{i}= \left\{\begin{array}{l l}(i+1\,|\,i-1,i-1), &\mbox{if \ $i>0$},\\
(i+1\,|\,-i-1,-i-1), &\mbox{if $i<0$},
\end{array}
\right.
\end{equation}
and denote $\atp^1=\{\l^i\,|\,i\in\Z^*\}$.  Then one has
\begin{equation}\label{up-down-si}
(\l\wen{i})\nex=\l\wen{i+1}\ (i\ne-1),\ \ (\l\wen{-1})\nex
=\l\wen{1},\ \ (\l\wen{i})^{\si_0}=\l\wen{-i},\ \ \ (\l\nex)\pre=\l.
\end{equation}
[There is a slight difference between the notation here and that in \cite{SZ4}. Here $\l\wen{0}$ is undefined and the weight $\l\wen{i}$ for $i\le-1$ corresponds to  $\l^{(i+1)}$ in \cite{SZ4}.]

If $a\notin\Q$, we set $\atp^2=\emptyset$. Now assume $a=\frac p q$ for some coprime integers $p\ne0$ and $q>0$ (and $p\ne-q$).  For any fixed $x\in\Z_+\bs\{0\}$, we take $\l^0_\pm=\l^0$ to be the weight such that
\begin{equation}\label{l0-D21a)}
(\l^0)^\rho=\big(0\,\big|\,\abs{p}x,qx\big), \mbox{ \ i.e., }\l^0=\big(1\,|\,\abs{p}x-1,qx-1\big),
\end{equation}
which is a $\fg$-integral dominant atypical weight with two atypical roots $\g_\pm=\d\pm(\es_1-\sign{p}\es_2)$.
We use Definition \ref{dei1} to define
\begin{equation}\label{D21-lpm1}
\l_\pm^1=(\l^0)\nex_{\g_\pm}, \quad  \l_\pm^{-1}=(\l^0)\pre_{\g_\mp}.
\end{equation}
Then $ \l^1_+ = (\l^{-1}_+)^{\si_0} $,  $ \l^1_- = (\l^{-1}_-)^{\si_0}$,  and
\begin{eqnarray}\label{D21-l0}
\begin{aligned}
 \l^1_+    &=\left\{  \begin{array}{ll}
\big(2\,\big|\,\abs{p}x,qx - \sign{p} - 1\big)   &\mbox{if }x > 1\mbox{ or }q > 1\mbox{ or }p < 0,\\[4pt]
(3\,|\,p+1,0)   &\mbox{if }x=q=1,\,p>0,
\end{array}\right.\\
\l^1_-    &=\left\{  \begin{array}{ll}
\big(2\,\big|\,\abs{p}x - 2,qx + \sign{p} - 1\big)   &\!\!\!\mbox{if }x\! >\! 1\mbox{ or }p \!> \!1\mbox{ or }p\! <\! -1,\,q\! > \!1,\\[4pt]
(3\,|\,1,q+2\sign{p})   &\!\!\!\mbox{if }x \!=\! p\! =\! 1\mbox{ or }x\! =\! -p\! = \!1,\,q \!>\! 2,\\[4pt]
(4\,|\,1,0)   &\!\!\!\mbox{if }x=-p=1,\,q=2.
\end{array}\right.
\end{aligned}
\end{eqnarray}
 Now for $i\ge 2$, we define \begin{equation}\label{d21-li}\l_{\pm}^i=(\l^{i-1}_\pm)\nex,\ \ \l_{\pm}^{-i}=(\l^{1-i}_\pm)\pre.\end{equation}
Then $\l_\pm^i$'s are  $\fg$-integral and $\fg_0$-dominant atypical weights and  $(\l_\pm^i)^{\si_0}=\l_\pm^{-i}$.  We let  $\atp^2$ be the set
consisting of the weights $\l^0=\l^0_\pm$ and $\l^i_{\pm}$ ($i\in\Z^*$) for all $x\in\Z_+\bs\{0\}$.
\subsubsection{The case $F_4$}\label{sub-F4}

Let $ x\in\Z_+\bs\{0\}$ be fixed, and set  (cf.~\eqref{l0-D21a)}) \begin{equation}\label{mu===}\mu^\rho=(0\,|\,x,x,0), \mbox{ \ i.e., } \mu=\Big(\frac32\,\Big|\,x-\frac52,x-\frac32,\frac12\Big).\end{equation} Note that $\mu$ is a singular atypical weight
 with an atypical root 
 $\g=\frac12(\d+\es_1-\es_2-\es_3)$ (in fact one can take $\g$ to be any $\frac12(\d\pm(\es_1-\es_2)\pm\es_3)\in\D_1^+$ and obtain the same $\l^i$). Set $\l^1=\mu{\,}\nex_{ \g}$, $\l^{-1}=\mu{\,}\pre_{ \g}$, which are given by
\begin{equation}\label{l-1-F4-2}
\begin{aligned}
\l^1 &= \left\{  \begin{array}{ll}(2\,|\,x - 2,x - 2,0)   &\mbox{if }x \ge 2,\\[4pt]
(3\,|\,0,0,0)   &\mbox{if }x = 1,
\end{array}\right.\\
\l^{-1} &= \left\{  \begin{array}{ll}(1\,|\,x - 2,x - 2,0)   &\mbox{if }x \ge 2,\\[4pt]
0   &\mbox{if }x = 1.
\end{array}\right.
\end{aligned}
\end{equation}
For all $i\ge2$, let $\l^i=(\l^{i-1})\nex,\,\l^{-i}=(\l^{1-i})\pre$ (note that $\l^0$ is not defined). Then \eqref{up-down-si} holds.
Let $\atp^1$ be the set of all $\l^i$ with $i\in\Z^*$ and $x\in \Z_+\bs\{0\}$.

Let $1\le a_3<a_2\in\Z_+$ be fixed integers and set $a_1=a_2+a_3$ (thus $a_2\ge a_3+1\ge2$ and $a_1\ge3$). We take $\l^0_\pm=\l\wen{0}$ to be the weight such that (cf.~\eqref{l0-D21a)} and \eqref{mu===})\begin{equation}\label{l0-F4} (\l\wen{0})^\rho=(0\,|\,a_1,a_2,a_3), \mbox{ \ i.e., }\l^0=\Big(\frac32\,\Big|\,a_1-\frac52,a_2-\frac32,a_3-\frac12\Big),\end{equation} which is an atypical  $\fg$-integral dominant weight with two atypical roots
$\g_\pm:=\frac12(\d\pm(\es_1-\es_2-\es_3))$.
We define $\l^{\pm1}_\pm$ as in \eqref{D21-lpm1}. Then
\begin{equation}\label{l-1-F4-1}
\begin{aligned}
\l^1_+&=(2\,|\,a_1-2,a_2-2,a_2-1),\\
 \l_+^{-1}&=(1\,|\,a_1-2,a_2-2,a_2-1),\\
\l^1_- &= \left\{  \begin{array}{ll}(2\mid  a_1 - 3,a_2 - 1,a_3)   &\mbox{if }a_3 \ge 2,\\[4pt]
(\frac52\mid  a_2 - \frac32,a_2 - \frac32,\frac12)   &\mbox{if }a_3 = 1,
\end{array}\right.\\
\l^{-1}_- &= \left\{  \begin{array}{ll}(1\mid  a_1 - 3,a_2 - 1,a_3)   &\mbox{if }a_3 \ge 2,\\[4pt]
(\frac12\mid  a_2 - \frac32,a_2 - \frac32,\frac12)   &\mbox{if }a_3 = 1.
\end{array}\right.
\end{aligned}
\end{equation}
Now define $\l_\pm^{\pm i}$ for $i\ge2$ by \eqref{d21-li}, and let $\atp^2=\bigcup\left(\{\l^0=\l^0_\pm\}\cup\{\l^i_\pm\mid i\in\Z^*\}\right)$,
where the union is over all $a_i$ satisfying the given condition.

\subsubsection{The case $G_3$}\label{sub-G3}
As in the proof of Proposition \ref{prop1}, we always assume $\l_3=0$ for any weight $\l$.
Fix $ x\in\Z_+$, similar to \eqref{mu===}, we denote \begin{equation}\label{G3-mu}
\mu^\rho=\Big(\frac12\,\Big|\,3x+2,3x+1,0\Big), \mbox{ \ i.e., }\mu=(3\mid  3x-3,3x-3,0),\end{equation}
which is an atypical weight with the unique atypical root $\g=\d+\es_1-\es_2$.
If $x\ge1$, then $\mu$ is a $\fg$-integral dominant weight, and we set $\l^1=\mu$. If $x=0$, then $\mu,\mu+\g$ are singular (cf.~Definition \ref{first-def}), and we set $\l^1=\mu{}\nex=(\mu+2\g)^+=(5\mid  0,0,0)$.
Define $\l^{-1}:=(\l^1)^{\si_0}=(\l^1)\pre=(2\mid  3x-3,3x-3,0)$ if $x\ge1$ or $\l^{-1}=0$ otherwise.

For $i\ge2$, we define $\l^i=(\l^{i-1})\nex$ and $\l^{-i}=(\l^{1-i})\pre$ (there is no $\l^0$). Then \eqref{up-down-si} holds.
The following fact will be needed later:
\begin{equation}\label{l0-ige3}\l^i_0>\l^1_0\ge3\mbox{ \ for all \ }i\ge2.\end{equation}
 Denote  by $\atp^1$ the set of $\fg$-integral and $\fg_0$-dominant atypical weights obtained in this way for all $x\in\Z_+$, and  set $\atp^2=\emptyset$.

\subsection{A classification of  integral atypical weights}
Let $\fg$ be an exceptional Lie superalgebra. We have the following
classification of $\fg$-integral atypical weights.
\begin{proposition}\label{aty-weight-F4}
Let $\atp:=\atp^1\cup \atp^2$. Then every $\fg$-integral and $\fg_0$-dominant atypical weight belongs to $\atp$. Furthermore, a weight $\l^i\in \atp^1$ is $\fg$-integral dominant if and only if $i=-1$ or $i\ge1$, and
a weight $\l^i_\pm\in \atp^2$ is $\fg$-integral dominant if and only if $i\ge0$.
\end{proposition}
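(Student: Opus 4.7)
The plan is a case-by-case analysis for $\fg \in \{D(2,1;a),\,F_4,\,G_3\}$: first enumerate all $\fg$-integral and $\fg_0$-dominant atypical weights according to their atypical roots, then match them to the explicit parametrizations of $\atp^1$ and $\atp^2$ in Section \ref{Aty-weight}, and finally check condition \eqref{eq:int-domin}.

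Fix an atypical $\l \in P \cap P_0^+$. By Proposition \ref{prop:single}, $\l$ has exactly one atypical root when $\fg = G_3$, and has one or two atypical roots otherwise; in the latter case Proposition \ref{prop:single}(1) forces $\l_0^\rho = 0$, and for $D(2,1;a)$ this only happens when $a \in \Q$. I would solve the equations $(\l^\rho, \g) = 0$ (or $(\l^\rho, \g_+) = (\l^\rho, \g_-) = 0$ in the double-atypical case) in the explicit coordinates of \eqref{bilnear}, combining them with $\fg_0$-dominance and $\fg$-integrality \eqref{fg-0-integral}. This pins down $\l^\rho$ up to one integer parameter in the single-root case, and gives the two-parameter families \eqref{l0-D21a)} and \eqref{l0-F4} in the double-root case for $D(2,1;a)$ and $F_4$ respectively.

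Next, by Remark \ref{AnoRemark}, successive up- and down-moves preserve the central character. I would verify that, starting from a distinguished base weight ($\mu$ in the single-atypical case, $\l^0$ in the double-atypical case), iterated application of $\nex$ and $\pre$ traverses precisely the list of $\fg_0$-dominant weights in that block: each up-move strictly increases the $\l_0$-coordinate (since every odd positive root has positive $\d$-component), while preserving the atypical-root condition. Hence every $\fg_0$-dominant atypical weight in the block appears as some $\l^i$ or $\l^i_\pm$ in \eqref{lambda-i}, \eqref{D21-l0}, \eqref{l-1-F4-2}, \eqref{l-1-F4-1} and the corresponding $G_3$ expressions, giving $\l \in \atp = \atp^1 \cup \atp^2$.

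Finally, the integral-dominance statement is a direct comparison of each explicit weight against \eqref{eq:int-domin}. For $\atp^1$, weights $\l^i$ with $i \ge 1$ satisfy $\l_0 \ge 2$ (using \eqref{l0-ige3} for $G_3$), placing them in $P^+$; the weight $\l^{-1}$ is either $0$ or meets a boundary condition listed in \eqref{eq:int-domin}; while for $i \le -2$ the value of $\l_0$ falls below the required threshold, so $\l^i \notin P^+$. Analogous inspection of $\atp^2$ shows $\l^0$ and $\l^i_\pm$ with $i \ge 1$ belong to $P^+$ and rules out $\l^{-i}_\pm$ for $i \ge 1$. The main obstacle is the bookkeeping for the boundary cases of \eqref{D21-l0} and \eqref{l-1-F4-1} (for example $x = q = 1,\,p > 0$ in $D(2,1;a)$, or $a_3 = 1$ in $F_4$), where the naive translate $\l^0 + k\g$ lies on a Weyl wall and must be replaced by $\l^0 + k'\g$ with the smallest $k' > k$ making the translate regular, then reflected into $P_0^+$ by an element of $W_0$. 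A systematic inspection of the required $W_0$-reflection disposes of these special cases.
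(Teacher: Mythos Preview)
Your proposal is correct and follows essentially the same overall strategy as the paper, but the paper's execution is considerably more streamlined. Instead of solving the atypicality equations case by case and then verifying that up/down moves exhaust the resulting lists, the paper observes that for any atypical $\l$ with atypical root $\g'$ there is a \emph{unique} $k\in\Z$ such that $\nu^\rho:=\l^\rho+k\g'$ has $0$-th coordinate equal to $0$ (or $\tfrac12$ for $G_3$); then either $\nu$ is singular or some $\si\in W_0$ sends it to a $\fg_0$-dominant weight, and in either case one lands precisely on one of the base weights $\mu$ or $\l^0$ used to seed $\atp^1$ or $\atp^2$. Remark \ref{rem:multi-moves} then immediately gives $\l\in\atp$, bypassing the block-by-block traversal you sketch. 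Your approach buys explicitness (you see all the weights concretely), while the paper's shift-to-base-level trick avoids the boundary-case bookkeeping you flag as the main obstacle. The second statement is handled identically in both: direct inspection of \eqref{eq:int-domin}.
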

\begin{proof}
Suppose $\l$ is a $\fg$-integral and $\fg_0$-dominant atypical weight with an atypical root $\g'\in\D_1^+$. Then $\l_0\in\Z$ if $\fg=D(2,1;a)$ or $G_3$, and $\l_0\in\frac12\Z$ otherwise. Note that there exists a unique $k\in\Z$ such that the $0$-th coordinate
of $\nu^\rho:=\l^\rho+k\g$ is $\nu_0^\rho=0$ (if $\fg=D(2,1;a)$ or $F_4$) or $\nu_0^\rho=\frac12$ (if $\fg=G_3$).
Then either $\mu:=\nu=\nu^\rho-\rho$ is singular with atypical root $\g'$, or there exists a unique $\si\in W_0$ such that $\mu:=\si(\nu^\rho)-\rho$ is $\fg_0$-integral dominant with atypical root $\g=\si(\g')$. Thus $\mu$ is one of the weights used to generate elements in $\atp^1$ or $\atp^2$. It then follows from Remark \ref{rem:multi-moves} that $\l\in\atp$.
The second statement can be easily proven by inspecting \eqref{eq:int-domin}.
\end{proof}

Denote by $\atp^+= \atp\cap P^+$ the set of $\fg$-integral dominant atypical weights.  Let
\begin{equation}\label{Tail-aty}
\atp^t=\text{the set of all $\l^i, \, \l^i_\pm\in\atp$ with $ i<0$.}
\end{equation}
Every atypical weight $\l$ in $\atp^t$ satisfies $\l_0^\rho<0$, and in this case every $\g\in\D_{\rm aty}(\l)$ is called a {\it tail atypical root} of $\l$ (note that $\g\in\D_1^\pm$ if $\l\in\atp^1$ or $\l=\l^i_+\in\atp^2$). For $\l\in\atp\bs\atp^t$, an atypical root $\g\in\D_{\rm aty}(\l)$
 is called a {\it non-tail atypical root} of $\l$.

\section{Structure of parabolic Verma modules}

Let $\fg$ be an exceptional Lie superalgebra with the distinguished maximal parabolic subalgebra $\fp$ as defined
in Section \ref{sect:parabolic}.

\subsection{Primitive weight graphs}\label{graph}

We briefly recall  from \cite{SZ3} (see also \cite{SHK}) the notion of primitive weight graphs, which is needed in later sections.

Let $V$ be an object in $\cO^\fp$.
A nonzero $\fg_{0}$-highest weight vector $v\in V$ is called a {\em
primitive vector} if there exists a $\fg$-submodule $M$ of $V$ such
that $v\notin M$ but $\fu v\in M$. If we can take $M=0$, then
$v$ is a $\fg$-highest weight vector. The weight of a primitive vector is
called a {\em primitive weight}.
For a primitive weight $\mu$ of a $\fg$-module $V$, we shall use
$v_\mu$ to denote a nonzero primitive vector of weight $\mu$ which
generates an indecomposable submodule. Two primitive vectors
are regarded as the same if they
generate the same indecomposable submodule.

Denote by $P(V)$ the
multi-set of primitive weights of $V$, where the multiplicity of a primitive weight
$\mu$ is equal to the dimension of the
subspace spanned by all the primitive vectors with weight $\mu$.

For $\mu,\nu\in
P(V)$, if $\mu\ne\nu$ and $v_\nu\in \U(\fg)v_\mu$, we say that $\nu$
is {\em derived from} $\mu$ and write $\nu\dlar\mu$  or
$\mu\drar\nu$. If $\mu\drar\nu$ and there exists no $\l\in P(V)$
such that $\mu\drar\l\drar\nu$, then we say that $\nu$ is {\it
directly derived from} $\mu$ and write $\mu\rrar\nu$ or
$\nu\llar\mu$.
\begin{definition}\label{defi6.1}\cite{SZ3} \rm
We associate $P(V)$ with a directed graph, still denoted by $P(V)$,
in the following way: the vertices of the graph are elements of the
multi-set $P(V)$ (i.e., a primitive weight of multiplicity $m$ corresponds to $m$ distinct vertices).
Two vertices $\l$ and $\mu$ are connected by a
single directed edge pointing toward $\mu$ if and only if $\mu$ is
derived from $\l$. We shall call this graph the {\it primitive
weight graph of  $V$}.

The {\em skeleton} of the primitive weight
graph is the subgraph containing all the vertices and is such
that two vertices $\l$ and $\mu$ are connected by a single directed
edge pointing to $\mu$ if and only if $\mu$ is directly derived
from $\l$. In this case we say that the two weights are linked.
\end{definition}
Note that a primitive weight graph is uniquely determined by its skeleton.

A  {\it full subgraph} $S$ of $P(V)$ is a subset of $P(V)$ which
contains all the edges linking vertices of $S$.
If for any $\mu,\nu\in S$, we have that $\mu\drar\eta\drar\nu$ implies $\eta\in
S$, we call the full subgraph $S$ {\em closed}. It is clear that a
module is indecomposable if and only if its primitive weight graph
is {\it connected} (in the usual sense), and that a full subgraph of
$P(V)$ corresponds to a subquotient of $V$ if and only if it is
closed.
\begin{notation}\label{nota1}\rm
For a directed graph $\G$, we denote by $M(\G)$ any module with
primitive weight graph $\G$ if such a module exists. 
\end{notation}

Observe the following facts: If $\G$ is a
closed full subgraph of $P(V)$, then $M(\G)$ always exists, which is
a subquotient of $V$. Also, the primitive weight graph of $V^\vee$
is obtained from that of $V$ by reserving the directions of the edges.

\begin{remark}\label{rem:set} \rm
We may regard $P(V)$ as a set in such a way that any member $\l$ of multiplicity $m_\l>1$
will be considered as $m_\l$ distinct elements.
\end{remark}

\subsection{The $\fg_0$-highest weights in parabolic Verma modules}
We describe the set of $\fg_0$-highest weights in the atypical parabolic Verma module
$V(\l)$. This contains the set $P(V(\l))$ of primitive weights of $V(\l)$.

\begin{remark}\rm
We will show presently that every parabolic Verma module $V(\l)$ in $\cO^\fp$ is multiplicity free, namely, $\mu_\l=1$ for all $\l\in P(V)$.
\end{remark}

Given $\l\in P_0^+$, we define
\begin{eqnarray}\label{P-lambda+}
P_\l^+ &:= \{\mu \in  P_0^+\mid  \mu \preccurlyeq \l, \ \chi_\mu=\chi_\l\}.
\end{eqnarray}
If $M$ is a highest weight module for $\fg$ with highest weight $\l$, let
\begin{eqnarray}\label{P-0-V-l}
P_0(M)&:=\{\mu\in P^+_\l \mid  \text{$\mu$ is a
$\fg_0$-highest weight in $M$}\}.
\end{eqnarray}
Then $P(V(\l))\subset P_0(V(\l))\subset P_\l^+$ as every primitive weight $\mu$ of $V(\l)$ must correspond to the same central character as $\l$ does itself, i.e.,  $\chi_\mu=\chi_\l$.
Let
\begin{eqnarray} \label{a-l-m}
\begin{aligned}
a_{\l,\mu}=[V(\l): L(\mu)], \quad b_{\l,\mu}=[V(\l): L^{0}(\mu)],
\end{aligned}
\end{eqnarray}
where  $V(\l)$ is regarded as a $\fg_0$-module by restriction in the second formula.
Clearly $ a_{\l,\mu}\le b_{\l,\mu}$  for all $\mu\in P(V(\l))$.
%
%

In the following discussion, when $\l\in\atp^2$, we may assume $\l=\l_+^i$ for some $i$
as the case $\l=\l_-^i$ is exactly the same. Note that symbols $\l^0,\,\l^0_+,\l^0_-$  all denote the same weight $\l^0\in\atp^2$.
The following simple facts will be frequently used (cf.~\eqref{up-down-si} and \eqref{d21-li}):  for all $i$,
\begin{equation}\label{l-sigma==}
\begin{aligned}
\l\pre&=\left\{\begin{array}{ll}\l^{i-1}&\!\!\mbox{if $\l=\l^i\in\atp^1$ with $i\ne0,1$},\\[2pt]
\l^{-1}&\!\!\mbox{if $\l=\l^1\in\atp^1$},\\[2pt]
\l^{i-1}_\pm&\!\!\mbox{if $\l=\l^i_\pm\in\atp^2$ with $i\ne0$,}
\end{array}\right. \\
\l^{\si_0}&=\left\{\begin{array}{ll}\l^{-i}&\!\!\mbox{if }\l=\l^i\in\atp^1,\\[6pt]
\l^{-i}_\pm&\!\!\mbox{if }\l=\l^i_\pm\in\atp^2.\end{array}\right.
\end{aligned}
\end{equation}
\begin{lemma}\label{lemma-possible-primitive}
Let $\l\in\atp$. Then
\begin{enumerate}
\item
\begin{equation*}
P_\l^+=\left\{\begin{array}{lll}
\{\l^j\mid 0\ne j\le i\}&\mbox{if }\l=\l^i\in\atp^1,\\[4pt]
\{\l_+^j\mid j\le i\}\cup\{\l_-^{k}\mid  k<0\}&\mbox{if }\l=\l^i_+\in\atp^2\mbox{ with }i\ge0,\\[4pt]
\{\l^j_+\mid  j\le i\}&\mbox{if }\l=\l^i_+\in\atp^2\mbox{ with }i<0,\end{array}\right.
\end{equation*}
and similarly for $\l=\l^i_-\in\atp^2$.
\item\label{P-0-subset}
\begin{itemize}\item
if $\l=\l^0\in\atp^2$, then $P_0(V(\l))\subset\{\l^0,\,\l^{-1}_\pm\}$ and $b_{\l^0,\l^{-1}_\pm}\le1$;
\item  if $\l=\l^1\in\atp^1$, then $P_0(V(\l))\subset \{\l^1,\l^{-1},\l^{-2}\}$;
\item if $\l$ is not as above, let
$
\Omega_\l:=\{\l,\,\l\pre,\,
\l^{\si_0},\,(\l^{\si_0})\pre,\,(\l^{\si_0})\nex\}.
$
Then $($cf.~\eqref{l-sigma==}$)$
\begin{equation}\label{P0vl}  P_0(V(\l))\subset\Omega_\l, \ \mbox{ \ and \
$b_{\l,\l\pre}\le1$.} \end{equation}
In particular, if $\l\ne\l^0$ has a tail atypical root, then $P_0(V(\l))\subset\{\l,\l\pre\}$.
\end{itemize}
%
\end{enumerate}
\end{lemma}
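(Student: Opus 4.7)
The plan is to establish (1) combinatorially from the classification in Section~\ref{Aty-weight}, then derive (2) by a $\fg_0$-module analysis of $V(\l)$ combined with (1). For (1), by Remark~\ref{rem:central} the integral atypical weights sharing the central character $\chi_\l$ form a single linkage class generated by iterated atypical-root shifts modulo the Weyl group. This is precisely the mechanism by which the chains of $\atp^1$ and $\atp^2$ were constructed in Section~\ref{Aty-weight}, so $P^+_\l$ is the sub-chain of weights $\preccurlyeq\l$ in the chain containing $\l$. Using \eqref{lambda-i}--\eqref{d21-li} and the analogous formulae for $F_4$ and $G_3$, one checks directly in simple-root coordinates that $\l^j\preccurlyeq\l^i$ iff $0\ne j\le i$ (in $\atp^1$), with the corresponding statement in $\atp^2$. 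The asymmetry between $i\ge 0$ and $i<0$ for $\l^i_+\in\atp^2$ reflects the two-branch structure at $\l^0$: both atypical roots $\g_\pm$ give descending moves producing incomparable minima $\l^{-1}_\pm$.

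For (2), use the PBW isomorphism of $\fg_0$-modules
\[
V(\l)\;\cong\;U(\fu^-)\otimes L^0(\l)\;\cong\;\Lambda(\fg_{-1})\otimes\C[f_\th]\otimes L^0(\l),
\]
noting that $\C[f_\th]$ contributes trivial $\fg_0^{\rm ss}$-summands shifted by multiples of $-\th$. The weights of $V(\l)$ are therefore of the form $\l-\sum_{\b\in\D_1^+}n_\b\b-m\th$ with $n_\b\in\{0,1\}$ and $m\in\Z_+$, combined with the $\fg_0$-weights of $L^0(\l)$. Any $\mu\in P_0(V(\l))$ must both lie in $P^+_\l$ (by (1)) and be realised as such a weight, and these two constraints pin $\mu$ down. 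The survivors are: $\l$ itself at the top; $\l\pre$, arising from a single odd-root subtraction inside $\Lambda(\fg_{-1})$ paired with $v_\l$; $\l^{\si_0}$, coming from a suitable power of $f_\th$ acting on $v_\l$ to produce a $\fg_0$-singular vector; and $(\l^{\si_0})\pre$, $(\l^{\si_0})\nex$ from combining odd-root subtractions with $f_\th$-powers. Deeper members of $P^+_\l$, such as $\l^{-j}$ with $j$ large in the $\atp^1$ case, are excluded because $\l-\mu$ would require more odd-root subtractions than $\Lambda(\fg_{-1})$ admits, even after compensating with $\C[f_\th]$. For tail weights $\l\in\atp^t$ the reflection $\l^{\si_0}$ lies above $\l$, hence not in $P^+_\l$, so $\Omega_\l$ collapses to $\{\l,\l\pre\}$. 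The special cases $\l=\l^0\in\atp^2$ (where the seed is singular and both $\g_\pm$ contribute descendants) and $\l=\l^1\in\atp^1$ (where the atypicality seed is the zero weight, yielding an extra descendant $\l^{-2}$) are handled by direct inspection.

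The multiplicity bounds $b_{\l,\l\pre}\le 1$ and $b_{\l^0,\l^{-1}_\pm}\le 1$ follow because the corresponding $\fg_0$-weight spaces of $V(\l)$ contain a unique $\fg_0$-singular vector each: the difference $\l-\l\pre$ admits a unique expression as a single atypical odd root plus a $\fg_0$-positive-root combination, and similarly each $\g_\pm$ contributes an independent one-dimensional singular vector at $\l^{-1}_\pm$. The main obstacle will be the case-by-case bookkeeping for $\fg\in\{D(2,1;a),F_4,G_3\}$: the Levi $\fg_0^{\rm ss}$ and its representation $\fg_{-1}$ differ in each case (an $\mathfrak{sl}_2\oplus\mathfrak{sl}_2$-outer tensor product, an $\mathfrak{so}(7)$-spin module, and a $G_2$ $7$-dimensional representation respectively), so the decomposition of $\Lambda(\fg_{-1})\otimes L^0(\l)$ must be analysed separately and compared against $P^+_\l$ in each case; this is most delicate for $F_4$ and $G_3$ due to the size and intricacy of these exterior algebras.
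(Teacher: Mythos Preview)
Your approach to (1) is essentially the same as the paper's: both verify the description of $P^+_\l$ directly from Remark~\ref{rem:central} and the explicit chain construction.

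For (2), the paper takes a different and considerably shorter route than your proposed full $\fg_0$-decomposition of $\Lambda(\fg_{-1})\otimes\C[f_\th]\otimes L^0(\l)$. The paper introduces a total order on the PBW basis $B=\{f_\Theta\}$ of $U(\fu^-)$ and proves that in any $\fg_0$-highest weight vector $v_\mu=\sum b_iv_i\in V(\l)$ (with $b_i\in B$, $v_i\in L^0(\l)$), the leading term $b_1v_1$ must be a \emph{prime term}, i.e.\ $v_1\in\C v_\l$. This single observation forces
\[
\l-\mu=\theta_0\th+\sum_{\a\in\D_1^+}\theta_\a\a,\qquad \theta_\a\in\{0,1\},
\]
hence $|\l_i-\mu_i|\le 2$ (or $\le 3$ for $G_3$) in the $\es_i$-coordinates. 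Intersecting this weight-box with $P^+_\l$ from part (1) is a short case check yielding $\Omega_\l$, with no tensor-product decomposition required. Your route would work in principle, but the $\fg_0$-decomposition of $\Lambda(\fg_{-1})\otimes L^0(\l)$ for all $\l$ is exactly the kind of computation the paper later calls ``difficult in general'' (cf.~\eqref{b-l-mu=}).

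Your multiplicity argument has a genuine gap. You assert that $b_{\l,\l\pre}\le1$ because ``$\l-\l\pre$ admits a unique expression as a single atypical odd root plus a $\fg_0$-positive-root combination.'' Even granting uniqueness of such an expression, this does not by itself bound the number of $\fg_0$-singular vectors of weight $\l\pre$: a singular vector is a linear combination $\sum f_\Theta v_i$ with $v_i$ ranging over the full weight space of $L^0(\l)$, not just $\C v_\l$, so many PBW monomials can contribute. The paper closes this gap with its Fact~3: two $\fg_0$-highest weight vectors with the same prime terms must coincide. Since the prime terms are parametrised precisely by the expressions $\l-\mu=\theta_0\th+\sum\theta_\a\a$, uniqueness of $\Theta$ (which the paper verifies in the relevant cases) then forces uniqueness of the singular vector. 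Without either this prime-term machinery or a complete $\fg_0$-decomposition, your argument for $b_{\l,\l\pre}\le 1$ is incomplete.
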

\begin{proof}
(1) Part (1) can be verified directly by using Remark \ref{rem:central} and definitions of $\l^i,$ $\l_\pm^i$.

(2) It follows from the PBW
Theorem that $U(\fu^-)=U(\fg_{-1}\oplus\fg_{-2})$ has a basis
\begin{equation}\label{B====}
B=\Big\{f_{\Theta}=f_\th^{\theta_0}\mbox{$ \prod_{\a\in\D_1^+}$}f_\a^{\theta_\a}\,\Big|\,\Theta\in\Z_+\times\{0,1\}^{r}\Big\},
\end{equation}
where $r=\#\D_1^+$, $\Theta=\{\theta_0,\theta_\a\}_{\a\in\D_1^+}$, and the product in $f_\Theta$ is ordered so that
$f_\a$ is placed before $f_\b$ if $\a>\b$ for any $\a,\b\in\D_1^+$.
Define a total order on $B$ by
\equan{order-B}{f_{\Theta}>f_{\Theta'}\ \ \Longleftrightarrow\ \
|\Theta|>|\Theta'|\mbox{ \ or \ }|\Theta|=|\Theta'| \mbox{ but
}\Theta>\Theta',}
where $|\Theta|=\theta_0+\sum_{\a\in\D_1^+}\theta_\a$
is the {\it level} of $\Theta$, and $\Z_+\times\{0,1\}^{r}$ is ordered lexicographically.
Recall that a nonzero vector $v\in V(\l)$ can be uniquely written as
\equa{write-v}{v= b_1v_1+\cdots+b_tv_t,\ \ b_i\in B,\
b_1>b_2>\cdots, \ 0\ne v_i\in L^0(\l).} We call $b_1v_1$ the {\it
leading term} (cf.~\cite[\S5]{SHK}). A term $b_iv_i$ is called a
{\it prime term} if $v_i\in\C v_\l$. Note that a vector $v$ may have
zero or more than one prime terms. One can immediately prove the following facts
(cf.~\cite[Lemmas 5.1 and 5.2]{SHK} and \cite[Lemmas 3.5 and 3.6]{SZ4}).
\setcounter{Ofact}{1}\begin{fact}\rm\hangindent9ex\hangafter1
Let $v=gu$ for some $u\in V(\l)$ and $g\in U(\fu^-)$.\\
(1) If $u$
has no prime term then $v$ has no prime term.

\hspace*{6ex} \hangindent13ex\hangafter1(2) Let
$v'=gu',\,u'\in V(\l)$. If $u,u'$ have the same prime terms then
$v,v' $ have the same prime terms.
\vspace*{-7pt}
\end{fact}
\begin{Ofact}\rm\hangindent9ex\hangafter1
Let $v_\mu\in V(\l)$ be
a $\fg_0$-highest weight vector of weight $\mu$. Then
\equa{g0-highest}{\l-\mu=\theta_0\th+ \sum_{\a\in\D_1^+}\theta_\a\a,}
for some $\Theta=\{\theta_0,\theta_\a\}_{\a\in\D_1^+}\in\Z_+\times\{0,1\}^r.$ Furthermore, the
leading term $b_1v_1$ of $v_\mu$ must be a prime term.
\vspace*{-7pt}\end{Ofact}
\begin{Ofact}\rm\hangindent8.5ex\hangafter1 \label{gl0-highest-2}Suppose $v'_\mu=\sum^{t'}_{i=1}b'_iv'_i$ is
another $\fg_0$-highest weight vector with weight $\mu$. If all
prime terms of $v_\mu$ are the same as those of $v'_\mu$, then
$v_\mu=v'_\mu$.
\end{Ofact}

For any given $\mu\in P_0(V(\l))$, it follows from \eqref{g0-highest} that
\begin{eqnarray}\label{condi-mu-l}\nonumber
&         &\abs{\l_i-\mu_i}\le2,\,i=1,...,I_1\mbox{ \ if }\fg=D(2,1;a)\mbox{ or }F_4,\\
&         &\abs{\l_i-\mu_i}\le3,\,i=1,2\mbox{ \ \ \ \ \ \ if }\fg=G_3.
\end{eqnarray}
For $\fg=G_3$, we always assume that $\l_3=\mu_3=0$ (cf.~\eqref{Other-weight-exp}), and when an odd positive root like $\a=\d+\es_1-\es_3$ appears in the right-hand side of
\eqref{g0-highest}, we change it to $\d+2\es_1+\es_2$, as both represent the same weight by \eqref{Other-weight-exp}.
From \eqref{condi-mu-l}, we can verify directly (case by case) that $\mu\in\{\l^0,\l^{-1}_\pm\}$ if $\l=\l^0\in\atp^2$, and
$\mu\in\Omega_\l$ otherwise. Furthermore, in each of the following three cases:  (i) $\l^1\ne\l\in\atp^1$ and $\mu=\l\pre$, (ii)  $\l^0\ne\l\in\atp^2$ and $\mu=\l\pre$,  (iii) $\l=\l^0\in\atp^2$ and $\mu=\l^{-1}_\pm$ (in all these cases, $\l_0^\rho,\mu_0^\rho$ have the same sign or $\l_0^\rho$ is zero), the $\Theta$ in \eqref{g0-highest} is unique. By Fact 3, we obtain $b_{\l,\mu}\le1$. This proves Lemma \ref{lemma-possible-primitive}(2).
\end{proof}

\subsection{Structure theorem for parabolic Verma modules}\label{sect:structure-thm}

Now we prove the structure theorem of parabolic Verma modules. Recall that $P^+_0$ (resp. $P^+$) is the set of weights
which are integral dominant with respect to $\fg_0$ (resp. $\fg$).

\begin{theorem}\label{verma-stru}
Let $\fg$ be an exceptional Lie superalgebra, and $V(\l)$ be the parabolic Verma module with highest weight $\l\in P_0^+$.
\begin{enumerate}
\item \label{part1} Assume that $\l$ is typical.  Then $V(\l)$ is irreducible if $\l\not\in P^+$,
or has the primitive weight graph $\l\to\l^{\si_0}$ if $\l\in P^+$.
\item \label{part2} Assume that $\l$ is atypical.  If $\l\not\in P^+$, or  $\l\in P^+$ but has a tail atypical root, then $V(\l)$ has the primitive weight graph $\l\to\l\pre$.
\item \label{part3} Assume that $\l\in P^+$ is atypical with a non-tail atypical root $($i.e., $\l=\l^i\in$ $\atp^1$ with $i\ge1$ or $\l=\l^i_\pm\in\atp^2$ with $i\ge0\,)$, then the skeleton of the primitive weight graph for $V(\l)$ is one of the following directed graphs $($cf.~\eqref{l-sigma==} for symbols appearing in the last graph$)$:
\equa{Stru-Kac-mod}{\!\!\!
\begin{array}{c}        \l=\l\wen{\ssc 0}\in\atp^2\put(-55,-5){$\vector(-1,-2){10}$}\put(-55,-5){$\vector(1,-2){10}$}\put(-75,-40){$\l^{-1}_+$}\put(-45,-40){$\l^{-1}_-$}
\end{array}\!\! ,    \ \
\begin{array}{c}   \l=\l\wen{\ssc1}\put(0,0){$\in\atp^1$}\\[-4pt]\downarrow\\\l\wen{\ssc-2}\\ \end{array}\ \ \ \  \ \  ,\ \ \ \ \
\raisebox{15pt}{\mbox{$\begin{array}{c}            \l=\l\wen{\ssc2}\put(0,0){$\in\atp^1$}\\
\downarrow\put(0,0){$\searrow$}\\[-2pt]\ \ \,\l\wen{\ssc-1}\,\ \l\wen{\ssc1}\end{array}
\put(-17,-15){$\vector(-1,-2){8}$}\put(-32,-42){$\l\wen{\ssc-2}$}\put(-38,-15){$\vector(1,-2){8}$}
\put(-42,10){$\vector(-1,-3){20}$}\put(-68,-60){$\l\wen{\ssc-3}$}\put(-50,-47){$\vector(2,1){16}$}$}}\
\ \ \ , \ \ \ \ \ \ \ \ \ \ \raisebox{25pt}{\mbox{$\begin{array}{c}       \!\!\!\! \l\ \ \ \ \\
\phantom{\downarrow}\!\!\!\!\!\searrow\\[-2pt]\ \ \ \phantom{\l\wen{\ssc-1}}\!\! \l\pre\end{array}
\put(-17,-15){$\vector(-1,-2){8}$}
\put(-32,-42){$\l^{\si_0}$}
\put(-36,10){$\vector(-1,-3){18}$}\put(-68,-60){$(\l^{\si_0})\pre$}\put(-50,-47){$\vector(2,1){16}$}\put(-85,-76){\footnotesize $\sc
\l=\l^i\in\atp^1$ with $\sc i\ge3,$ or}\put(-85,-88){\footnotesize $\sc\l=\l^i_\pm\in\atp^2$ with $\sc i\ge1$}$}}
\ .}\end{enumerate}
\end{theorem}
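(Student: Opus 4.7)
The plan is to treat the typical case by character subtraction and the atypical cases by combining the restrictions from Lemma \ref{lemma-possible-primitive} with explicit constructions of primitive vectors using the PBW basis \eqref{B====}.

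For the typical part (\ref{part1}), central character considerations (Remark \ref{rem:central}) restrict primitive weights of $V(\l)$ to the $W$-orbit of $\l$, within which the only $\fg_0$-dominant elements are $\l$ and $\l^{\si_0}$. A direct inspection of \eqref{eq:int-domin} and \eqref{l-si-0} shows that $\l^{\si_0}\in P_0^+$ precisely when $\l\in P^+$. When $\l\notin P^+$ this leaves $\l$ as the only candidate, so $V(\l)$ is irreducible. When $\l\in P^+$, Proposition \ref{typical-prop} provides $L(\l)=K(\l)$ together with the Weyl-Kac character formula; subtracting from \eqref{char-Verma-for} yields $\ch V(\l)-\ch L(\l)=\ch L(\l^{\si_0})$, giving the skeleton $\l\to\l^{\si_0}$.

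For the atypical cases in parts (\ref{part2}) and (\ref{part3}), Lemma \ref{lemma-possible-primitive}(2) already restricts $P_0(V(\l))$ to an explicit finite subset of $\Omega_\l$, with each candidate appearing with $\fg_0$-multiplicity at most one. What remains is to decide which of these candidates actually supports a primitive vector and to determine the submodule containment relations. I would produce each claimed primitive vector by making an ansatz in the basis \eqref{B====} of the level forced by \eqref{g0-highest}, and solving the finitely many equations $e_i v_\mu = 0$ modulo the appropriate submodule for $i=0,\ldots,I_2$. Existence at the predicted atypical weights follows from degeneracy of the contravariant form \eqref{form} at these weights, while uniqueness (given the prime term) is guaranteed by Fact 3 in the proof of Lemma \ref{lemma-possible-primitive}.

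Given these primitive vectors, the edges of the skeleton are read off by testing which vector lies in the $U(\fg)$-submodule generated by which, using the prime-term analysis of Facts 1--3 combined with the contravariant form. The symmetry $\si_0$ swapping $\l\leftrightarrow\l^{\si_0}$ together with the down-moves of Definition \ref{dei1} cut the case analysis roughly in half; passing between the diagrams in \eqref{Stru-Kac-mod} is then a matter of enumerating which members of $\Omega_\l$ lie in $P_0^+$ for each $\l\in\atp^+$. The main obstacle is the case-by-case explicit construction of primitive vectors for all three exceptional superalgebras $D(2,1;a)$, $F_4$, and $G_3$, where the Serre-type relations and the combinatorics of $\D_1^+$ differ substantially. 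A secondary subtlety is ruling out spurious ``horizontal'' edges between primitive weights of equal $0$-th coordinate --- for instance $\l\wen{-1}\not\to\l\wen{1}$ in the third diagram of \eqref{Stru-Kac-mod} --- which must be verified by showing that the two primitive vectors at $\l\wen{\pm 1}$ generate distinct maximal submodules, via character comparison using \eqref{char-Verma-for} together with the multiplicity bound $b_{\l,\l\pre}\le 1$.
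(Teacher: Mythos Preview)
Your treatment of part~(\ref{part1}) by character subtraction is a legitimate alternative to the paper's argument; the paper instead shows uniqueness of the primitive vector $v'_{\l^{\si_0}}$ directly by observing that it must lie in $U(\fg)f_\th^{\bar\l_0+1}v_\l$ and then pinning down its form as $\prod_{\a\in\D_1^+}e_\a\cdot f_\th^{\bar\l_0+1}v_\l$.  Your route is shorter here but relies on Proposition~\ref{typical-prop}, while the paper's explicit description of the primitive vector is reused later in part~(\ref{part3}).

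For parts (\ref{part2}) and (\ref{part3}) there is a genuine gap.  The assertion that ``existence at the predicted atypical weights follows from degeneracy of the contravariant form'' only tells you that the maximal submodule is nonzero; it does not locate primitive vectors at any \emph{specific} weight in $\Omega_\l$.  Your proposal to then solve $e_i v_\mu=0$ by ansatz amounts to a very large case-by-case computation across three superalgebras that you have not carried out, and more seriously it does not address the edges: knowing that $v_{\l^{i-1}}$ and $v_{\l^{-i}}$ are both singular does not by itself tell you that $\l^{i-1}\to\l^{-i}$.  The paper avoids almost all of this direct computation by three structural ideas you are missing.  First, an inductive argument in $i$: for $i\gg0$ the set $\Omega_\l$ meets the level constraint \eqref{condi-mu-l} only at $\l\pre$, so $a_{\l,\l\pre}=1$ is forced; then a maximal-counterexample argument, feeding the composition factor $L(\l^{i_0})$ into $V(\l^{i_0+1})$ and invoking \eqref{P0vl}, rules out failure at any finite $i_0$.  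Second, the observation that an infinite-dimensional parabolic Verma module cannot contain a finite-dimensional submodule: whenever a finite-dimensional $L(\mu)$ with $\mu\in P^+$ appears as a composition factor, there must be an edge $\mu\to\nu$ to some $\nu\notin P^+$, and Lemma~\ref{lemma-possible-primitive} then forces $\nu$ uniquely.  This is how the edges $\l^0\to\l^{-1}_\pm$ and $\l^{i-1}\to\l^{-i}$ are obtained.  Third, the Kac-submodule structure: any primitive vector of non-dominant weight lies in $U(\fg)f_\th^{\bar\l_0+1}v_\l$, which both bounds its weight above by $\l^{\si_0}$ (eliminating $(\l^{\si_0})\nex$ from the graph) and gives its explicit form $\prod_{\a\in S}e_\a\cdot f_\th^{\bar\l_0+1}v_\l$, from which the remaining edge $\l^{-i-1}\to\l^{-i}$ follows immediately.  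Without these devices your proposal does not yield the graphs in \eqref{Stru-Kac-mod}.
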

\begin{proof} (1)
If $\l$ is typical and $\fg$-integral dominant, then using Remark \ref{rem:central}, one obtains $P_\l^+=\{\l, \l^{\si_0}\}$
from the definition \eqref{P-lambda+}. Since $V(\l)$ has at least two composition factors (see \eqref{Kac-module}),
$\l^{\si_0}$ appears in $P(V(\l))$. Now we determine its multiplicity.
By \eqref{Kac-module} and Proposition \ref{typical-prop}, the maximal submodule of $V(\l)$ is generated by $w_1:=f_\th^{\bar\l_0+1}v_\l$. So, a primitive vector with
weight $\l^{\si_0}$ has the following form
\begin{equation}\label{v'-llll}v'_{\l^{\si_0}}=uw_1=uf_\th^{\bar\l_0+1}v_\l\mbox{ \ for some }u\in U(\fg).
\end{equation}
Decompose $U(\fg)$ into
$U(\fg)=U(\fg^{-})U(\fg_1)U(\fg^{\ssc\ge0}_{_{\sc\bar0}})$,
where $\fg^{-}=\oplus_{\a\in\D^+}\fg^{-\a}$, $\fg_{_{\sc\bar0}}^{\ssc\ge0}=\fh\oplus\oplus_{\a\in\D_{\bar0}^+}\fg^\a$, and where we have adopted the convention that for any subspace $M$ of $\fg$, we use $U(M)$ to denote the subspace of  $U(\fg)$
spanned by PBW-monomials with respect to a fixed ordered basis of $M$. By \eqref{v'-llll} and the fact that $w_1$ is a $\fg_{\bar0}$-highest weight vector, we can choose $u\in$ $ U(\fg^{-})U(\fg_1)$. Since $w_1$ has weight $\mu:=\l-(\bar\l_0+1)\th$, and $\l^{\si_0}-\mu=\sum_{\a\in\D_1^+}\a=2\rho_1$, which is the maximal weight of $U(\fg_{1})=\wedge\fg_1$,
we see that $u$ has to be in $U(\fg_{1})$ with weight $2\rho_1$, i.e.,
\begin{equation}\label{u=====}
\mbox{$\dis u= \prod_{\a\in\D_1^+}e_\a $ \ \ up to a nonzero scalar factor,}
\end{equation}
where the order of the product is as specified in \eqref{B====}. [Actually the order does not matter.]
This proves $v'_{\l^{\si_0}}$ is unique, i.e., we have the graph $\l\to\l^{\si_0}$ in this case.

If $\l$ is typical but not $\fg$-integral dominant, then either $P_\l^+=\{\l\}$,  or $P_\l^+=\{\l,\l^{\si_0}\}$ if $\fg=G_3$ and with $\l_0$ being a half integer (otherwise $\l-\l^{\si_0}\notin\Z_+\Pi$, cf.~\eqref{part-order}). In the latter case, $\l$ is not $\fg$-integral, and one can verify that $V(\l)$ does not have a $\fg_{\bar0}$-highest weight vector with weight $\l^{\si_0}$. This proves (1).

(2) Next assume that $\l$ is atypical but not $\fg$-integral dominant,  or $\l$ is a $\fg$-integral dominant atypical weight with a tail atypical root. Note
from the proof of Lemma \ref{lemma-possible-primitive} that \eqref{P0vl}  holds even if $\l$ is not $\fg$-integral.
One can verify that either $V(\l)$ does not have a $\fg_{\bar0}$-highest weight vector with weight in $\Omega_\l\bs\{\l,\l\pre\}$, or
elements in $\Omega_\l\bs\{\l,\l\pre\}$  are not $\preccurlyeq\l$. Thus $P(V(\l))\subset\{\l,\l\pre\}$.
Let $w_2:=\prod_{\a\in\D_1^+}f_\a v_\l$ (with product being ordered as in \eqref{B====}),  which has weight $\l-2\rho_1$. One can prove that up to a nonzero scalar multiple (cf.~\cite[Equation (2.9)]{SZ3}),
\begin{equation}\label{WWWWW}
\mbox{$\dis \prod_{\a\in\D_1^+}e_\a w_2= \prod_{\a\in\D_1^+}e_\a \prod_{\a\in\D_1^+}f_\a v_\l  = \prod_{\a\in\D_1^+}(\l+\rho,\a)v_\l=0,
$}\end{equation}
where the last equality follows from the atypicality of $\l$. This implies that there is a
$\fg$-highest weight vector with weight $\prec\l$. Indeed, let $u\in U(\fg_{1})$ be the element with a maximal weight such that $w_3:=uw_2\ne0$, then $w_3$ is a $\fg$-highest weight vector. Since $P(V(\l))\subset\{\l,\l\pre\}$, the weight of $w_3$ must be $\l\pre$. Thus $1\le a_{\l,\l\pre}\le b_{\l,\l\pre}\le1$, and we have the graph $\l\to\l\pre$.
This proves  (2).

(3) Finally assume $\l$ is an atypical $\fg$-integral dominant weight with a non-tail atypical root.  Then $\l=\l^i\in\atp^1$ with $i\ge1$ or $\l=\l^i_\pm\in\atp^2$ with $i\ge0$.
For any $\fg_0$-integral dominant
weights $\l,\mu$ with $\mu\preccurlyeq\l$, we obtain from the character formulae for $\ch V(\l)$ and $\ch L^0(\mu)$ that
\begin{eqnarray} \label{b-l-mu=}
\dis b_{\l,\mu}= \sum_{(S, p, w)} \sign{w},
\end{eqnarray}
where the sum is over all triples $(S, p, w)\in \{S\subset\D_1^+\}\times \Z_+\times W_0$ such that
$\mu=w\cdot\nu$ with $\nu:=\l-\sum_{\a\in S}\a-p\th$  being  regular.
Note that these conditions in particular imply $\#S=|\l-\mu|-p$,
where $|\l-\mu|$ is the {\it relative level} which is  $2(\l_0-\mu_0)$ if $\fg=F_4$ and $\l_0-\mu_0$ otherwise. Although it is difficult to use \eqref{b-l-mu=} to compute $b_{\l,\mu}$ in general,  one can nevertheless obtain the following 
result
\begin{eqnarray}\label{b-l-mu===}
\nonumber&          &
b_{\l,\l\pre}=1 \quad \mbox{\ \ if $\l = \l^i \in \atp^1,\,i \ge 2$ \ or \ $\l = \l^i_\pm \in \atp^2,\,i \ge 1$},
\\\nonumber
&          &
b_{\l,\l^{-1}}=1 \quad \mbox{if $\l=\l^2\in\atp^1$},\\
&          &
 b_{\l,\l^{-1}}=0 \quad \mbox{if $\l=\l^1\in\atp^1$}.
\end{eqnarray}
[Some simplification takes place when $\mu=\l\pre$ or $\mu=\l^{-1}$, as the relative level $|\l-\mu|$ is comparatively small. For instance in the first case it is controlled by \eqref{condi-mu-l}.]

We now use this information and Lemma \ref{lemma-possible-primitive} to prove several claims, which will then imply the theorem.
\setcounter{claim}{0}
\begin{claim}\label{claim1}
For any weight $\mu$ appearing in \eqref{Stru-Kac-mod}, which is either $\fg$-integral dominant or appears in the first graph, we have $a_{\l,\mu}=1$.
\end{claim}
We already see $a_{\l,\mu}\le b_{\l,\mu}=1$. To prove $a_{\l,\mu}=1$,
first suppose $\l=\l^{i}$ or $\l^i_\pm$ for $i\gg0$. Then the only possible $\mu\prec\l$ is $\mu=\l\pre$, which is $\l^{i-1}$ or $\l^{i-1}_\pm$.
As in the proof of \eqref{WWWWW}, we see that there exists a $\fg$-highest weight vector with weight $\nu$ satisfying $\l-2\rho_1\preccurlyeq\nu\prec\l$ (this condition  implies that $\nu$ must be $\fg$-integral dominant when $\l_0\gg0$), and by Lemma \ref{lemma-possible-primitive}(2), $\l\pre\,$ is the only possible such $\mu$.
Thus $\l\pre\in P(V(\l^{(i)}))$ for all $i\gg0$.

Now assume conversely there exists some $i_0>0$ such that for $\l=\l^{i_0}$ or $\l^{i_0}_\pm$, $a_{\l,\mu}=0$ for some  said $\mu$ in the claim (i.e., either $\mu=\l\pre$, or else $\mu=\l^{-1}\in\atp^1$ with $i_0=2$). By the previous paragraph, we can choose $i_0$ to be maximal. Then we have the following facts:
\begin{itemize}
\item $V(\l)$ contains a $\fg_0$-highest weight $\mu$ by \eqref{b-l-mu===};
\item  any composition factor $L(\eta)$ of $V(\l)$  other than $L(\l)$
does not  contain  a $\fg_0$-highest weight $\mu$ simply because either $\eta\preccurlyeq\l\pre=\mu$ or else $i_0=2,\,\l=\l^2\in\atp^1,\,\eta=\l^{1},$ $\mu=\l^{-1}$ (in this latter case $L(\eta)=L(\l^1)$ cannot contain a $\fg_0$-highest weight $\mu=\l^{-1}$  as $V(\eta)=V(\l^1)$ does not contain one by \eqref{b-l-mu===}).
\end{itemize}
These facts imply that only the composition factor $L(\l)$ of $V(\l)$ contains  a $\fg_0$-highest weight $\mu$. Since $L(\l)$ is a composition factor of $V(\l\nex{\sc\,})$ by the maximal choice of $i_0$, we deduce that $V(\l\nex{\sc\,})$ contains a  $\fg_0$-highest weight $\mu$. However $\mu\notin\Omega_{\l^\nex}$, a contradiction with \eqref{P0vl}. This prove the claim except the case that $\mu$ appears in the first graph of \eqref{Stru-Kac-mod}.

To see $a_{\l^0,\l^{-1}_+}=1$, let us look at $V(\l^1_+)$, which contains the finite dimensional composition factor $L(\l^0)$ as we have just proved in the previous paragraph (note that $\l^0=(\l^1_+)\pre=(\l^1_-)\pre{\sc\,}$). Since the parabolic Verma module $V(\l^1_+)$ cannot contain a finite-dimensional submodule, we see that there must exist some weight $\nu\not\in P^+$ such that
\begin{equation}\label{unique-of-nu-in}
\l^0\drar\nu\mbox{ \ in the graph $P(V(\l^1_+))$}.
\end{equation}
Such a $\nu$ must be contained in $P(V(\l^1_+))$ and in $P(V(\l^0))$, this is because a module $M(\l^0\drar\nu)$ (cf.~Notation \ref{nota1}) with graph $\l^0\drar\nu$ must be a highest weight module with highest weight $\l^0$, and hence a quotient of $V(\l^0)$. By Lemma \ref{lemma-possible-primitive}(2), $\nu=\l^{-1}_+$ is the only possible weight. Thus
$a_{\l^0,\l^{-1}_+}=1$.
The uniqueness of $\nu$ in \eqref{unique-of-nu-in} in fact also implies \begin{equation}\label{l----0}\l^0\to\l^{-1}_+\mbox{ \ in the graph }P(V(\l^{0})),\end{equation}
i.e.,  this is the only possible connection between $\l^0$ and $\l^{-1}_+$ in $P(V(\l^0))$.
Similarly, considering $V(\l^1_-)$ instead of $V(\l^1_+)$ implies that we have $\l^0\to\l^{-1}_-$ in $P(V(\l^{0}))$.
This not only completes the proof of Claim \ref{claim1} but also proves the first graph of \eqref{Stru-Kac-mod}.

\begin{claim}\label{claim1+}
We have the second graph of \eqref{Stru-Kac-mod}.\end{claim}

In this case, $\l=\l^1\in\atp^1$, and we have $\Omega_\l=\{\l^1,\l^{-1},\l^{-2}\}$. However, $\l^{-1}\notin P(V(\l^1))$
by \eqref{b-l-mu===}, so $P(V(\l^1))\subset\{\l^1,\l^{-2}\}$ (thus equality must hold
since $V(\l^1)\ne L(\l^1)$). Let $v'_{\l^{-2}}\in V(\l^1)$ be any primitive vector with weight $\l^{-2}$. As in \eqref{v'-llll}, we have
 \begin{equation}\label{l-2v'}\mbox{ $v'_{\l^{-2}}=u_1w_1$ for some $u_1\in U(\fg^{-})U(\fg_1)$, where $w_1=f_\th^{\bar\l_0+1}v_\l$.}\end{equation}
Note from \eqref{v'-llll} that $w_1$ has weight $\l^{-1}-2\rho_1$, and so\begin{equation}\label{wei-u1}\mbox{ $u_1$ has weight $2\rho_1-(\l^{-1}-\l^{-2})$.}\end{equation} Also note
that $\prod_{\a\in\D_1^+}e_\a w_1=0$ as otherwise it would be a $\fg$-highest weight vector with weight $\l^{-1}$ (cf.~\eqref{u=====}).
Let $x\in U(\fg_1)$ be an element with maximal weight $\xi$ such that $xw_1\ne0$ (so $\xi_1:=2\rho_1-\xi\succ0$). By definition of $x$, the vector $xw_1$ is a $\fg$-highest weight vector with weight
$\eta:=\l^{-1}-\xi_1$ (so $\eta\in P(V(\l^1))$), thus the only possible $\eta$ is $\eta=\l^{-2}$. Hence $\xi=2\rho_1-(\l^{-1}-\l^{-2})$, this together with \eqref{wei-u1} proves that $u_1$ and $x$ has the same weight.
Then the unique choice of $x$ also shows that $u_1\in U(\fg_1)$. 
From Definition \ref{dei1}, we see that $\l^{-1}-\l^{-2}$ can be uniquely written as
a sum of distinct roots in $\D_1^+$, accordingly, the element in $U(\fg)$ with weight $\xi$ is unique up to a nonzero scalar factor. This proves that $u_1=x$ is unique up to a nonzero scalar factor (so $a_{\l^1,\l^{-2}}=1$), and we have the second graph of \eqref{Stru-Kac-mod}.\vskip5pt

From now on we shall consider the last two graphs of  \eqref{Stru-Kac-mod}, i.e., $\l=\l^i\in\atp^1$ with $i\ge2$ or $\l=\l^i_\pm\in\atp^2$ with $i\ge1$.
\begin{claim}\label{claim2--}
For any weight $\mu\notin P^+$ which does not appear in any of the last two graphs of \eqref{Stru-Kac-mod},
$a_{\l,\mu}=0$.\end{claim}

Note that the only possible weight in $\Omega_\l$ which does not appear in the graphs is the weight $\mu=(\l^{\si_0})\nex=\l^{1-i}$ with $\l=\l^i\in\atp^1$ and $i\ge3$ or $\mu=(\l^{\si_0})\nex=\l^{1-i}_\pm$ with $\l=\l^i_\pm\in\atp^2$ and $i\ge2$. In either case, $(\l^{\si_0})\nex\not\in P^+$.
Assume $v'_{(\l^{\si_0})\nex}$ is a primitive  vector with weight $(\l^{\si_0})\nex$. Then $v'_{(\l^{\si_0})\nex}\in U(\fg)f_\th^{\bar\l_0+1}v_\l$, but
the maximal possible weight of $U(\fg)f_\th^{\bar\l_0+1}v_\l$ is  $\l^{\si_0}$ (as we have seen in \eqref{v'-llll}), which is $\prec(\l^{\si_0})\nex$, a contradiction. Thus $a_{\l,\mu}=0$.

\begin{claim}\label{claim2-}
If a weight $\mu$ appearing in any one of the last two graphs of \eqref{Stru-Kac-mod} satisfies the conditions $\mu\ne\l$ and $\mu\in P^+$, then we have the subgraph $\l\to\mu$ in the graph $P(V(\l))$.
\end{claim}

We have $a_{\l,\mu}=1$ by Claim \ref{claim1}. If there exists some weight $\nu$ with $\l\drar\nu\drar\mu$  in the graph  $P(V(\l))$, then $\nu\not\in P^+$ by
Lemma \ref{lemma-possible-primitive}(2)
(otherwise we would have $\l=\l^2\in\atp^1$ and either $\nu=\l^1,\,\mu=\l^{-1}$ or else $\nu=\l^{-1},\,\mu=\l^{1}$, but we already know there does not exist a graph $\l^1\drar\l^{-1}$ nor $\l^{-1}\drar\l^1$ as a module $M(\l^1\drar\l^{-1})$ with the first graph would be  a quotient of $V(\l^1)$ and the second would be some kind of dual of the first, namely, $M(\l^{-1}\drar\l^1)
=M(\l^1\drar\l^{-1})^\vee$).
Therefore the primitive vector $v'_\nu$ with weight $\nu$ must be
 in $U(\fg)f_\th^{\bar\l_0+1}v_\l$ and thus, so is the primitive  vector $v'_\mu$ with weight $\mu$. This means that $L(\mu)$ is not a composition factor of the Kac-module $K(\l)$, a contradiction with the maximality of $K(\l)$.\vskip5pt

From now on we assume $\l=\l^i\in\atp^1$ with $i\ge2$ as the proof for the case $\l=\l^i_\pm\in\atp^2$ with $i\ge1$ is analogous.

\begin{claim}\label{claim2}We have the subgraph $\l^{i-1}\to\l^{-i}$ in the graph  $P(V(\l^i))$.
\end{claim}
Recall from \eqref{l-sigma==}  that $\l^{-i}=\l^{\si_0}$. First suppose $v'_{\l^{-i}}$ is a primitive vector with weight $\l^{-i}$, which must be in $U(\fg)f_\th^{\bar\l_0+1}v_\l$.
Thus as in the proof of \eqref{v'-llll} and \eqref{u=====}, such $v'_{\l^{-i}}$ is unique, i.e., $a_{\l^i,\l^{-i}}\le1$. We already know $\l^{i-1}=\l\pre\in P(V(\l))$,
 thus there must exist some weight $\nu\not\in P^+$ such that
\begin{equation}\label{uniq---l}
\mbox{$\l^{i-1}\drar\nu$ \ in $P(V(\l^i))$,}
\end{equation}
 this is because $L(\l^{i-1})$ is finite dimensional and $V(\l^i)$ does not contain a finite dimensional submodule. As in the proof of \eqref{l----0},
such a weight $\nu$ is in $P(V(\l^i))\cap P(V(\l^{i-1}))$ 
 and thus must be $\l^{-i}$ (thus $a_{\l^i,\l^{-i}}\ge1$), and furthermore, the uniqueness of $\nu$ in \eqref{uniq---l} also implies
 \begin{equation}\label{l----0+++}\l^{i-1}\to\l^{-i}\mbox{ \ in the graph  $P(V(\l^i))$,  and no $\eta$ with $\l^{-i}\drar\eta$}.\end{equation}

\begin{claim}\label{claim3}We have  the subgraph $\l^{i}\to\l^{-i-1}\to\l^{-i}$ in the graph  $P(V(\l^i))$.\end{claim}

Loot at the graph  $P(V(\l^{i+1}))$, by Claim \ref{claim2} or \eqref{l----0+++}, we have a subgraph $\l^i\to\l^{-i-1}$. Since a module $M(\l^i\to\l^{-i-1})$ with  graph $\l^i\to\l^{-i-1}$ must be a quotient of $V(\l^i)$, we obtain that
 $\l^{-i-1}\in P(V(\l^i))$ and $\l^i\to\l^{-1-i}$ is a subgraph of $P(V(\l^i))$. Now assume $v'_{\l^{-1-i}}$ is a primitive vector with weight ${\l^{-1-i}}$. We want to prove
 $v'_{\l^{-1-i}}$ is unique.

  Let $M_1$ be the submodule of $V(\l^i)$ generated by the primitive vector with weight $\l^{-i}$, and set $M=V(\l^i)/M_1$.
Note from \eqref{l----0+++} that $M_1$ does not have a primitive weight $\l^{-i-1}$ (in fact $M_1$ is the simple module $L_{\l^{-i}}$ by \eqref{l----0+++}), thus  $v'_{\l^{-1-i}}$ uniquely corresponds to a primitive vector (also denoted by the same symbol)
in $P(M)$. Now as in the proof of Claim \ref{claim1+}, such a primitive vector in $P(M)$ is unique, and it must have the form
 $v'_{\l^{-1-i}}=u_1w_1$ for some $u_1\in U(\fg_1)$ (cf.~\eqref{l-2v'}), i.e.,  $u_1=\prod_{\a\in S}e_\a$ for some subset $S$ of $\D_1^+$. Now return to the parabolic Verma module $V(\l^i)$, we obtain that a primitive vector with weight $\l^{-1-i}$ is unique, which is $v'_{\l^{-1-i}}=\prod_{\a\in S}e_\a w_1$.
 Since the primitive vector with weight $\l^{-i}$ is  $v'_{\l^{-i}}=\prod_{\a\in\D_1^+}e_\a w_1$ (as in the proof of Claim \ref{claim2}),
 which can be then written as $v'_{\l^{-i}}=u_2v'_{\l^{-i-1}}$ with $u_2=\prod_{\a\in\D_1^+\bs S}e_\a$, i.e.,
$\l^{-i-1}\drar\l^{-i}$ in $P(V(\l^i))$. As there is no possible primitive weight sitting in between
$\l^{-i-1}$ and $\l^{-i}$, we have $\l^{-i-1}\to\l^{-i}$.

We have completed the proof of Theorem \ref{verma-stru}.
\end{proof}

\section{Proofs of main theorems on Jantzen filtration}\label{filtration-sec1}

\subsection{Rigidity of parabolic Verma modules}\label{sect:rigid}

Note that one can read the radical filtration of a module in $\cO^\fp$ off its primitive weight graph.
To see this, let $P(V)$ be the multi-set of primitive weights of a module $V\in \cO^\fp$, which will be regarded
as a set in the way explained in Remark \ref{rem:set}.
We decompose the set into a disjoint union of subsets
$P(V)=\cup_{i\ge 0} P(V)_i$ in the following way.
The subset $P(V)_0$ consists of the primitive weights which are not
derived from any weights. A primitive weight $\mu$ belongs to $P(V)_i$ if
in the skeleton of $P(V)$,
the longest of the oriented paths from weights in $P(V)_0$ to $\mu$ has $i$ arrows.
Let $V^{(i)}$ be the submodule of $V$ generated by all primitive
vectors in $\cup_{j\ge i}P(V)_j$.  Then we obtain the following
filtration for $V$,
\begin{eqnarray}\label{FILT---}
V=V^{(0)}\supset V^{(1)}\supset\cdots \supset
V^{(\ell)}\supset V^{(\ell+1)}=0,
\end{eqnarray}
where the {\it length} $\ell$ of the filtration  is the smallest non-negative integer such that $P(V)_{\ell+1}=\emptyset$.
We will say that elements of $P(V)_i$ are at level $i$.
\begin{lemma}\label{lem:graph} The filtration
\eqref{FILT---} is the radical filtration of $V$.  Furthermore, the consecutive quotients
of the filtration are given by
\[
V_i:= V^i/V^{i+1} = \bigoplus_{\l\in P(V)_i} L(\l), \quad i=0, 1, \dots, l.
\]
\end{lemma}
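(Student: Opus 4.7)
The plan is to identify the quotients $V^{(i)}/V^{(i+1)}$ with $\bigoplus_{\lambda\in P(V)_i} L(\lambda)$, and then to recognize this decomposition as exhibiting $V^{(i+1)}$ as $\mathrm{rad}(V^{(i)})$.

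First I would verify that within the subgraph of $P(V)$ on vertices at levels $\ge i$, the level-$i$ vertices are exactly the sources. Indeed, an arrow $\nu\to\lambda$ with $\lambda\in P(V)_i$ forces $\nu\in P(V)_{i-1}$, outside the subgraph. Next, for each such source $\lambda$, I would show that its primitive vector $v_\lambda$ descends to a $\fg$-highest weight vector in $V^{(i)}/V^{(i+1)}$ generating a copy of $L(\lambda)$. The key combinatorial input is that any primitive weight $\mu$ with $\lambda\drar\mu$ sits at strictly higher level, since any longest oriented path ending at $\mu$ extends one ending at $\lambda$ by at least one arrow. Decomposing the submodule $U(\fg)(\fu v_\lambda)$ into indecomposable pieces, each such piece is generated by a primitive vector of weight strictly derived from $\lambda$, hence at a level $\ge i+1$, hence contained in $V^{(i+1)}$. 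This forces $\fu\bar v_\lambda=0$, and by the same token any proper submodule of $U(\fg)\bar v_\lambda$ would contain a further primitive vector at a level $\ge i+1$, which is zero in the quotient; thus $U(\fg)\bar v_\lambda\cong L(\lambda)$.

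Since by definition $V^{(i)}$ is generated by the primitive vectors at levels $\ge i$ and those at levels $>i$ lie in $V^{(i+1)}$, the quotient $V^{(i)}/V^{(i+1)}$ is spanned $U(\fg)$-linearly by the images $\bar v_\lambda$ with $\lambda\in P(V)_i$; these generate mutually disjoint simple summands $L(\lambda)$, with multiplicity handled as in Remark \ref{rem:set}, yielding $V^{(i)}/V^{(i+1)}\cong\bigoplus_{\lambda\in P(V)_i}L(\lambda)$. Semisimplicity of this quotient automatically gives $\mathrm{rad}(V^{(i)})\subseteq V^{(i+1)}$; for the reverse inclusion, one observes that the decomposition above exhibits the \emph{largest} semisimple quotient of $V^{(i)}$, since any further simple summand would require a $\fg$-highest weight vector in $V^{(i)}$ of weight outside $P(V)_i$, impossible by the source characterization above. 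The main obstacle will be the rigorous bookkeeping in the second step: verifying that the $U(\fg)$-submodule generated by $\fu v_\lambda$ decomposes into indecomposable pieces whose generating primitive vectors all have weights strictly derived from $\lambda$, which is the precise content of the relation $\drar$ of Definition \ref{defi6.1}, but needs a little care in the presence of repeated primitive weights.
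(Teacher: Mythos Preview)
The paper's own proof is the single sentence ``This is obvious from the definition of a primitive weight graph,'' and your proposal is a correct, detailed unpacking of exactly that claim: identifying the level-$i$ vertices as the sources of the truncated graph, showing each source descends to a simple summand in the quotient, and verifying the radical characterization. One harmless slip: an arrow $\nu\to\lambda$ with $\lambda\in P(V)_i$ only forces $\nu$ to lie at some level $\le i-1$, not necessarily exactly $i-1$, but this does not affect your conclusion that $\nu$ lies outside the subgraph at levels $\ge i$.
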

\begin{proof}
This  is obvious from the definition of a primitive weight graph.
\end{proof}

\begin{example}\label{exmple-graph}\rm
Consider as an example a module $V$ with the skeleton of its primitive weight graph given by Figure \ref{fig1}.

\begin{figure}[h]
\begin{center}
\begin{picture}(160, 80)(0,0)
\put(80, 80){$\l$}
\put(80, 80){\vector(-1, -2){40}}
\put(32, -10){$\mu'$}

\put(42, 2){\vector(3, 1){50}}
\put(88, 7){$\nu$}

\put(80, 80){\vector(1, -1){30}}
\put(113, 40){$\mu''$}

\put(80, 80){\vector(-1, -4){10}}
\put(60, 30){$\mu$}

\put(70, 40){{\vector(1, -1){20}}}
\put(110, 50){{\vector(-2, -3){20}}}
\end{picture}
\end{center}
\caption{Example}
\label{fig1}
\end{figure}
 The radical filtration of $V$ has length $2$, with the non-empty $P(V)_i$ given by
\[
P(V)_0=\l, \quad P(V)_1=\{\mu, \mu', \mu'' \}, \quad P(V)_2=\nu,
\]
and the consecutive quotients of the radical filtration given by
\[
V_0=L(\l),  \quad V_1=L(\mu)\oplus L(\mu')\oplus L(\mu''), \quad V_2=L(\nu).
\]
The radical filtration is the unique Loewy filtration in this case, as one can immediately see by looking at the graph.
\end{example}

Now we can easily prove Theorem \ref{thm:rigid}.
\begin{proof}[Proof of Theorem \ref{thm:rigid}]
We can construct the radical filtration of $V(\l)$ by applying Lemma \ref{lem:graph} to its primitive weight graph,
which is one of the graphs obtained in Theorem \ref{verma-stru}.  By  inspecting the graph, we immediately see that
the radical filtration of $V(\l)$ is the unique Loewy filtration.
Note that the third graph in \eqref{Stru-Kac-mod} is a special case of Figure \ref{fig1},
which was already considered in detail in Example \ref{exmple-graph}.
\end{proof}

\subsection{Proof of  Theorem \ref{main-1}}\label{proof-main-1}
The claim of Theorem \ref{main-1} is trivially true if $V(\l)$ is simple, thus we assume that $V(\l)$ is reducible. We only need to show that the Jantzen filtration is Loewy since $V(\l)$ is rigid by Theorem \ref{thm:rigid}.
From Theorem \ref{verma-stru} we can see that $V(\l)$ is  multiplicity free, namely, all composition factors have multiplicity one.
It then follows from the proof of \cite[Theorem 3.6]{SZ3} that the Jantzen filtration has semi-simple consecutive quotients.

It remains to prove that the length $\ell$ (cf.~\eqref{FILT---}) of the Jantzen filtration is minimal, i.e., $\ell=1$ in all cases except
that $\ell=2$ if $\l$ is in the cases of the last two graphs of \eqref{Stru-Kac-mod}.

First we suppose that $\l$ is a $\fg$-integral and $\fg_0$-dominant atypical weight such that $\l=\l\wen{i},\l^i_\pm$ with $i\ge2$ or $\l=\l^1_\pm\in\atp^2$ (i.e., $\l$ is in the last two cases of \eqref{Stru-Kac-mod}). We assume $\l=\l^i$ as the proof for the case $\l=\l^i_\pm$ is similar.
We only need to prove that the socle $L(\l^{-i})$  of $V(\l^i)$ is contained in $V^2(\l)$.

Let $v'_{\l^{-i}}$ be a primitive vector with weight $\l^{-i}$. Then it is a nonzero highest weight vector of
$L(\l^{-i})\subset V(\l^i)$. Up to a nonzero scalar factor, $v'_{\l^{-i}}$ is equal to
$\prod_{\a\in\D_1^+}e_\a w_1$ with $w_1=f_\th^{\bar\l_0+1}v_\l$ as in the proof of Theorem \ref{verma-stru}.
The product of $e_\a$ can be ordered so that $\prod_{\a\in\D_1^+} e_\a=\prod_{\a\in\D_1^\pm}e_\a \prod_{\a\in\D_1^\ddag}e_\a$.  Then up to a nonzero scalar factor,
\begin{eqnarray}\label{highest--k-new-0}
v'_{\l^{-i}}&  =&
\prod_{\a\in\D_1^\pm}e_\a  \cdot  \prod_{\a\in\D_1^\pm}f_\a  \cdot \tilde{f} f_\th^{\bar\l_0+1-r_1}v_\l,
\end{eqnarray}
where $r_1=\#\D_1^\ddag$, and $\tilde f=f_\d$ if $\fg=G_3$ or $\tilde f=1$ else. This can be shown by noting that $[e_\a,f_\beta]=0$ for $\a\in\D_1^\ddag,\,\beta\in\D_1^\pm$,  and $[e_\a,f_\th]=f_{\bar \a}$ (up to a nonzero scalar factor) for $\a\in\D_1^\ddag$, where $\bar\a=w_0(\a)$ with $w_0\in W_0$ being the product of the all $\si_i$'s in \eqref{W0==}.  It is the unique root in $\D_1^\pm$ obtained from $\a$ by changing all signs of $\es_i$ with $i>0$ (cf.~\eqref{sys1}--\eqref{sys3}).

We use the same symbols $v'_{\l^{-i}}$ and $v_\l$ to denote the corresponding vectors in
the deformed parabolic Verma module $V_T(\l\wen{i})$.  Let
$D:=\langle v'_{\l^{-i}},v'_{\l^{-i}}\rangle$, which is a polynomial in $t$.
Since every nonzero submodule of $V(\l\wen{i})$ must contain the highest weight vector of the socle $L(\l^{-i})$  of  $V(\l\wen{i})$, we conclude that the lowest order term of the polynomial $D$ is of
a degree equal to the length $\ell$ of the Jantzen filtration,  that is, $D=t^{\ell} u(t)$ for some polynomial
$u(t)$ in $t$ with a nonzero constant term.

For convenience, we define
$\tilde e=e_\d$ if $\fg=G_3$ and else $\tilde e=1$.
Upon using \eqref{highest--k-new-0} (but interpreting $v'_{\l^{-i}}$ as in $V_T(\l\wen{i})$), we immediately see from \eqref{tau-prop} that
\begin{eqnarray}\label{highest--k-new-1+}
D v_\l & =  &e_\th^{\bar\l_0+1-r_1}\tilde e
\mbox{$\dis \prod_{\a\in\D_1^\pm}e_\a$}
\mbox{$\dis \prod_{\a\in\D_1^\pm}f_\a$}
v'_{\l^{-i}}.
\end{eqnarray}
Let $v''=\mbox{$ \prod_{\a\in\D_1^\pm}e_\a$}
\mbox{$ \prod_{\a\in\D_1^\pm}f_\a$}
v'_{\l^{-i}}$.
Observe from \eqref{highest--k-new-0} that $e_\a v'_{\l^{-i}}=0$ for all $\a\in\D_1^\pm$.  Similar computations as those in \eqref{WWWWW} show that up to a nonzero factor in $\C$,
\begin{eqnarray}\label{0hi--k-new-1+}
 v'' = p_1(t) v'_{\l^{-i}} \quad
\text{with \ \ } p_1(t):= \prod_{a\in\D_1^\pm} (\l^{-i}_{(t)}+\rho,\a),
\end{eqnarray}
where we have adopted the notation that
$\mu_{(t)}=\mu+t\d$  for any $\mu\in\fh^*$. Thus
\[
D v_\l =p_1(t)e_\th^{\bar\l_0+1-r_1}\tilde e v'_{\l^{-i}}.
\]

The proof of \eqref{0hi--k-new-1+} is done by case by case computations for
all the exceptional Lie superalgebras. Consider  as an example the case with $\fg=D(2,1;a)$.
Then up to nonzero  factors in $\C$,
\[
\begin{array}{rllll}
v''&=e_{\d-\es_1-\es_2}e_{\d-\es_1+\es_2}f_{\d-\es_1+\es_2}f_{\d-\es_1-\es_2}v'_{\l^{-i}}\\[4pt]
&=
e_{\d-\es_1-\es_2}[e_{\d-\es_1+\es_2},f_{\d-\es_1+\es_2}]f_{\d-\es_1-\es_2}v'_{\l^{-i}}\\[4pt]
&=\big(\d-\es_1+\es_2,\l^{-i}_{(t)}-(\d-\es_1-\es_2)\big)e_{\d-\es_1-\es_2}f_{\d-\es_1-\es_2}v'_{\l^{-i}}\\[4pt]
&= (\d - \es_1 + \es_2,\l^{-i}_{(t)} + \rho)[e_{\d-\es_1-\es_2},f_{\d-\es_1-\es_2}]v'_{\l^{-i}} \\[4pt]
&= (\d - \es_1 + \es_2,\l^{-i}_{(t)} + \rho)(\d - \es_1 - \es_2,\l^{-i}_{(t)})v'_{\l^{-i}}\\[4pt]
&= (\d - \es_1 + \es_2,\l^{-i}_{(t)} + \rho)(\d - \es_1 - \es_2,\l^{-i}_{(t)} + \rho)v'_{\l^{-i}}\\
&=\dis  \prod_{\a\in\D_1^\pm}(\a,\l^{-i}_{(t)}+\rho)v'_{\l^{-i}} = p_1(t) v'_{\l^{-i}},
\end{array}
\]
where the third and fifth equalities follow respectively from that
$(\d-\es_1+\es_2,-\d+\es_1+\es_2)=(\d-\es_1+\es_2,\rho)$ and $(\d-\es_1-\es_2,\rho)=0$ (cf.~\eqref{rho====}).

Now we return to an arbitrary exceptional Lie superalgebra.
Since $\l^{-i}=\l^{\si_0}$, we have $\l^{-i}_{(t)}+\rho=\si_0(\l^i+\rho)+t\d=\si_0(\l^i+\rho-t\d)$.  Hence
\begin{eqnarray}\label{P1ttt}
p_1(t) &\dis= \prod_{a\in\D_1^\pm}\big(\si_0(\l^{i}+\rho-t\d),\a\big)&=t p_{12}(t),
\end{eqnarray}
with $p_{12}(t)$ being a polynomial in $t$ with a nonzero constant term, where the last equality follows by noting 
the following facts. For $\a\in\D_1^\pm$,
$(\si_0(\l^{i}+\rho-t\d),\a\big)=(\l^{i}+\rho-t\d,\si_0(\a)\big)=(\l^{i}+\rho-t\d,-\bar\a\big)$, where $\bar\a$ is defined immediately after \eqref{highest--k-new-0}.  There exists a unique atypical root $\g$ of $\l$ such that $\g\in\D_1^\ddag$ since $\l$ is non-tail (cf.~\eqref{Tail-aty} and statements after it).

Next let us compute $e_\th^{\bar\l_0+1-r_1}\tilde e v'_{\l^{-i}}$.
Up to a nonzero factor in $\C$,
\begin{equation}\label{p2tt}
e_\th^{\bar\l_0+1-r_1}\tilde e v'_{\l^{-i}}= \tilde e
\prod_{\a\in\D_1^\pm}e_\a  e_\th^{\bar\l_0+1-r_1}f_\th^{\bar\l_0+1-r_1}w,
\end{equation}
where $w:=\prod_{\a\in\D_1^\pm}f_\a\tilde fv_\l$, with ``weight" $\mu_{(t)}:=\l^i_{(t)}-r_1\varsigma\d-2\d_{\fg,G_3}\d$ (where $\varsigma=\frac12$ if $\fg= F_4$ or $1$ else, $\d_{\fg,G_3}=1$ if $\fg=G_3$ or 0 else). Note that
$e_\th w=0$. Thus the factor $e_\th^{\bar\l_0+1-r_1}f_\th^{\bar\l_0+1-r_1}$ on the right-hand side of \eqref{p2tt} can be easily eliminated, leading to (up to a nonzero factor in $\C$)
 \begin{eqnarray}\label{highest--k-new-5}
t(\l_0+1-r_1)! \prod_{k=1}^{\l_0-r_1}(t+k) w',
\end{eqnarray}
where $w' =\mbox{$ \prod_{\a\in\D_1^\pm}e_\a$}\tilde e
\mbox{$\tilde f \prod_{\a\in\D_1^\pm}$} f_\a  v_\l$.
Calculations similar to those leading to \eqref{0hi--k-new-1+} and \eqref{WWWWW}
reduce $w'$ to
 \begin{eqnarray}\label{highest--k-new-6}
        w'=p_{22}(t)v_\l   \quad \text{with \ }p_{22}(t)=p_{21}(t) \prod_{\a\in\Psi}(\a,\l_t+\rho),
\end{eqnarray}
where $p_{21}(t)=1$ and $\Psi=\D_1^\pm$ if $\fg\ne G_3$, or else, $p_{21}(t)=\l_0+t-r_1$ (up to a nonzero factor in $\C$) and $\Psi=\D_1^\pm\bs\{\d\}$. For proving \eqref{highest--k-new-6}, one observes in the latter case that up to nonzero scalar factors, $\tilde e=e_\d$, $e_\d^2=e_{2\d}$, $f^2_{\d}=f_{2\d}$ and $[e_{2\d},f_{2\d}]=h_\th$; the factors $e_\d,\tilde e,\tilde f, f_\d$ in $w'$ are arranged to appear in this order. Note  that in the case $\fg=G_3$, $\atp^2=\emptyset$, and for $\l=\l^i\in\atp^1$ with $i\ge2$ we have $\l_0> 3=r_1$ by \eqref{l0-ige3}. Thus $p_{22}(t)$ is a polynomial with a nonzero constant term.

The long computation finally gives
\[
D=\langle v'_{\l^{-i}},v'_{\l^{-i}}\rangle = t^2 p_{12}(t) p_{22}(t) (\l_0+1-r_1)! \prod_{k=1}^{\l_0-r_1} (t+k),
\]
and $t^{-2} D$ is a polynomial with a nonzero constant term.
This shows that the Jantzen filtration in this case is of length $2$. By Lemma \ref{lem:graph}, this is the Loewy length. Hence the Jantzen filtration is a Loewy filtration, which is unique by Theorem \ref{thm:rigid}.

Next suppose $\l$ is in the first case of \eqref{Stru-Kac-mod}, i.e., $\l=\l^0\in\atp^2$.
Take $
v_+=\prod_{\a\in\D_1^\ddag}f_\a v_\l$  and  $v_-=\prod_{\a\in\D_1^\pm}f_\a v_\l.$
Then $\prod_{\a\in\D_1^\ddag}e_\a v_+=0=\prod_{\a\in\D_1^\pm}e_\a v_-$ as in the proof of \eqref{WWWWW}.
Arguments similar to those following \eqref{WWWWW} show that there exist some $u_\pm\in U(\fg_{-1})$ such that
$v'_{\l^{-1}_\pm}=u_\pm v_\l$ are primitive vectors with weights $\l^{-1}_\pm$ respectively.
Then the same methods used in the earlier part of this proof show that $v'_{\l^{-1}_\pm}\in V^1(\l)$, i.e.,  the Jantzen filtration has length 1, and hence is the unique Loewy filtration.

The other case can be proven in a similar way but much more simply as the primitive weight graph is simpler. We omit the details.

%
%
\subsection{Computation of $\fu^-$-homology groups}

In this section, we compute the homology groups $H_i(\fu^-, L(\lambda))$. The results will be needed for proving Theorem \ref{main-2}.  We remark that the results are interesting in their own right.
\begin{theorem}\label{main-3}
Let $\l$ be an atypical $\fg$-integral weight.
\begin{enumerate}
\item If $\l=\l\wen{i}\in\atp^1$ for some $i\in\Z^*$,
then as $\fg_0$-modules,
\begin{equation}\label{Main-ho}
H_k(\fu^-,L(\l\wen{i}))\cong\left\{\begin{array}{lll}
L^0(\l\wen{i-k})&\mbox{if \ }k=0\mbox{ or }i\le-1,     \\[4pt]
L^0(\l\wen{-k-1})&\mbox{if \ }k\ge i=1,\\[4pt]
L^0(\l\wen{-i-k})\oplus L^0(\l\wen{-k+i-2})&\mbox{if \ }k\ge i\ge2,\\[4pt]
L^0(\l\wen{-i-k})\oplus L^0(\l\wen{1})\oplus L^0(\l\wen{-1})&\mbox{if \ }k=i-1\ge1,\\[4pt]
L^0(\l\wen{-i-k})\oplus L^0(\l\wen{i-k})&\mbox{if \ }1\le k\le i-2.\end{array}\right.
\end{equation}
\item
If $\l=\l^i_+\in\atp^2$ for some $i\in\Z$,
then as $\fg_0$-modules,
\begin{equation}\label{Main-ho-2}
H_k(\fu^-,L(\l^{i}_+))\cong\left\{\begin{array}{lll}
L^0(\l^{i-k}_+)&\mbox{if \ }k=0\mbox{ or }i\le-1,\\[4pt]
L^0(\l^{-k}_+)\oplus L^0(\l^{-k}_-)&\mbox{if \ }k> i=0,\\[4pt]
L^0(\l^{i-k}_-)\oplus L^0(\l^{-i-k}_+)&\mbox{if \ }k\ge i\ge1,\\[4pt]
L^0(\l^{i-k}_+)\oplus L^0(\l^{-i-k}_+)&\mbox{if \ }1\le k<i
.\end{array}\right.
\end{equation}
\end{enumerate}
\end{theorem}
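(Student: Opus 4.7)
The plan is to compute $H_\ast(\fu^-, L(\l))$ by resolving $L(\l)$ through parabolic Verma modules. The key preliminary fact is that each parabolic Verma module is acyclic for $\fu^-$-homology: by PBW, $V(\mu) \cong U(\fu^-)\otimes_{\C} L^0(\mu)$ is free as a $U(\fu^-)$-module, so the Koszul complex collapses to give $H_k(\fu^-, V(\mu)) = \delta_{k,0}\,L^0(\mu)$. Thus, if I can build an exact sequence $\cdots \to W_1 \to W_0 \to L(\l) \to 0$ with each $W_k$ a finite direct sum of parabolic Verma modules, then $H_k(\fu^-, L(\l))$ is the $k$-th homology of the induced complex $\cdots \to H_0(\fu^-, W_1) \to H_0(\fu^-, W_0) \to 0$. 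All simple summands $L^0(\mu)$ that appear will correspond to pairwise distinct highest weights, so any $\fg_0$-homomorphism between two of them must vanish, forcing every differential to be zero; hence $H_k(\fu^-, L(\l)) = H_0(\fu^-, W_k)$, and the problem reduces to producing the right resolutions.

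For the tail cases ($\l = \l^i \in \atp^1$ with $i \le -1$, or $\l = \l^i_\pm \in \atp^2$ with $i \le -1$), Theorem \ref{verma-stru}(2) says the primitive weight graph is the single edge $\l \to \l\pre$, giving the short exact sequence $0 \to L(\l\pre) \to V(\l) \to L(\l) \to 0$. Iterating and splicing yields the linear chain resolution
\[
\cdots \to V(\l^{i-2}) \to V(\l^{i-1}) \to V(\l^i) \to L(\l^i) \to 0,
\]
and analogously in $\atp^2$; this directly produces $H_k(\fu^-, L(\l^i)) = L^0(\l^{i-k})$. For $\l = \l^1 \in \atp^1$ one splices $0 \to L(\l^{-2}) \to V(\l^1) \to L(\l^1) \to 0$ against the tail resolution of $L(\l^{-2})$, recovering the $i=1$ case. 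For $\l = \l^0 \in \atp^2$ the first graph of \eqref{Stru-Kac-mod} predicts $W_1 = V(\l^{-1}_+) \oplus V(\l^{-1}_-)$, after which the two branches each continue via their own tail resolution. For the genuinely non-tail cases ($\l = \l^i \in \atp^1$ with $i \ge 2$ or $\l = \l^i_\pm \in \atp^2$ with $i \ge 1$), the graphs in \eqref{Stru-Kac-mod} predict that each $W_k$ is a sum of two Verma modules, with a three-summand ``branching'' at a single intermediate level (at $k = i-1$ in the $\atp^1$ case) coming from the shape of the third graph.

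The main obstacle is constructing and verifying exactness of these non-tail resolutions. For each edge $\nu \drar \mu$ in the primitive weight graph of a parabolic Verma module, one needs a nonzero homomorphism $V(\mu) \to V(\nu)$; its existence follows from the $\fg_0$-highest weight vector of weight $\mu$ inside $V(\nu)$ produced in the proof of Theorem \ref{verma-stru}, and its uniqueness up to scalar from the multiplicity-free property established in the same proof. These maps can then be assembled into complexes whose exactness is checked inductively: the short exact sequences read off the graphs in \eqref{Stru-Kac-mod} are spliced against the already-built tail resolutions via the horseshoe lemma, and at the branching level one verifies that the kernel of the preceding differential is generated by highest weight vectors of exactly the three predicted weights, with the multiplicity-free property ruling out spurious contributions. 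As a consistency check, the alternating sum $\sum_k (-1)^k \ch W_k$ must equal $\ch L(\l)$ as given by the composition factor data of Theorem \ref{verma-stru}. Applying $H_0(\fu^-,-)$ and invoking the vanishing of differentials noted above then yields formulas \eqref{Main-ho} and \eqref{Main-ho-2} term by term.
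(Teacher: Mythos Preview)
Your approach is sound for the tail weights, for $\l=\l^1\in\atp^1$, and for $\l=\l^0\in\atp^2$: there the radical of $V(\l)$ is generated by genuine $\fg$-highest weight vectors, so the required Verma maps exist and splice into an exact resolution as you describe.

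The gap is in the remaining non-tail cases $\l=\l^i\in\atp^1$ with $i\ge2$ (and analogously $\l^i_\pm\in\atp^2$ with $i\ge1$). Your $W_1$ must contain $V(\l^{-i-1})$ as a summand to account for $L^0(\l^{-i-1})$ in $H_1$, but $\Hom_\fg\big(V(\l^{-i-1}),V(\l^i)\big)=0$. The primitive vector $v'_{\l^{-i-1}}\in V(\l^i)$ is only a $\fg_0$-highest weight vector, not a $\fg$-highest weight vector: Claim~3 in the proof of Theorem~\ref{verma-stru} exhibits $v'_{\l^{-i}}=u_2\,v'_{\l^{-i-1}}$ with $u_2\in U(\fg_1)\subset U(\fu)$, so $\fu\,v'_{\l^{-i-1}}\ne0$, and since $b_{\l^i,\l^{-i-1}}=1$ there is no other candidate. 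Hence no $\fg$-map $W_1\to V(\l^i)$ can cover the factor $L(\l^{-i-1})$ of the radical, the complex fails to be exact at $W_0$, and the horseshoe lemma does not help because parabolic Verma modules are not projective in $\cO^\fp$. The paper sidesteps this by abandoning a global resolution: it introduces the subquotients $M_j$ of $V(\l^i)$ with graph $\l^j\to\l^{-j-1}$, uses $0\to M_{i-1}\to V(\l^i)\to M_i\to0$ and acyclicity of $V(\l^i)$ to get $H_k(M_i)\cong H_{k-1}(M_{i-1})$ recursively, and then reads off $H_k(L(\l^i))$ from $0\to L(\l^{-i-1})\to M_i\to L(\l^i)\to0$ after checking that the connecting map $H_k(L(\l^{-i-1}))\to H_k(M_i)$ vanishes because $L^0(\l^{-i-1-k})$ is absent from $H_k(M_i)$. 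Your ``distinct weights force zero maps'' idea is precisely what is used, but on connecting homomorphisms in long exact sequences rather than on differentials of a resolution that does not exist.
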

\begin{proof} We will prove (1) only as (2) can be proven similarly.
For any $\fu^-$-module $V$,  we denote
$H_k(V):=H_k(\fu^-,V)$ for simplicity.
Consider $V(\l^{-i})$ for $i\ge1$. Part (\ref{part2}) of Theorem \ref{verma-stru} gives the short exact sequence
\[
0\to L(\l\wen{-i-1})\to V(\l\wen{-i})\to L(\l\wen{-i})\to0,
\]
from which arises the following long exact sequence of homology groups:
\begin{eqnarray}\label{long-exact1}
                    \cdots \to H_k(L(\l\wen{-i-1}))\to&     H_k(V(\l\wen{-i}))    &\to H_k(L(\l\wen{-i}))\to\nonumber\\
                        \to  H_{k-1}(L(\l\wen{-i-1})\to &    H_{k-1}(V(\l\wen{-i}))    &\to H_{k-1}(L(\l\wen{-i}))\to\cdots.
\end{eqnarray}
Since the parabolic Verma module $V(\l)$ for any $\l$ is a free $\fu^-$-module, we always have (hereafter the ``equality'' always means the ``$\fg_0$-module isomorphism'') \begin{equation}\label{gen-Verma}H_k(V(\l))=\left\{
\begin{array}{ll}
L^0(\l)&\mbox{if }k=0,\\[4pt]
0&\mbox{otherwise},\end{array}
\right.\end{equation}
where $H_0(V(\l))$ is obtained from its definition. In fact,
$$H_0(V(\l))=V(\l)/\fu^-V(\l)=L^0(\l)=L(\l)/\fu^-L(\l)=H_0(L(\l)).$$
Thus \eqref{long-exact1} gives
\begin{eqnarray}\label{resu1}
H_k(L(\l\wen{-i}))=L^0(\l\wen{-i-k})\mbox{ \ for \ }k\ge0, i\ge1.
\end{eqnarray}
Similarly, from the short exact sequence $0\to L(\l\wen{-2})\to V(\l\wen{1})\to L(\l\wen{1})\to 0$ (cf.~the second graph of \eqref{Stru-Kac-mod}), we obtain
\begin{eqnarray}\label{resu2}
H_k(L(\l\wen{1}))=\left\{\begin{array}{llcc}
L^0(\l\wen{1})&\mbox{if \ }k=0,\\[4pt]
L^0(\l\wen{-k-1})&\mbox{otherwise.}
\end{array}\right.
\end{eqnarray}

Now consider $V(\l^i)$ with $i\ge2$.
We let $M_i,\,M_1$ be respectively the $\fg$-modules with primitive weight graphs
%
%
\begin{equation}\label{M-0000}
M_i:\ \ \l\wen{-i-1}\leftarrow \l\wen{i} \quad \text{and} \quad M_1:\ \ \l\wen{-1}\to\l\wen{-2} \leftarrow\l\wen{1}.
\end{equation}
 Then the third and fourth graphs of \eqref{Stru-Kac-mod} show that we have the short exact sequence \begin{equation}\label{Shor-222}0\to M_{i-1}\to V(\l\wen{i})\to M_i\to0.\end{equation}
Note that the subgraph of  $M_1$ obtained by deleting $\l\wen{-1}$ is the primitive weight graph for the parabolic Verma module $V(\l\wen{1})$ (cf.~the second graph of \eqref{Stru-Kac-mod}). Therefore, we have the short exact sequence
\[
0\to V(\l\wen{1})\to M_1\to L(\l\wen{-1})\to0,
\]
which gives rise to a long exact sequence of homology groups.
Since the homology groups of both $L(\l\wen{-1})$ and $V(\l\wen{1})$ are all known by \eqref{resu2} and \eqref{gen-Verma},  and in particular,$H_k(V(\l\wen{1}))=0$ for all $k>0$, this long exact sequence determines
\begin{eqnarray}\label{resu3}
H_k(M_1)=\left\{\begin{array}{llcc}
L^0(\l\wen{1})\oplus L^0(\l\wen{-1})&\mbox{if \ }k=0,\\[4pt]
L^0(\l\wen{-k-1})&\mbox{otherwise.}
\end{array}\right.
\end{eqnarray}
Analogously, from \eqref{Shor-222} and \eqref{gen-Verma}, we obtain for $i\ge2$,
\begin{eqnarray}\label{resu4}
H_k(M_i)=H_{k-1}(M_{i-1})=\left\{\begin{array}{llcc}
L^0(\l\wen{-k+i-2})&\mbox{if \ }k\ge i\ge2,\\[4pt]
L^0(\l\wen{1})\oplus L^0(\l\wen{-1})&\mbox{if \ }k=i-1\ge1,\\[4pt]
L^0(\l\wen{i-k})&\mbox{if \ }0\le k\le i-2.
\end{array}\right.
\end{eqnarray}
The primitive weight graph of $M_i$ for $i\ge2$ in \eqref{M-0000} yields the following short exact sequence: $0\to L(\l\wen{-i-1})\to M_i\to L(\l\wen{i})\to0$.
Thus we have the long exact sequence
\begin{eqnarray}\label{long-exact2}
                    \cdots \to H_k(L(\l\wen{-i-1}))\stackrel{\psi_k}{-\!\!\!-\!\!\!-\!\!\!\to}&     H_k(M_i)    &\to H_k(L(\l\wen{i}))\to\nonumber\\
                        \to  H_{k-1}(L(\l\wen{-i-1})\stackrel{\psi_{k-1}}{-\!\!\!-\!\!\!-\!\!\!\to} &    H_{k-1}(M_i)    &\to H_{k-1}(L(\l\wen{i}))\to\cdots.\end{eqnarray}
Note that all maps in \eqref{long-exact2} are $\fg_0$-module homomorphisms.
By inspecting \eqref{resu4}, we immediately see that as a $\fg_0$-module, $H_k(M_i)$ does not have a composition factor $L^0(\l\wen{-i-k-1})$.  Since
$H_k(L(\l\wen{-i-1}))=L^0(\l\wen{-i-k-1})$ by \eqref{resu1},
the $\fg_0$-module homomorphism $\psi_k$ must be zero. Hence
\begin{eqnarray}\label{resu5}
                H_k(L(\l\wen{i}))&   =   &H_{k-1}(L(\l\wen{-i-1}))\oplus H_{k}(M_{i})\nonumber\\
&   =   &\left\{\begin{array}{llcc}
L^0(\l\wen{-i-k})\oplus L^0(\l\wen{-k+i-2})&\mbox{if \ }k\ge i\ge2,\\[4pt]
L^0(\l\wen{-i-k})\oplus L^0(\l\wen{1})\oplus L^0(\l\wen{-1})&\mbox{if \ }k=i-1\ge1,\\[4pt]
L^0(\l\wen{-i-k})\oplus L^0(\l\wen{i-k})&\mbox{if \ }1\le k\le i-2\\[4pt]
L^0(\l\wen{i})&\mbox{if \ }0=k\le i-2.
\end{array}\right.
\end{eqnarray}
From this  together with \eqref{resu1} and \eqref{resu2}, we obtain \eqref{Main-ho}.
\end{proof}

\subsection{Proof of Theorem \ref{main-2}}\label{sect:proof-JKL}
%
%

Theorem \ref{main-2} is equivalent to
\begin{equation}\label{KL-1}\mbox{$\dis \sum_{\mu\in P_0^+}$}
J_{\l\mu}(q)p_{\mu\nu}(q)=\d_{\l\nu}\mbox{ \ for all }\l,\nu\in P_0^+.\end{equation}
Since the consecutive quotients $V(\l)_i$ of the Jantzen filtration are semisimple,
\[
\sum_\mu[V(\l)_i:L(\mu)][H_j(L(\mu)):L^0(\nu)]=[H_j(V(\l)_i):L^0(\nu)].
\]
Thus, the left-hand side of \eqref{KL-1} can be expressed as
$$\begin{array}{ll}
\mbox{$\dis \sum_{\mu,i,j}q^{i+j}(-1)^j[V(\l)_i:L(\mu)][H_j(L(\mu)):L^0(\nu)]$}\\[7pt]
\mbox{$\dis= \sum_{k}q^k\sum_{j=0}^k(-1)^j[H_j(V(\l)_{k-j}):L^0(\nu)]$}.\end{array}$$
Note that the constant term of the right-hand side of this equation is obviously equal to $\d_{\l\nu}$. Thus
the proof of \eqref{KL-1} is equivalent to showing
\begin{equation}\label{KL-2}
\sum_{j=0}^k(-1)^j H_j(V(\l)_{k-j})=0\mbox{ for }k\ge1,
\end{equation}
where the left-hand side is interpreted as an element in the Grothendieck group of the category $\cO_{\fg_0}$ of $U(\fg_0)$-modules.

We shall only consider in detail the case of an atypical $\l\in P^+$ such that either $\l=\l\wen{i}\in \atp^1$ for some $i\ge2$ or $\l=\l^i_\pm\in\atp^2$ with $i\ge1$, as  in all the other cases the parabolic Verma module $V(\lambda)$ has much simpler structure by Theorem \ref{verma-stru},  and the proof of \eqref{KL-2} is considerably easier.  Now assume $\l=\l^i\in\atp^1$ with $i\ge2$. Since the Jantzen filtration \eqref{filt} coincides with the radical filtration \eqref{FILT---}, we obtain from \eqref{Stru-Kac-mod} that
\[
V(\l\wen{i})_0 = L(\l\wen{i}),\ V(\l\wen{i})_1 = L(\l\wen{i-1}) \oplus  L(\l\wen{-i-1}) \oplus \d_{i2}L(\l\wen{-1}),\ V(\l\wen{i})_2 = L(\l\wen{-i}),
\]
and $V(\l\wen{i})_k=0$ for $k>2$.  Using the result \eqref{Main-ho} on $\fu^-$-homology groups,  one immediately obtains  \eqref{KL-2}.  The proof is similar if $\l=\l^i_\pm\in\atp^2$ with $i\ge1$.


\section{Characters, dimensions and cohomology groups of finite dimensional simple modules}

\subsection{Character and dimension formulae for simple modules}
Theorem \ref{verma-stru} enables us to derive a character formula and dimension formula for the
atypical finite-dimensional simple modules in a way analogous to the proofs of \cite[Theorem 4.4]{SZ4} and \cite[Theorem 4.16]{SZ2}. For an atypical weight $\l\in P^+$, we define
\equan{m-l-m} {S_\l = \{\l,\l^{{\si_0}}\} \cap \{\nu \in
P^+\mid  \nu \preccurlyeq \l\},\quad
m_\l = \#\big(\{\l,\l^{\si_0}\} \cap \{\nu \in
P^+\mid  \nu \succcurlyeq \l\}\big),} where $\l^{\si_0}$ is defined in \eqref{l-si-0}.
Then it is easy to see that
\[
S_\l=\left\{\begin{array}{l l}
\{\l,\l^{\si_0}\} & \  \text{if $\l\succ\l^{\si_0}\in P^+$}, \\
\{\l\}& \  \text{otherwise},
\end{array}\right.
\quad
m_{\l}= \left\{\begin{array}{l l}
2 &\ \text{if $\l\prec\l^{\si_0}$}, \\
1 & \  \text{otherwise}.
\end{array}\right.
\]
For $\mu\in S_\l$, we denote $\theta_{\l,\mu}\in W$ to be the
unique element with minimal length such that $\theta_{\l,\mu}\cdot\l=\mu$,
namely, $\theta_{\l,\mu}=1$ if $\l=\mu$ or $\theta_{\l,\mu}=\si_0$
otherwise.
Using the same method as that for the proof of \cite[Theorem 4.4]{SZ4}, we obtain
from Theorem \ref{verma-stru} the following result.

\begin{theorem}\label{main-theo1}Let $\fg$ be an exceptional Lie superalgebra, and
let $L(\l)$ be the finite-dimensional irreducible $\fg$-module with
atypical highest weight $\l$. 
\begin{enumerate}
\item The character of $L(\l)$ is given by
\equa{char-l-all}{\dis\ch L(\l)= \sum_{\mu\in
S_\l}{\dis\frac{(-1)^{|\theta_{\l,\mu}|}}{m_{\mu}R_{\bar0}}} \sum_{w\in
W}\sign{w}w\Big(e^{\mu+\rho_{\bar0}} \prod_{\b\in\D_1^+\bs\{\g_\mu\}}(1+e^{-\b})\Big),
}
where $R_{\bar0}$ is defined by \eqref{eq:RR},
$\g_\mu$ is the atypical root of $\mu\in S_\l$, and
$|\theta_{\l,\mu}|$ is the length of $\theta_{\l,\mu}$.
\item
The dimension of $L(\l)$ is given by
\equa{dim-l-all}{\dis{\rm dim\,} L(\l)= \sum_{\mu\in
S_\l,\,B\subset\D_1^+\bs\{\g_\mu\}}(-1)^{|\theta_{\l,\mu}|}m_{\mu}^{-1} \prod_{\a\in
\D_{\bar0}^+}\dis\frac{(\a,\rho_{\bar0}+\mu-\sum_{\b\in
B}\b)}{(\a,\rho_{\bar0})}.}
\item The character of the finite-dimensional $($typical
or atypical$)$ Kac $\fg$-module $K(\l)$ is $\ch K(\l)=\chi^V(\l)$ with
the right-hand side given by $(\ref{typical-char})$ unless
$\l^{\si_0}\in P^+$. If $\l^{\si_0}\in P^+$, then $\ch K(\l)=\ch L(\l)$ with
the right-hand side given by $(\ref{char-l-all})$.
\end{enumerate}
\end{theorem}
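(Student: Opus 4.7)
The plan is to derive all three formulae from the parabolic Verma module structure established in Theorem~\ref{verma-stru}, following the inversion-and-denominator-identity strategy of \cite[Theorem~4.4]{SZ4} and \cite[Theorem~4.16]{SZ2}. The overall idea for part~(1) is to express $\ch L(\lambda)$ as an alternating sum of $\ch V(\mu)$ by inverting the Jordan--H\"older relations, then to apply a combinatorial identity which converts the resulting $W_0$-alternating sum into a $W$-alternating sum with the atypical root factored out of the product.

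First I would fix an atypical $\lambda \in P^+$ and read off the composition factors of $V(\lambda)$ from the appropriate graph in \eqref{Stru-Kac-mod}, each of multiplicity one by Claim~\ref{claim1} in the proof of Theorem~\ref{verma-stru}. Proceeding by descending induction on the chain parameter $i$ with $\lambda = \lambda^i$ or $\lambda^i_\pm$, the triangular Jordan--H\"older system inverts to an alternating sum of the form $\sum_k(-1)^k \ch V(\mu_k)$, where the $\mu_k$ trace the downward chain of up/down moves starting from $\lambda$, together with a parallel sum over the chain starting from $\lambda^{\sigma_0}$ (which contributes only when $\lambda^{\sigma_0}\in P^+$). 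The case $\lambda = \lambda^0 \in \atp^2$ of the first graph in \eqref{Stru-Kac-mod} gives a one-step inversion rather than a telescoping sum; the other cases produce genuine telescoping sums that terminate when the chain exits $P^+$.

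Next I would substitute the parabolic character formula \eqref{char-Verma-for}. Since $\Delta_1^+$ is $W_0$-stable, the factor $R_1 e^{-\rho_1} = \prod_{\beta \in \Delta_1^+}(1+e^{-\beta})$ is $W_0$-invariant and can be pushed inside the $W_0$-sum. The key identity is that, along an up/down chain with atypical root $\gamma_\mu$, the factor $(1+e^{-\gamma_\mu})$ in the product is precisely what absorbs the translation $\mu \mapsto \mu \pm \gamma_\mu$; using $W = W_0 \times \langle \sigma_0\rangle$, the alternating sum of $W_0$-summands collapses into a single $W$-summand with the $\gamma_\mu$-factor removed from the product. This produces \eqref{char-l-all}, with $m_\mu^{-1}$ accounting for the $\sigma_0$-fixing in the case $\lambda = \lambda^0$ and the sign $(-1)^{|\theta_{\lambda,\mu}|}$ tracking whether $\mu$ equals $\lambda$ or $\lambda^{\sigma_0}$.

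Part~(2) then follows by expanding $\prod_{\beta \neq \gamma_\mu}(1+e^{-\beta})$ as a sum over subsets $B \subseteq \Delta_1^+\setminus\{\gamma_\mu\}$, so that each $W$-summand in \eqref{char-l-all} becomes a Weyl character of a finite-dimensional $\fg_0$-module of highest weight $\mu - \sum_{\beta \in B}\beta$; applying the Weyl dimension formula term by term gives \eqref{dim-l-all}. For part~(3) I would examine the submodule $U(\fg)f_\theta^{\bar\lambda_0+1}v_\lambda \subset V(\lambda)$ via Theorem~\ref{verma-stru}: when $\lambda^{\sigma_0}\in P^+$ it coincides with the maximal proper submodule, so $K(\lambda) = L(\lambda)$ and \eqref{char-l-all} applies; otherwise $\ch K(\lambda) = \ch V(\lambda) - \ch(\text{submodule})$ telescopes, via the same $W_0\to W$ identity used in part~(1), into the Weyl--Kac sum \eqref{typical-char}. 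The main obstacle is this combinatorial telescoping identity: verifying that the alternating sums of $W_0$-summands along an up/down chain fuse into a single $W$-summand with the atypical-root factor removed requires a careful case-by-case check for $\atp^1$ and $\atp^2$ over each of $D(2,1;a)$, $F_4$, $G_3$, with special attention to the compatibility of $\gamma_\mu \in \Delta_1^\ddag$ with the $\sigma_0$-action on up/down chains; once this identity is in place, the remaining arguments reduce to descending induction on the chain and specialisation at $e^\beta = 1$.
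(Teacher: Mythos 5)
Your proposal takes essentially the same approach as the paper, whose own proof is a one-line citation of the method of \cite[Theorem~4.4]{SZ4} and \cite[Theorem~4.16]{SZ2} applied to Theorem~\ref{verma-stru}. You correctly unpack that method---inverting the Jordan--H\"older data read off the primitive weight graphs of \eqref{Stru-Kac-mod}, pushing the $W_0$-invariant factor $R_1e^{-\rho_1}=\prod_{\b\in\D_1^+}(1+e^{-\b})$ inside the $W_0$-sum, fusing via $W=W_0\times\langle\si_0\rangle$ to strip out the atypical-root factor, then the subset expansion plus Weyl dimension formula for part~(2) and the Kac-module submodule analysis for part~(3)---and the telescoping identity you flag as the main obstacle is precisely the lemma from the cited references that the paper implicitly invokes.
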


\subsection{First and second cohomology groups}
Applying the methods used in the proofs of \cite[Theorem 5.1]{SZ4} and \cite[Theorems 1.1--1.3]{SZ1} to the present case,
we obtain the following result from the structure theorem  (Theorem \ref{verma-stru}) of parabolic Verma modules.
\begin{theorem}\label{homology}
Let $\fg$ be an exceptional Lie superalgebra. Let $L(\l)$ and $K(\l)$ respectively denote
the finite-dimensional irreducible and Kac modules with
highest weight $\l$. Let $\L^{i}\in\atp^1,\,i=-1$ or $i\ge1$ be defined by $\L^{-1}=0,\,\L^1=(\L^{-1})\nex,\,\L^i=(\L^{i-1})\nex$ for $i\ge2$. Then
\begin{eqnarray*}
H^1(\fg,L(\l))&\cong&\left\{\begin{array}{lll}
	\C&\mbox{if \  }\l=\L^{2},\\
	0&\mbox{otherwise}.
\end{array}\right.\\
H^1(\fg,K(\l))&\cong&\left\{\begin{array}{lll}
	\C&\mbox{if \  }\l=\L^{3},\\
	0&\mbox{otherwise}.\end{array}\right.\\
H^2(\fg,L(\l))&\cong&\left\{\begin{array}{lll}
	\C&\mbox{if \  }\l=\L^{1},\,\L^{3},\\
	0&\mbox{otherwise}.\end{array}\right.\\
H^2(\fg,K(\l))&\cong&\left\{\begin{array}{lll}
	\C&\mbox{if \  }\l=\L^{1},\,\L^{4}.\\
	0&\mbox{otherwise}.\end{array}\right.
\end{eqnarray*}
\end{theorem}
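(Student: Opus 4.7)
The plan is to follow the strategy of \cite[Theorem 5.1]{SZ4} and \cite[Theorems 1.1--1.3]{SZ1}, reducing the computation to a bookkeeping exercise on long exact sequences coming from Theorem \ref{verma-stru}. First I would perform a central character reduction: since the Casimir of $\fg$ acts on $L(\lambda)$ and $K(\lambda)$ by $\chi_\lambda(C)$ and on the trivial module $\C$ by $\chi_0(C)$, all groups $H^i(\fg, L(\lambda))$ and $H^i(\fg, K(\lambda))$ vanish when $\chi_\lambda \ne \chi_0$. The weights sharing the central character of $0 = \L^{-1}$ are, by Remark \ref{rem:central} and the combinatorics of Section \ref{Aty-weight}, precisely the $\L^j$ of $\atp^1$; weights in $\atp^2$ or typical weights lie in different blocks and contribute nothing. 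So the computation reduces to determining $H^i(\fg, L(\L^j))$ and $H^i(\fg, K(\L^j))$ for $i = 1, 2$ and all admissible $j$.

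The key structural input comes from Theorem \ref{verma-stru}: for each $\L^j$ the radical filtration of $V(\L^j)$ and the composition structure of $K(\L^j)$ are known explicitly. This yields short exact sequences
\begin{equation*}
0 \to N(\L^j) \to V(\L^j) \to L(\L^j) \to 0, \qquad 0 \to U(\fg) f_\theta^{\bar\lambda_0+1} v_{\L^j} \to V(\L^j) \to K(\L^j) \to 0,
\end{equation*}
with $N(\L^j)$ and $K(\L^j)$ having composition factors that can be read off from the graphs in \eqref{Stru-Kac-mod}. Applying $\Ext^*_\fg(\C, -) = H^*(\fg, -)$ produces long exact sequences that interlock $H^i(\fg, L(\L^j))$ and $H^i(\fg, K(\L^j))$ with $H^i(\fg, L(\L^k))$ for nearby $k$ and with $H^i(\fg, V(\L^j))$. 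The last group is controlled by combining the Hochschild-Serre spectral sequence for $\fp \subset \fg$ with the $\fu^-$-homology results of Theorem \ref{main-3}; this is precisely the step where Theorem \ref{main-3} pays off.

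With these ingredients the computation becomes an induction on $|j|$. I would start from the trivial module $\C = L(\L^{-1})$, where $H^*(\fg, \C)$ is standard, and propagate outward along the chain of up-moves defining the $\L^j$. At each step the connecting homomorphism in the long exact sequence either vanishes (splitting the cohomology) or is non-zero (shifting a copy of $\C$ by one degree); the existence of the specific classes $H^1(\fg, L(\L^2))$, $H^1(\fg, K(\L^3))$, $H^2(\fg, L(\L^1))$, $H^2(\fg, L(\L^3))$, $H^2(\fg, K(\L^1))$, $H^2(\fg, K(\L^4))$ corresponds exactly to the positions at which $\L^{-1} = 0$ appears as a composition factor in the radical or socle of the appropriate module, while the truncation defining $K(\L^j)$ shifts the relevant indices by one relative to the $L(\L^j)$ case.

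The main obstacle is not the overall setup but the precise identification of the connecting homomorphisms in the long exact sequences, especially distinguishing a vanishing connecting map from a non-vanishing one when several composition factors could formally contribute. In low degree this is handled by carrying the explicit primitive vectors constructed in the proof of Theorem \ref{verma-stru} through the Chevalley-Eilenberg complex, rather than relying on composition multiplicities alone. Once this is done, the list of $\L^j$ in the statement is exactly what the combinatorics of up-moves predict: each passage from $V(\L^j)$ to $L(\L^j)$ or to $K(\L^j)$ shifts the index of the $0 = \L^{-1}$ contribution by one, and the process terminates after at most two steps in degrees $1$ and $2$.
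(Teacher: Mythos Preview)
Your proposal is correct and follows essentially the same approach as the paper: the paper's proof consists solely of the sentence ``Applying the methods used in the proofs of \cite[Theorem 5.1]{SZ4} and \cite[Theorems 1.1--1.3]{SZ1} to the present case, we obtain the following result from the structure theorem (Theorem \ref{verma-stru}) of parabolic Verma modules,'' and you have spelled out precisely those methods---central character reduction to the $\L^j$ block, the short/long exact sequences coming from the primitive weight graphs of Theorem \ref{verma-stru}, and the inductive propagation along the chain of up-moves. One minor remark: invoking Hochschild--Serre for $\fp\subset\fg$ and Theorem \ref{main-3} to control $H^i(\fg,V(\L^j))$ is slightly roundabout (the cleaner route, as in \cite{SZ1,SZ4}, is Shapiro-type reciprocity for the induced module $V(\L^j)$, reducing to cohomology of $\fp$ with coefficients in a finite-dimensional $\fg_0$-module), but this does not affect the correctness of your outline.
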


\section{Comments on Jantzen filtration for orthosymplectic Lie superalgebras}
In this final section, we briefly comment on Jantzen filtration of parabolic Verma modules over
the remaining basic classical simple Lie superalgebras, the orthosymplectic Lie superalgebras ${\mathfrak{osp}}_{m|2n}$
with $m\ne 2$ (${\mathfrak{osp}}_{2|2n}$ is type I). Again we fix the distinguished maximal parabolic subalgebra $\fp$ of ${\mathfrak{osp}}_{m|2n}$ and consider the parabolic category $\cO^\fp$ of $\Z_2$-graded ${\mathfrak{osp}}_{m|2n}$-modules.

For $n=1$,  we can easily establish properties analogous to
Theorems \ref{main-1} and \ref{main-2}  by using results in \cite{SZ4}.  This was already alluded to in \cite{SZ3}.

\begin{theorem}\label{main-1+2}
The Jantzen filtration of the parabolic Verma module $V(\lambda)$ over ${\mathfrak{osp}}_{m|2}$
is the unique Loewy
filtration.
Furthermore, for any $\lambda, \mu\in P_0^+$,
the Jantzen polynomials $J_{\lambda \mu}(q)$ defined in \eqref{J-poly} coincide with the
inverse Kazhdan-Lusztig polynomials $a_{\lambda \mu}(q)$.
\end{theorem}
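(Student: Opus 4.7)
The plan is to mimic the strategy developed in Sections \ref{sect:rigid}--\ref{sect:proof-JKL} of this paper for the exceptional cases, substituting the structure theorem of \cite{SZ4} for Theorem \ref{verma-stru}. First I would recall from \cite{SZ4} the explicit description of the primitive weight graphs of parabolic Verma modules $V(\l)$ in $\cO^\fp$ for $\mathfrak{osp}_{m|2}$. Inspecting these graphs, one checks that every $V(\l)$ is multiplicity free and that its radical filtration, read off via Lemma \ref{lem:graph}, is the unique Loewy filtration. This immediately gives the $\mathfrak{osp}_{m|2}$ analogue of Theorem \ref{thm:rigid}, so it only remains to identify the Jantzen filtration with this Loewy filtration.

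Next I would establish that the Jantzen filtration of $V(\l)$ is Loewy. Because $V(\l)$ is multiplicity free, the argument of \cite[Theorem 3.6]{SZ3} applies verbatim to show that its consecutive quotients $V_i(\l)$ are semisimple. What remains is to pin down the length $\ell$ of the Jantzen filtration, and for this I would carry out the deformed contravariant-form computation used in Section \ref{proof-main-1}: construct explicit primitive vectors $v'_{\mu}$ for each socle layer $\mu$ of $V(\l)$ of the form $u\,f_\th^{\bar\l_0+1}v_\l$ (or $u\,v_\l$ in the "purely odd" cases) with $u\in U(\fg_1)\cup U(\fg_{-1})$, and evaluate $\langle v'_\mu, v'_\mu\rangle\in T$ by pushing the appropriate products of root vectors past one another and reducing them to Shapovalov-type scalars $\prod_{\a}(\l_{(t)}+\rho,\a)$. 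The order of vanishing at $t=0$ of each such polynomial equals the depth of $\mu$ in the primitive weight graph, matching the radical layer, which forces the Jantzen filtration to coincide with the (unique) Loewy filtration. The main technical obstacle here is the longer list of cases in $\mathfrak{osp}_{m|2}$, since the $\fu^\pm$ structure is richer than in the exceptional setting, but all the Casimir/Shapovalov cancellations used in \eqref{0hi--k-new-1+}--\eqref{highest--k-new-6} go through with only bookkeeping changes.

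For the Kazhdan-Lusztig identity $J_{\l\mu}(q)=a_{\l\mu}(q)$, I would follow Section \ref{sect:proof-JKL} and reduce to verifying
\[
\sum_{j=0}^k(-1)^j H_j(V(\l)_{k-j})=0 \quad \text{for all } k\ge1
\]
in the Grothendieck group of $\cO_{\fg_0}$. This needs an $\mathfrak{osp}_{m|2}$ version of Theorem \ref{main-3}: I would compute $H_k(\fu^-, L(\mu))$ inductively by peeling off the two-term short exact sequences appearing in the primitive weight graphs of \cite{SZ4}, exactly as in \eqref{long-exact1}--\eqref{resu5}, and using that parabolic Verma modules are free over $U(\fu^-)$. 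The key point, as in the exceptional case, is that the candidate connecting homomorphisms in the long exact sequences must vanish for $\fg_0$-weight reasons, which is verified by comparing $\fg_0$-composition factors across the two modules in each short exact sequence.

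The step I expect to be the main obstacle is the length-of-Jantzen-filtration computation, since the radical filtrations described in \cite{SZ4} for $\mathfrak{osp}_{m|2}$ include some graphs of depth larger than $2$, so one has to produce primitive vectors at each layer and show that the order of vanishing of $\langle v'_\mu, v'_\mu\rangle$ grows by exactly one per layer; the homology computation and the KL identity are then a fairly mechanical consequence, as in Sections \ref{sect:proof-JKL} and the proof of Theorem \ref{main-3}.
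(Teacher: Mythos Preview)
Your proposal is correct and follows essentially the same route as the paper: invoke \cite[Theorem~4.2]{SZ4} as the structural input replacing Theorem~\ref{verma-stru}, then rerun the arguments of Section~\ref{filtration-sec1} (rigidity via Lemma~\ref{lem:graph}, semisimplicity of layers via \cite[Theorem~3.6]{SZ3}, length via the deformed-form computation, and the homology/KL verification of Section~\ref{sect:proof-JKL}). The paper's own proof is precisely this, stated in two sentences.

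One small correction to your anticipated obstacle: the primitive weight graphs in \cite[Theorem~4.2]{SZ4} are, as the paper emphasises, \emph{precise analogues} of those in Theorem~\ref{verma-stru}---in particular they have Loewy length at most~$2$, just as in \eqref{Stru-Kac-mod}. So you will not encounter graphs of depth larger than~$2$, and the socle computation in the style of \eqref{highest--k-new-0}--\eqref{highest--k-new-6} remains a single case (length~$2$) rather than an inductive ladder. The longer case analysis you expect does not materialise; the bookkeeping is essentially identical to the exceptional case.
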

\begin{proof}
Theorem 4.2 in \cite{SZ4} is the precise analogue of Theorem \ref{verma-stru} for ${\mathfrak{osp}}_{k|2}$ with a slight change of notation.  Since the arguments in \S\ref{filtration-sec1} depend only on the primitive weight graphs in Theorem \ref{verma-stru}, they all go through in the present case, leading to the theorem. We omit the details.
\end{proof}

Finally for $\mathfrak{osp}_{m|2n}$ with $m\ne 2$ and $n>1$, super duality \cite{CLW} for
orthosymplectic Lie superalgebras will enable one to relate aspects of the Jantzen filtration for
parabolic Verma modules over $\mathfrak{osp}_{m|2n}$ to those of
the Jantzen filtration for parabolic Verma modules over orthogonal Lie algebras.
Even though the Jantzen filtration for the latter is not well understood except for
the cases corresponding to Hermitian symmetric pairs \cite{CIS, BB},
this nevertheless leads to useful insights into the problem at hand.
We will treat the Jantzen filtration
for the orthosymplectic Lie superalgebras in a future publication.

\medskip
\medskip

\noindent{\bf Acknowledgement}: This work was supported by the Australian Research
Council (grant no. DP0986551), the National Science Foundation of China (grant
no. 10825101), the Shanghai Municipal Science and Technology Commission (grant no.~12XD1405000) and
the Fundamental Research Funds for the Central Universities of China.

\end{document}